\theoremstyle{remark}
\newtheorem{theorem}{Theorem}[section]
\newtheorem{lemma}{Lemma}[section]
\newtheorem{prop}{Proposition}[section]
\newtheorem{example}{Example}[section]
\newtheorem{rmrk}{Remark}[section]
\theoremstyle{definition}
\newtheorem{definition}{Definition}
\newcommand{\hadaprod}{\circ}
\newcommand{\regprod}{*}
\newcommand{\superclass}[1]{\left[#1\right]}
\newcommand{\aut}{\operatorname{Aut}}
\newcommand{\irr}{\operatorname{Irr}}
\newcommand{\kleingenone}{a}
\newcommand{\kleingentwo}{b}
\newcommand{\kleingenthree}{c}
\newcommand{\cpgen}{\omega}
\newcommand{\kleingenonechar}{\psi_a}
\newcommand{\kleingentwochar}{\psi_b}
\newcommand{\kleingenthreechar}{\psi_c}
\newcommand{\kleingenxchar}{\psi_x}
\newcommand{\kleingenychar}{\psi_y}
\newcommand{\cpgenchar}{\chi}
\newcommand{\rootofone}{\rho}
\newcommand{\Mset}[2]{M(#1,#2)}
\newcommand{\Msum}[2]{\widehat{M(#1,#2)}}
\newcommand{\divisor}{D}
\author{Alexander Lang}
\title{An Enumeration of the Supercharacter Theories of $C_p \times C_2\times C_2$ for Prime $p$}
\begin{document}
\maketitle

\begin{abstract}
The supercharacter theories of $C_p \times C_2 \times C_2$ were classified in the language of Schur rings by Evdokimov, Kov\'acs, and Ponomarenko in \cite{schurityAbelian}. It was shown that every nontrivial supercharacter theory of $C_p \times C_2 \times C_2$ can be constructed as a wedge product, a direct product, or is generated by automorphisms. We use this classification to give a precise count of the distinct supercharacter theories of $C_p\times C_2\times C_2$ and describe when a supercharacter theory can be constructed by more than one method. We also present an alternative proof of the classification using the language of supercharacter theories.
\end{abstract}

\section{Introduction}
A supercharacter theory for a finite group $G$ is a pair of set partitions, one of $G$ and one of the set of irreducible characters $\irr(G)$, satisfying a small set of conditions. They correspond to subalgebras of the group algebra $Z(\mathbb{C}G)$ satisfying some special properties. They were first defined by Diaconis and Isaacs \cite{DiaconisIsaacs} and were used to assist in understanding the groups $\text{UT}_n(q)$ of unimodular upper triangular matrices over the finite field $\mathbb{F}_q$. Since then, there has been research connecting supercharacter theories to Hopf algebras and number theory by examining specific classes of supercharacter theories.

One natural question that arises is what are all the possible supercharacter theories for a given group $G$? One important family of groups for which the answer to that question is known is the cyclic groups as described in \cite{cyclicSchur} and \cite{cyclicSchurII}. In this case there are three general methods of constructing supercharacter theories which suffice to construct all possible nontrivial supercharacter theories. The first method generates the supercharacter theory as the orbits of the action of a subgroup of the automorphisms of $G$. The other two methods build the supercharacter theory from the supercharacter theories of smaller groups. First, if $G$ can be expressed as a direct product of two subgroups, we can form the direct product of any pair of supercharacter theories of the two subgroups of $G$. Second, we can use what are known as wedge products. We will only describe a special case as it is sufficient for our discussion, although the full generality is required for cyclic groups. For any normal subgroup $N$ of $G$, the wedge product combines any supercharacter theory for $N$ and any supercharacter theory for $G/N$ to form a supercharacter theory for $G$. It is possible that a supercharacter theory can be constructed by more than one of these methods, or by the same method in different ways. Also for general Abelian groups, it is known that these three methods are not sufficient. In particular there are nontrivial supercharacter theories for $p$-groups which cannot be constructed using these methods.

 It was shown in the proof of Theorem 1.5 in \cite{schurityAbelian} that these three methods are also sufficient to construct every nontrivial supercharacter theory of $C_p \times C_2 \times C_2$ where $p$ is prime. Using this classification, it was proved that $C_p \times C_2 \times C_2$ is a Schur group for any prime $p$. We use this classification to determine the total number of distinct supercharacter theories of $C_p \times C_2 \times C_2$ for $p$ odd, and further which supercharacter theories can be constructed by each of the methods, and which ones can be constructed in more than one way. Many of these supercharacter theories are isomorphic, however we will consider them distinct if the partitions of $C_p\times C_2\times C_2$ are distinct regardless of whether they are isomorphic or not. In Sections 3 through 7, we present an alternative proof of the classification using the language and techniques of supercharacter theories. Our proof often uses more elementary and detailed methods than the one presented in \cite{schurityAbelian}, however it is also much longer. A key element of our argument is the utilization of the properties of the sums of roots of unity which occur when the supercharacters are evaluated on superclasses. These techniques may potentially allow the classification to be extended in different directions, particularly to supercharacter theories of nonAbelian groups. The proof presented in Theorem 1.5 in \cite{schurityAbelian} is recommended for readers familiar with the language of Schur rings.

\section{Preliminaries}

We will use the following notation. We will denote the cyclic group of order $n$ by $C_n$.  As we will discuss $(C_2)^3$ separately, let $p$ be an odd prime. All groups will be written multiplicatively with identity $e$. If $H_1$ and $H_2$ are normal subgroups of $G$ such that $H_1 \cap H_2 =\{e\}$ and $\langle H_1,H_2 \rangle =G$ then $G \cong H_1 \times H_2$ and we call $H_1$ and $H_2$ a \textit{complementary pair}. Given a group $G$ we will let $\irr(G)$ represent the set of irreducible characters of $G$ over $\mathbb{C}$. We shall denote the character of the trivial representation by $triv$. For $K\subseteq G$ define 
\begin{align}
&K^{(i)}=\{k^i|k\in K\},
\\
&\widehat{K}=\sum_{g \in K}{g} \in \mathbb{C}G.
\end{align}

We begin with the definition of a \textit{supercharacter theory} for a finite group based on the original description given by Diaconis and Isaacs in \cite{DiaconisIsaacs}:
\begin{definition}
\cite{DiaconisIsaacs} Given a finite group $G$, a \textit{supercharacter theory} for $G$ is a pair $(\mathcal{X},\mathcal{K})$ where $\mathcal{X}$ is a partition of $\irr(G)$ and $\mathcal{K}$ is a partition of the set of conjugacy classes of $G$ satisfying the following conditions:

\begin{enumerate}
\item $\{e\}$ is an element of $\mathcal{K}$, and $\{\operatorname{triv}\}$ is an element of  $\mathcal{X}$,

\item $|\mathcal{K}|=|\mathcal{X}|$,

\item for all $X\in \mathcal{X}$ and all $K\in \mathcal{K}$ the class functions $\displaystyle \sigma_X=\sum_{\chi\in X}{\chi(e)\chi}$ satisfy $\sigma_X(g)=\sigma_X(h)$ for all $g,h\in K$.
\end{enumerate}

The elements of $\mathcal{K}$ are called \textit{superclasses} and the $\sigma_X$, for all $X \in \mathcal{X}$ are called \textit{supercharacters}. We shall denote the superclass containing the element $g$ by $\superclass{g}_{\mathcal{K}}$ or by $\superclass{g}$ when $\mathcal{K}$ is clear from context. We shall denote the $X\in \mathcal{X}$ containing the irreducible character $\chi$ by $\superclass{\chi}_{\mathcal{X}}$ or $\superclass{\chi}$.
\end{definition}

We will say that a supercharacter theory $(\mathcal{X}',\mathcal{K}')$ is a \textit{refinement} of the supercharacter theory $(\mathcal{X},\mathcal{K})$ if $\mathcal{X}'$ is a refinement of $\mathcal{X}$ as partitions or equivalently $\mathcal{K}'$ is a refinement of $\mathcal{K}$.
\begin{prop}
\cite[Th. 2.2]{DiaconisIsaacs} If $(\mathcal{X},\mathcal{K}_1)$ and $(\mathcal{X},\mathcal{K}_2)$ are supercharacter theories for $G$, then $\mathcal{K}_1=\mathcal{K}_2$. Similarly if $(\mathcal{X}_1,\mathcal{K})$ and $(\mathcal{X}_2,\mathcal{K})$ are supercharacter theories for $G$ then $\mathcal{X}_1=\mathcal{X}_2$.
\end{prop}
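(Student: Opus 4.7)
The plan is to recover each half of the pair $(\mathcal{X},\mathcal{K})$ from the other using only linear algebra on class functions, so that uniqueness follows automatically.

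For the first assertion, I would introduce the partition $\mathcal{K}^\ast$ of $G$ defined by $g \sim h$ iff $\sigma_X(g) = \sigma_X(h)$ for every $X \in \mathcal{X}$, and show $\mathcal{K} = \mathcal{K}^\ast$. Axiom (3) says each $\sigma_X$ is constant on every $K \in \mathcal{K}$, so $\mathcal{K}$ refines $\mathcal{K}^\ast$ and $|\mathcal{K}^\ast| \le |\mathcal{K}| = |\mathcal{X}|$. In the other direction, the space of class functions constant on $\mathcal{K}^\ast$ has dimension $|\mathcal{K}^\ast|$, and by construction it contains all the $\sigma_X$, which are linearly independent because distinct $X \in \mathcal{X}$ have disjoint supports in $\irr(G)$ and each coefficient $\chi(e)$ is strictly positive. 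Hence $|\mathcal{X}| \le |\mathcal{K}^\ast|$, forcing equality throughout and $\mathcal{K} = \mathcal{K}^\ast$; since $\mathcal{K}^\ast$ is defined solely from $\mathcal{X}$, any two superclass partitions compatible with $\mathcal{X}$ must coincide.

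For the second assertion, given $(\mathcal{X}_1,\mathcal{K})$ and $(\mathcal{X}_2,\mathcal{K})$, the preceding paragraph shows that both families $\{\sigma_X : X \in \mathcal{X}_i\}$ are bases of the $|\mathcal{K}|$-dimensional space $V_\mathcal{K}$ of class functions constant on $\mathcal{K}$. I would expand each $\sigma_X$ (for $X \in \mathcal{X}_1$) in the second basis as $\sigma_X = \sum_{X' \in \mathcal{X}_2} c_{X,X'} \sigma_{X'}$ and compare coefficients on each $\chi \in \irr(G)$. Since $\sigma_{X'}$ has coefficient $\chi(e)$ on every $\chi \in X'$ and $0$ elsewhere, and since $\sigma_X$ has the same structure on $X$, matching yields $c_{X,X'} \in \{0,1\}$, with $c_{X,X'} = 1$ precisely when $X' \subseteq X$ and $c_{X,X'} = 0$ precisely when $X' \cap X = \emptyset$; no mixed case is possible. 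Hence every $X \in \mathcal{X}_1$ is a union of $\mathcal{X}_2$-parts, so $\mathcal{X}_2$ refines $\mathcal{X}_1$, and the symmetric argument yields $\mathcal{X}_1 = \mathcal{X}_2$.

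The main subtlety is the coefficient-matching step: it is the extra rigidity that the irreducible coefficients of each $\sigma_X$ are equal to $\chi(e)$, rather than arbitrary scalars, which prevents $V_\mathcal{K}$ from admitting two genuinely distinct ``supercharacter-shaped'' bases. Without that rigidity one would only recover that the two partitions give bases of the same subspace, which is far weaker than $\mathcal{X}_1 = \mathcal{X}_2$.
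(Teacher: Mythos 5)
The paper offers no proof of this proposition; it is quoted directly from \cite{DiaconisIsaacs} (Theorem 2.2), so there is nothing internal to compare against. Your argument is correct and complete, and it is essentially the standard Diaconis--Isaacs argument: the counting step ($\mathcal{K}$ refines the partition $\mathcal{K}^\ast$ cut out by the common level sets of the $\sigma_X$, while linear independence of the $\sigma_X$ forces $|\mathcal{X}|\le|\mathcal{K}^\ast|$, so equality holds and $\mathcal{K}$ is determined by $\mathcal{X}$) and the coefficient-matching step (using that each $\sigma_X$ has coefficient exactly $\chi(e)>0$ on $\chi\in X$ and $0$ elsewhere, so a change of basis between two supercharacter bases of $V_{\mathcal{K}}$ must have $0$--$1$ entries and hence each partition refines the other) are both sound, and you correctly identify the rigidity of the coefficients as the crux of the second half.
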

\begin{lemma}
\cite[Lemma 6.1(a)]{Hendthesis}
If $(\mathcal{X},\mathcal{K})$ is a supercharacter theory for $G$ and $K\in \mathcal{K}$, then the subgroup of $G$ generated by $K$ is a union of superclasses.
\end{lemma}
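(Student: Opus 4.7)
The plan is to exploit the standard equivalent characterization of a supercharacter theory, alluded to in the introduction, that the superclass sums $\{\widehat{K'} : K'\in \mathcal{K}\}$ span a subalgebra $A$ of $Z(\mathbb{C}G)$. This transfers the question about the multiplicative structure of $K$ inside $G$ into a question about powers of $\widehat{K}$ inside $\mathbb{C}G$, which is where the argument is cleanest.

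First I would consider the powers $\widehat{K}^n$ for $n\geq 1$. Since $\widehat{K}\in A$ and $A$ is a subalgebra, each power lies in $A$, so we may write
\[
\widehat{K}^n \;=\; \sum_{K' \in \mathcal{K}} c^{(n)}_{K'}\,\widehat{K'}
\]
for some scalars $c^{(n)}_{K'}$. Because distinct superclass sums have disjoint support in the standard basis of $\mathbb{C}G$, the coefficient of any particular $g\in K'$ in $\widehat{K}^n$ written in that basis is exactly $c^{(n)}_{K'}$; on the other hand, expanding the product directly, this coefficient counts the number of tuples $(k_1,\dots,k_n)\in K^n$ with $k_1k_2\cdots k_n=g$, so it is a non-negative integer. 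Hence the support of $\widehat{K}^n$ in $G$ is a union of full superclasses of $\mathcal{K}$.

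Second, I would observe that $\bigcup_{n\geq 1}\operatorname{supp}(\widehat{K}^n)$ is by construction exactly the subsemigroup of $G$ generated by $K$: an element lies in this union iff it is a finite product of elements of $K$. Since $G$ is finite, every $g\in G$ satisfies $g^{-1}=g^{|g|-1}$, so any subsemigroup of $G$ is automatically a subgroup. Thus this union equals $\langle K\rangle$, and being a union of unions of superclasses, $\langle K\rangle$ itself is a union of superclasses.

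The whole proof is really a one-line consequence of the subalgebra characterization; I do not anticipate a serious obstacle. The only point worth spelling out carefully is that a finite subsemigroup of $G$ is a subgroup, which is what closes the gap between the combinatorial object (products of elements of $K$) and the algebraic object $\langle K\rangle$ named in the statement.
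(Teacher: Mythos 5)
Your argument is correct. Note, however, that the paper does not prove this lemma at all --- it is imported verbatim from \cite[Lemma 6.1(a)]{Hendthesis} --- so there is no in-paper proof to compare against. Your route is the standard one and fits the paper's framework exactly: by the correspondence of Proposition 2.5, the algebra $A$ has basis $\{\widehat{K'}\}_{K'\in\mathcal{K}}$, closure under $\regprod$ puts each $\widehat{K}^{\,n}$ in $A$, the coefficients of $\widehat{K}^{\,n}$ in the group-element basis are nonnegative integers constant on superclasses, so each $\operatorname{supp}(\widehat{K}^{\,n})$ is a union of superclasses, and the union over $n\geq 1$ is the subsemigroup generated by $K$, which equals $\langle K\rangle$ by finiteness of $G$. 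All the steps check out, including the one you rightly flag as needing care (a nonempty finite subsemigroup of a group is a subgroup, since $g^{-1}=g^{|g|-1}$ and $e=g^{|g|}$ lie in it).
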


We will require the use of an alternative description of a supercharacter theory based on the bijection between the set of supercharacter theories for a finite group $G$ and the set of Schur rings of $G$ which are contained in $Z(\mathbb{C}G)$ \cite{Hend}. For more information, see \cite{Wielandtbook}.

\begin{definition}
The \textit{Hadamard product} $\hadaprod$ is defined on $\mathbb{C}G$ by 
\begin{equation}
\displaystyle \left(\sum_{g\in G}{a_g g}\right)\hadaprod \left(\sum_{g\in G}{b_g g}\right)=\sum_{g\in G}{(a_g b_g) g}.
\end{equation}
\end{definition}
Note that $(\mathbb{C}G,\hadaprod)$ is a commutative associative algebra, with identity $\widehat{G}$. When necessary, we will denote the usual product on $\mathbb{C}G$ by $\regprod$, to distinguish it from the Hadamard product.

\begin{prop}
\label{algebraDef}
\cite[Prop. 2.4]{Hend} For a finite group $G$ there is a bijective correspondence between supercharacter theories $(\mathcal{X},\mathcal{K})$  and $\mathbb{C}$-linear subspaces $A$ of $Z(\mathbb{C}G)$ containing $e$ and $\widehat{G}$ which are closed under the operations $\regprod$ and $\hadaprod$.
\end{prop}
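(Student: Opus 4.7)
The plan is to realize the bijection through two canonical bases that any such subalgebra $A$ must possess: its $\hadaprod$-primitive idempotents, which should be superclass sums $\widehat{K}$, and its $\regprod$-primitive idempotents, which should be sums $e_X := \sum_{\chi \in X} e_\chi$ of central primitive idempotents of $\mathbb{C}G$. Each direction of the bijection constructs one basis from the given data and then reads off the other. Note first that $\widehat{G}$ is the $\hadaprod$-identity of $\mathbb{C}G$ while $e$ is the $\regprod$-identity, so the hypotheses force $A$ to be unital for both products.

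For the forward direction, I would send $(\mathcal{X},\mathcal{K})$ to $A := \operatorname{span}_{\mathbb{C}}\{\widehat{K} : K \in \mathcal{K}\}$. Each superclass is a union of conjugacy classes, so $A \subseteq Z(\mathbb{C}G)$; also $e = \widehat{\{e\}}$ and $\widehat{G} = \sum_K \widehat{K}$ lie in $A$. The identity $\widehat{K} \hadaprod \widehat{K'} = \delta_{K,K'}\widehat{K}$ (from disjoint supports) gives $\hadaprod$-closure immediately. Under the standard class-function/center identification, the supercharacter $\sigma_X$ corresponds to a nonzero scalar multiple of $e_X$; condition~3 asserts $\sigma_X$ is constant on each superclass $K$, hence $e_X \in \operatorname{span}\{\widehat{K}\} = A$. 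Condition~2 then makes $\{e_X : X \in \mathcal{X}\}$ a second basis of $A$, and as these are pairwise orthogonal $\regprod$-idempotents of $Z(\mathbb{C}G)$, $A$ is $\regprod$-closed.

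Conversely, given $A$ satisfying the hypotheses, I would recover both partitions from primitive idempotents. Since $(\mathbb{C}G, \hadaprod) \cong \mathbb{C}^{|G|}$ is a product of fields indexed by $G$, its $\hadaprod$-idempotents are precisely the $\{0,1\}$-indicators $\widehat{K}$; so the $\hadaprod$-primitive idempotents of the unital $\hadaprod$-subalgebra $A$ form a basis consisting of subset sums $\widehat{K}$ for a partition $\mathcal{K}$ of $G$, and the inclusion $A \subseteq Z(\mathbb{C}G)$ forces each $K$ to be a union of conjugacy classes. Dually, $Z(\mathbb{C}G)$ is semisimple commutative under $\regprod$ with primitive idempotents $e_\chi$ indexed by $\irr(G)$, so the $\regprod$-primitive idempotents of $A$ are sums $e_X$ for a partition $\mathcal{X}$ of $\irr(G)$. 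The axioms follow: $|\mathcal{K}| = |\mathcal{X}| = \dim A$; $\{e\} \in \mathcal{K}$ because $e \in A$ is itself a $\hadaprod$-atom; $\{\operatorname{triv}\} \in \mathcal{X}$ because $\widehat{G} \in A$ is proportional to $e_{\operatorname{triv}}$; and $\sigma_X$ corresponds to a multiple of $e_X \in \operatorname{span}\{\widehat{K}\}$, so $\sigma_X$ is constant on each $K$. The two constructions are then mutually inverse since each simply reads off the same canonical bases of $A$.

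The central delicacy is proving that the $\hadaprod$-primitive idempotents of $A$ are genuine subset-indicators $\widehat{K}$; this follows from the product-of-fields structure of $(\mathbb{C}G, \hadaprod)$, in which any idempotent has $\{0,1\}$ coefficients, so the $\hadaprod$-primitive idempotents of the unital subalgebra $A$ must partition $G$ into blocks via disjoint supports summing to $\widehat{G}$. The remainder of the proof is bookkeeping between the two bases.
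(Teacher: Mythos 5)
Your proof is correct and is essentially the standard argument: the paper itself imports this proposition from \cite{Hend} without proof, and that proof proceeds exactly as you do, identifying $\mathcal{K}$ and $\mathcal{X}$ with the $\hadaprod$- and $\regprod$-primitive idempotents of $A$ and using the fact that a unital subalgebra of a commutative semisimple algebra is again semisimple, so it is spanned by its (disjointly supported, $\{0,1\}$-coefficient) primitive idempotents. The one detail you elide is that the coefficient of $g$ in $e_X$ is $\sigma_X(g^{-1})/|G|$ rather than $\sigma_X(g)/|G|$, but since $\sigma_X(g^{-1})=\overline{\sigma_X(g)}$ this does not affect constancy on superclasses, so the bookkeeping goes through.
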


Because of the above bijection we shall also refer to such algebras as supercharacter theories. Given such an algebra $A$ we shall denote the corresponding partitions of $\irr(G)$ and $G$ by $\mathcal{X}_A$ and $\mathcal{K}_A$ respectively. 
\begin{rmrk}
Note that for $H \subseteq G$, $\widehat{H} \in A$ is equivalent to $H$ being a union of superclasses. Also for a supercharacter $\theta$ and an irreducible character $\chi$ the inner product $\langle \theta,\chi \rangle\not= 0$ is equivalent to $\chi$ a summand of $\theta$.
\end{rmrk}

We recall the following, for more details see \cite{DiaconisIsaacs}. We note that if $N$ is a normal subgroup of $G$ with a supercharacter theory $A$, and $\widehat{N} \in A$ then $\mathbb{C}N \cap A$ is a supercharacter theory for $N$. We shall call such a theory the \textit{restriction} of $A$ to $N$, denoted by $A|_N$. We also observe that $A|_N$ is the supercharacter theory of $N$ defined by the superclasses $\{K\in \mathcal{K}_A|K\subset N\}$. The supercharacter theory $A=Z(\mathbb{C}G)$ is called the \textit{minimal} supercharacter theory. The supercharacter theory with $\mathcal{K}=\{\{e\},G\setminus \{e\}\}$ is called the \textit{maximal} supercharacter theory, and we will also refer to it as the \textit{trivial} supercharacter theory.

We recall the following methods of constructing supercharacter theories. For cyclic groups, these three methods are sufficient to construct all nontrivial supercharacter theories. We note that it is sometimes possible to construct a given supercharacter theory using more than one of the following constructions.
\begin{prop}
\cite{DiaconisIsaacs} A subgroup $H$ of $\aut(G)$ acts on both the set of conjugacy classes of $G$ and $\irr(G)$. Letting $\mathcal{K}$ be the set of orbits of the action on the conjugacy classes and $\mathcal{X}$ be the orbits of the action on $\irr(G)$ yields a supercharacter theory.
\end{prop}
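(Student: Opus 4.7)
The plan is to verify directly the three conditions of Definition 1 for the pair $(\mathcal{X}, \mathcal{K})$ of $H$-orbits. Condition (1) is immediate: every $\phi \in H$ fixes the identity $e$ and the trivial character, so $\{e\}$ and $\{\operatorname{triv}\}$ occur as singleton orbits, placing them in $\mathcal{K}$ and $\mathcal{X}$ respectively.

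For condition (3), I would fix $X \in \mathcal{X}$ and $\phi \in H$ and use the natural action $\chi^\phi(g) := \chi(\phi^{-1}(g))$ on $\irr(G)$. Since $X$ is $H$-stable and automorphisms preserve character degrees, reindexing the sum by the bijection $\chi \mapsto \chi^{\phi^{-1}}$ on $X$ yields
\[
\sigma_X(\phi(g)) = \sum_{\chi \in X} \chi(e)\,\chi(\phi(g)) = \sum_{\chi \in X} \chi^{\phi^{-1}}(e)\,\chi^{\phi^{-1}}(g) = \sigma_X(g),
\]
so $\sigma_X$ is constant on each $H$-orbit of conjugacy classes, i.e.\ on each $K \in \mathcal{K}$.

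The main obstacle is condition (2), the equality $|\mathcal{X}| = |\mathcal{K}|$, since there is no manifest bijection between the two sets of orbits. I would invoke Brauer's permutation lemma, whose content is precisely that for a group acting on $G$ by automorphisms, the induced actions on conjugacy classes and on $\irr(G)$ have equal numbers of orbits. The standard proof observes that the character table $T$ is invertible and that, letting $P_{\mathcal{X}}(\phi)$ and $P_{\mathcal{K}}(\phi)$ denote the permutation matrices for $\phi$'s actions on rows and columns of $T$, the identity $\chi^\phi(g) = \chi(\phi^{-1}(g))$ forces these matrices to be conjugate via $T$. They therefore have equal trace, meaning $\phi$ has equally many fixed points in the two actions; averaging over $H$ via Burnside's orbit-counting formula then delivers $|\mathcal{X}| = |\mathcal{K}|$.
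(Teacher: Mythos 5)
Your proof is correct and complete: conditions (1) and (3) follow by the direct orbit-reindexing computation you give, and the only nontrivial point, $|\mathcal{X}|=|\mathcal{K}|$, is exactly Brauer's permutation lemma, which you invoke and sketch accurately. The paper offers no proof of this proposition---it is cited to Diaconis and Isaacs as a known construction---and your argument is essentially the standard one found in that reference.
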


We will say that such a supercharacter theory $(\mathcal{X},\mathcal{K})$ is \textit{generated by} $H$, or \textit{generated by automorphisms}.
\begin{prop}
\cite[Prop 8.1]{Hend} Suppose that $(\mathcal{X}_G,\mathcal{K}_G)$ is a supercharacter theory for $G$ and $(\mathcal{X}_H,\mathcal{K}_H)$ is a supercharacter theory for $H$. Then there is a supercharacter theory for $G\times H$ in which the superclasses are given by $\{K \times L |K\in \mathcal{K}_G,L\in \mathcal{K}_H\}$ and the supercharacters are given by $\{\phi\times \sigma |\phi \in \mathcal{X}_G,\sigma \in \mathcal{X}_H\}$.
\end{prop}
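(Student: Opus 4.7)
The cleanest approach is via the algebra characterization of Proposition \ref{algebraDef}. Let $A \subseteq Z(\mathbb{C}G)$ and $B \subseteq Z(\mathbb{C}H)$ be the subalgebras corresponding to $(\mathcal{X}_G,\mathcal{K}_G)$ and $(\mathcal{X}_H,\mathcal{K}_H)$, so that $A$ has basis $\{\widehat{K} : K \in \mathcal{K}_G\}$ and $B$ has basis $\{\widehat{L} : L \in \mathcal{K}_H\}$. Under the natural algebra isomorphism $\mathbb{C}G \otimes \mathbb{C}H \cong \mathbb{C}(G\times H)$ sending $g \otimes h \mapsto (g,h)$, I would show that $A \otimes B$ is a subalgebra of $Z(\mathbb{C}(G\times H))$ satisfying the hypotheses of Proposition \ref{algebraDef}, and then read off its associated partitions.

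First I would verify the four requirements: $(e,e) = e \otimes e$ lies in $A\otimes B$; $\widehat{G\times H} = \widehat{G}\otimes\widehat{H}$ lies in $A\otimes B$; the inclusion $A\otimes B \subseteq Z(\mathbb{C}(G\times H))$ follows from $Z(\mathbb{C}(G\times H)) = Z(\mathbb{C}G)\otimes Z(\mathbb{C}H)$; and closure under $\regprod$ is immediate because the tensor identification is an algebra homomorphism with respect to the regular product. The only step requiring a short calculation is closure under $\hadaprod$, which reduces to the observation that under the identification,
$$\widehat{K_1 \times L_1} \hadaprod \widehat{K_2 \times L_2} = (\widehat{K_1}\hadaprod\widehat{K_2})\otimes(\widehat{L_1}\hadaprod\widehat{L_2}),$$
so closure of $A$ and $B$ under their respective Hadamard products transfers to $A\otimes B$.

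Having established that $A \otimes B$ is a supercharacter theory, I would identify the superclasses as the supports of the basis elements $\widehat{K}\otimes\widehat{L} = \widehat{K\times L}$, giving $\mathcal{K} = \{K\times L : K\in\mathcal{K}_G,\, L\in \mathcal{K}_H\}$. For the supercharacters, using $\irr(G\times H) = \{\chi\psi : \chi \in \irr(G),\, \psi\in\irr(H)\}$ and the definition of $\sigma_X$, a direct computation gives
$$\sigma_{\phi \times \sigma}(g,h) = \sum_{\chi \in \phi}\sum_{\psi\in \sigma}\chi(e)\psi(e)\chi(g)\psi(h) = \sigma_\phi(g)\,\sigma_\sigma(h),$$
which is simultaneously constant on each block $K\times L$ and exhibits the supercharacter of the block $\phi \times \sigma$ as the product of the supercharacters of $\phi$ and $\sigma$, matching the claimed description.

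I do not anticipate any substantive obstacle. The only point requiring care is the verification of closure under $\hadaprod$, since the Hadamard product is defined on the group-element basis rather than on the superclass sum basis; but this is transparent once one notes that the identification $g\otimes h \mapsto (g,h)$ is a bijection on basis elements and hence respects the coefficient-wise multiplication that defines $\hadaprod$.
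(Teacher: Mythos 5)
Your argument is correct. Note that the paper does not prove this proposition at all — it is quoted from the cited reference without proof — so there is no in-paper argument to compare against; your route through Proposition \ref{algebraDef}, identifying the direct product theory with the subalgebra $A\otimes B\subseteq Z(\mathbb{C}G)\otimes Z(\mathbb{C}H)=Z(\mathbb{C}(G\times H))$ and checking containment of $e$ and $\widehat{G\times H}$ together with closure under $\regprod$ and $\hadaprod$ on the basis $\{\widehat{K}\otimes\widehat{L}\}$, is the standard one and is essentially how the result is established in the cited source. The one point worth making fully explicit is the final identification of the character partition: having shown each $\sigma_{\phi\times\sigma}$ is constant on each $K\times L$ and that the two partitions have equal cardinality $|\mathcal{X}_G||\mathcal{X}_H|=|\mathcal{K}_G||\mathcal{K}_H|$, you should invoke the uniqueness statement (Proposition 2.1 of the paper) to conclude that $\{\phi\times\sigma\}$ is \emph{the} character partition attached to these superclasses, rather than merely a compatible one.
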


We will often refer to a supercharacter theory constructed in this way as the \textit{direct product} of the supercharacter theories $(\mathcal{X}_G,\mathcal{K}_G)$ and $(\mathcal{X}_H,\mathcal{K}_H)$.
\begin{prop}
\cite[Th. 4.2]{Hend} Let $N$ be a normal subgroup of $G$, and let $\pi: G \rightarrow G/N$ be the natural quotient map. Suppose that $(\mathcal{X}_N,\mathcal{K}_N)$ is a supercharacter theory for $N$, and $(\mathcal{X}_{G/N},\mathcal{K}_{G/N})$ is a supercharacter theory for $G/N$. Then there is a supercharacter theory for $G$ in which the set of superclasses is
\begin{equation}
\mathcal{K}_N \cup \{\pi^{-1}(K)| K \in \mathcal{K}_{G/N}\setminus \{\{e\}\}\}.
\end{equation}
For $\psi \in \irr(N)$ let $\irr(G|\psi)$ be the set of $\chi \in \irr(G)$ such that $\langle \chi|_N , \psi \rangle > 0$. The corresponding partition of $\irr(G)$ is given by 
\begin{equation}
\mathcal{X}_{G/N} \cup \{ \bigcup_{\psi \in X} \irr(G|\psi)|X\in \mathcal{X}_N, X\not=\{\operatorname{triv}\}\}.
\end{equation}

\end{prop}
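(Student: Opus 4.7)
The plan is to invoke the algebraic characterization in Proposition \ref{algebraDef}. I would define $A \subseteq \mathbb{C}G$ to be the $\mathbb{C}$-span of the proposed superclass sums, namely
\begin{equation}
\{\widehat{K'} : K' \in \mathcal{K}_N\} \cup \{\widehat{\pi^{-1}(K)} : K \in \mathcal{K}_{G/N}\setminus\{\{e\}\}\},
\end{equation}
and verify that $A$ sits inside $Z(\mathbb{C}G)$, contains $e$ and $\widehat{G}$, and is closed under both $\regprod$ and $\hadaprod$. The superclass partition of the resulting supercharacter theory then matches the claim by construction, and the description of $\mathcal{X}$ will follow by identifying the central primitive idempotents of $A$.

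The easy conditions I would handle first: $e = \widehat{\{e\}}$ with $\{e\} \in \mathcal{K}_N$, and $\widehat{G} = \widehat{N} + \sum_{K \in \mathcal{K}_{G/N}\setminus\{\{e\}\}} \widehat{\pi^{-1}(K)}$ where $\widehat{N} = \sum_{K' \in \mathcal{K}_N} \widehat{K'} \in A$. Each $\widehat{\pi^{-1}(K)}$ is $G$-central because its support is a union of full $N$-cosets and $N \trianglelefteq G$, and each $\widehat{K'}$ is central since in this paper's abelian setting every $N$-class sum is automatically $G$-invariant. Hadamard closure is immediate because the proposed superclasses partition $G$, so $\widehat{K_1}\hadaprod \widehat{K_2}$ is $\widehat{K_1}$ when $K_1 = K_2$ and zero otherwise.

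Convolution closure is the crux. The key tool is the map $\iota : \mathbb{C}(G/N) \to \mathbb{C}G$ sending $\bar g \mapsto \frac{1}{|N|}g\widehat{N}$, which is well defined and multiplicative since $\iota(\bar g)\iota(\bar h) = \frac{1}{|N|^2}g\widehat{N}h\widehat{N} = \frac{1}{|N|}gh\widehat{N} = \iota(\overline{gh})$, using that $\widehat{N}$ is central in $\mathbb{C}G$ and $\widehat{N}^2 = |N|\widehat{N}$. Since $\widehat{\pi^{-1}(K)} = |N|\iota(\widehat{K})$, products among the pullbacks yield $|N|\sum_K c_K \widehat{\pi^{-1}(K)}$, which lies in $A$ after absorbing the $K = \{e\}$ term via $\widehat{\pi^{-1}(\{e\})} = \widehat{N} \in A$. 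Products of $\mathcal{K}_N$-class sums stay inside $A|_N \subseteq A$ by hypothesis. For a mixed product, the identity $k\widehat{N} = \widehat{N}$ for $k \in N$ gives $\widehat{K'}\widehat{N} = |K'|\widehat{N}$, and centrality of $\widehat{N}$ collapses $\widehat{K'} \regprod \widehat{\pi^{-1}(K)}$ to $|K'|\widehat{\pi^{-1}(K)} \in A$.

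Once $A$ is shown to be a supercharacter theory algebra, I would identify $\mathcal{X}$ by Clifford-theoretic bookkeeping. Characters of $G$ with $N$ in their kernel correspond to $\irr(G/N)$, their central primitive idempotents lie in the image of $\iota$, and they inherit the partition $\mathcal{X}_{G/N}$; the remaining characters are grouped according to which $\mathcal{X}_N$-block contains their $N$-constituents. The main obstacle is verifying that the second grouping really is an $\mathcal{X}_A$-block and not merely a union of them, which I would pin down by checking that the count $|\mathcal{K}_N| + |\mathcal{K}_{G/N}| - 1 = |\mathcal{K}|$ forces equality via $|\mathcal{X}| = |\mathcal{K}|$.
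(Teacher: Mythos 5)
This proposition is imported from Hendrickson \cite{Hend} and the paper itself offers no proof, so there is no internal argument to compare against; your proposal reconstructs essentially the standard proof via the algebra characterization of Proposition \ref{algebraDef}, and its outline is sound. Your convolution computations are correct: the map $\iota$ is multiplicative because $\widehat{N}$ is a central element with $\widehat{N}\regprod\widehat{N}=|N|\widehat{N}$, products of pullbacks land in the span of the $\widehat{\pi^{-1}(K)}$ together with $\widehat{N}$, and the mixed products collapse via $\widehat{K'}\regprod\widehat{N}=|K'|\widehat{N}$. Two points need tightening. First, centrality: for $\widehat{\pi^{-1}(K)}$ it is not enough that the support is a union of $N$-cosets; you also need that $K$ is a union of conjugacy classes of $G/N$ (true, since $K$ is a superclass), and for $\widehat{K'}$ with $K'\in\mathcal{K}_N$ you rightly fall back on the abelian setting --- in general the proposition as literally stated requires the additional hypothesis that $\mathcal{K}_N$ be invariant under $G$-conjugation, which Hendrickson assumes and which is vacuous for the abelian groups this paper treats, so your proof covers every use the paper makes of the statement. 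Second, the identification of $\mathcal{X}_A$ is only sketched; the efficient way to close it is the reverse of what you describe: verify directly that any two characters in the same \emph{proposed} block take equal sums over every proposed superclass (for $\chi,\chi'$ lying over nontrivial $\psi,\psi'$ in a common $X\in\mathcal{X}_N$, the sums over each $\pi^{-1}(K)$ vanish because $\sum_{n\in N}\psi(n)=0$, and the sums over $K'\in\mathcal{K}_N$ agree because $\psi,\psi'$ lie in the same block of $\mathcal{X}_N$), so that each proposed block is contained in a single block of $\mathcal{X}_A$; your count $|\mathcal{X}_{G/N}|+|\mathcal{X}_N|-1=|\mathcal{K}_A|=|\mathcal{X}_A|$ then forces the two partitions to coincide. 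With those details supplied the argument is complete.
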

We will call such a supercharacter theory the \textit{wedge product} of the supercharacter theory for $N$ and the supercharacter theory of $G/N$, to agree with the conventions in \cite{cyclicSchur} and \cite{cyclicSchurII}.

We note that the above construction is a special case of a more general method, see \cite{Hend}. When classifying the supercharacter theories of cyclic groups the full generality is necessary, but in our case this version will suffice.

\section{Structure of the Main Argument}
We are now ready to present the classification given in the proof of Theorem 1.5 in \cite{schurityAbelian} in our current terminology:
\begin{theorem}\label{sufficient}
\cite{schurityAbelian}
Let $p$ be prime, $G=C_p\times C_2 \times C_2$. Then every nontrivial supercharacter theory $A$ of $G$ is at least one of the following:
\begin{enumerate}
\item generated by automorphisms,
\item the direct product of supercharacter theories for a pair of complementary subgroups $H_1,H_2 \leq G$,
\item the wedge product of supercharacter theories for $H\leq G$ and $G/H$.
\end{enumerate}
\end{theorem}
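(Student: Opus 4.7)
The plan is to fix an arbitrary nontrivial supercharacter theory $A$ of $G=C_p\times C_2\times C_2$, view it as a subalgebra of $Z(\mathbb{C}G)$ via Proposition \ref{algebraDef}, and carry out a case analysis on the lattice
\[\mathcal{L}_A=\{H\leq G : \widehat{H}\in A\}\]
of subgroups that are unions of superclasses. By Lemma 2.1, for each superclass $K$ the subgroup $\langle K\rangle$ lies in $\mathcal{L}_A$, so $\mathcal{L}_A$ already encodes substantial information about $A$. Since $G$ has a restricted subgroup lattice---three order-$2$ subgroups of the Klein four-group $V=C_2\times C_2$, the group $V$ itself, the unique $C_p$, and three subgroups of order $2p$ of the form $C_p\times\langle x\rangle$ for $x$ an involution in $V$---the possibilities for $\mathcal{L}_A$ are enumerable.

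The wedge and direct product conclusions I would handle by recognizing appropriate patterns in $\mathcal{L}_A$. If there is a nontrivial proper $N\in\mathcal{L}_A$ such that every superclass of $A$ lying outside $N$ is a union of cosets of $N$, then $A$ is the wedge product of the restriction $A|_N$ and the induced theory on $G/N$. If $G$ admits a complementary pair $H_1,H_2\in\mathcal{L}_A$ and every superclass of $A$ factors as a product $K_1K_2$ with $K_i$ a superclass of $A|_{H_i}$, then $A$ is the corresponding direct product. In both cases the verification reduces to the definitions of the constructions combined with the restriction identity $A|_H=\mathbb{C}H\cap A$.

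The substantive case is when neither of these patterns holds, and showing that $A$ must then be generated by automorphisms is the main obstacle. The strategy is to exploit the explicit form of the irreducible characters of $G$, which factor as $\cpgenchar\otimes\psi$ for a character $\cpgenchar$ of $C_p$ and a character $\psi$ of $V$. Evaluating supercharacters of $A$ on candidate superclasses yields sums of powers of a primitive $p$th root of unity $\rootofone$, and closure of $A$ under $\regprod$ and $\hadaprod$ imposes algebraic relations among these sums. Combined with the Galois action on $\mathbb{Q}(\rootofone)$ and the prime order of $C_p$, these relations are rigid enough to force each superclass to be an orbit under a subgroup of $\aut(G)\cong\aut(C_p)\times S_3$---one extracts the action on $C_p$ as the orbits of a subgroup of $(\mathbb{Z}/p\mathbb{Z})^\times$, and the action on $V$ as the orbits of a subgroup of $\aut(V)\cong S_3$. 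The bulk of the work consists in ruling out potential superclass partitions that are neither wedges, direct products, nor orbit partitions; this combinatorial bookkeeping is what makes the proof long compared with the Schur-ring argument of \cite{schurityAbelian}.
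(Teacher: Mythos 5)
Your outline is broadly aligned with the paper's strategy --- a case analysis organized by which subgroups $H$ satisfy $\widehat{H}\in A$, with root-of-unity rigidity doing the work in the hard cases --- but it has a genuine gap at its foundation. Your case analysis on $\mathcal{L}_A=\{H\leq G:\widehat{H}\in A\}$ silently assumes that $\mathcal{L}_A$ contains a proper nontrivial subgroup, and nothing you cite guarantees this: Lemma 2.1 only gives $\langle K\rangle\in\mathcal{L}_A$ for each superclass $K$, and a non-identity superclass may well generate all of $G$. If $\mathcal{L}_A=\{\{e\},G\}$ then $A$ can be none of the three types (a wedge or direct product forces a proper nontrivial member of $\mathcal{L}_A$, and an automorphism-generated theory of $C_p\times C_2\times C_2$ has $\widehat{C_p}\in A$ since automorphisms preserve element orders), so this case must be \emph{excluded}, not merely listed. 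For odd $p$ the paper does this by invoking Theorem \ref{Wielandt} (Wielandt's Theorem 25.4, applicable because the Sylow $p$-subgroup is nontrivial and cyclic); this is an essential external input, not a consequence of the combinatorics you describe, and your proposal never supplies it or a substitute.

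Second, the theorem covers all primes, but your structural claims are false for $p=2$: in $(C_2)^3$ there is no ``unique $C_p$,'' there are seven subgroups of each of the orders $2$ and $4$ rather than the lattice you describe, and $\aut(G)\cong GL_3(\mathbb{F}_2)$, not $\aut(C_p)\times S_3$. Moreover Theorem \ref{Wielandt} does not apply there, so even the nonemptiness of $\mathcal{L}_A\setminus\{\{e\},G\}$ needs a separate elementary argument (the paper gives one in Section \ref{p2}, ruling out superclass sizes $2,3,5,6,7$ by squaring superclass sums). You would need to treat $p=2$ as a separate case or restrict the claim to odd $p$. As a minor point, the paper also cuts the number of cases roughly in half via the dual supercharacter theory (Lemma \ref{dual}), which your plan omits; that is a matter of efficiency rather than correctness, but without it the ``combinatorial bookkeeping'' you defer to is substantially larger than you may anticipate.
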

We will now outline the structure of our version of a proof. The following theorem is a key part of our argument.
\begin{theorem}
\label{Wielandt}
\cite[Th. 25.4]{Wielandtbook} If $G$ is an Abelian group of composite order and there exists a prime $p$ such that the $p$-Sylow subgroup of $G$ is nontrivial and cyclic, then for every nontrivial supercharacter theory $A$ of $G$ there exists a proper nontrivial subgroup $H$ such that $\widehat{H} \in A$.
\end{theorem}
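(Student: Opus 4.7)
My plan is to argue via the Schur ring associated to $A$ by Proposition~\ref{algebraDef}: since $G$ is abelian, $Z(\mathbb{C}G) = \mathbb{C}G$, so $A$ is a $\hadaprod$- and $\regprod$-closed subalgebra containing $e$ and $\widehat{G}$. The goal is a proper nontrivial $H \leq G$ with $\widehat{H} \in A$. By Lemma~2.1, it suffices to produce a nontrivial superclass $K \in \mathcal{K}_A$ with $\langle K \rangle \subsetneq G$: then $\widehat{\langle K \rangle} \in A$ by that lemma and $\langle K \rangle$ works as $H$. I would therefore assume, for contradiction, that every nontrivial superclass generates $G$, and derive that $A$ must be the trivial (maximal) supercharacter theory.

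Decompose $G = P \times Q$, where $P$ is the cyclic Sylow $p$-subgroup of order $p^a$ and $Q$ is a complement of order coprime to $p$ ($Q$ trivial when $G$ itself is a cyclic $p$-group, in which case compositeness forces $a \geq 2$). Under the contradiction hypothesis, for every nontrivial $K$ the projection $\pi_P(K)$ generates the cyclic group $P$, so some $g \in K$ has $P$-part $g_P$ of order exactly $p^a$. For $\chi \in \irr(G)$, the factorization $\chi(g) = \chi|_P(g_P)\,\chi|_Q(g_Q)$ lets me track $\sigma_X(g) = \sum_{\chi \in X}\chi(g)$ within the cyclotomic tower
\[
\mathbb{Q} \subset \mathbb{Q}(\zeta_p) \subset \mathbb{Q}(\zeta_{p^2}) \subset \cdots \subset \mathbb{Q}(\zeta_{p^a}),
\]
whose successive quotients have prime degree $p$ precisely because $P$ is cyclic. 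Constancy of each $\sigma_X$ along $K$, combined with the variability of $P$-parts of elements of $K$, should force each $X \in \mathcal{X}$ either to consist only of characters whose $P$-restriction is trivial, or to be stable under the Galois action of $\operatorname{Gal}(\mathbb{Q}(\zeta_{p^a})/\mathbb{Q})$ on its $P$-components. In the former case, $\mathcal{X}$ descends to a partition of $\widehat{G/P_0}$ where $P_0$ is the unique order-$p$ subgroup of $P$, which would already yield $\widehat{P_0} \in A$ and contradict the assumption (since $P_0$ is proper nontrivial). In the latter, an induction on $a$ through the layers of the cyclotomic tower collapses $\mathcal{X}$ rung by rung, until it is forced to be $\{\{\operatorname{triv}\}, \irr(G) \setminus \{\operatorname{triv}\}\}$, i.e., $A$ is trivial.

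The hardest part will be executing this Galois-theoretic dichotomy cleanly and ruling out intermediate mixed scenarios where $\mathcal{X}$ is neither Galois-stable nor descends to a quotient. Cyclicity of $P$ is essential: it is what guarantees the cyclotomic tower has each step of prime degree $p$, converting the character-partition analysis into a manageable induction on $a$; without cyclicity, the $p$-power characters would split into incomparable subgroups of $\widehat{P}$, allowing refined partitions of $\irr(G)$ that bypass the above dichotomy. This sketch essentially repackages Wielandt's proof in \cite{Wielandtbook} using supercharacter-theoretic language in place of Schur-ring bookkeeping; a self-contained proof would need to verify the Galois closure claims by a direct computation with the $\hadaprod$- and $\regprod$-closure of $A$ applied to the idempotents $\frac{1}{|G|} \sum_g \overline{\sigma_X(g)} g$.
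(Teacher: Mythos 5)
First, a point of orientation: the paper does not prove this statement at all --- it is imported verbatim as Theorem 25.4 of \cite{Wielandtbook} and used as a black box, so there is no in-paper argument to compare yours against. Your proposal therefore has to stand on its own as a proof, and as written it does not: it is a plan with the decisive step missing. The reduction at the start is fine (if some nontrivial superclass $K$ has $\langle K\rangle \subsetneq G$ you are done by Lemma 2.1, so you may assume every nontrivial superclass generates $G$), and the decomposition $G=P\times Q$ is harmless. But the entire content of the theorem is concentrated in the ``Galois-theoretic dichotomy,'' which you state with the words ``should force'' and never establish. Nothing in the proposal explains why constancy of $\sigma_X$ on a superclass containing an element whose $P$-part has order $p^a$ forces $X$ to be either killed by $P$ or Galois-stable in its $P$-components; you yourself flag ``ruling out intermediate mixed scenarios'' as the hardest part and then do not address it. Likewise, ``an induction on $a$ through the layers of the cyclotomic tower collapses $\mathcal{X}$ rung by rung'' is a description of a hoped-for outcome, not an argument. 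The actual proof in Wielandt's book runs through the Schur--Wielandt principle (extracting, from any element $\sum a_g g$ of the ring, the indicator of a level set of the coefficients) together with the congruence $\widehat{T}^{\,p}\equiv\widehat{T^{(p)}}\pmod p$ and the power maps $K\mapsto K^{(m)}$ for $m$ coprime to $|G|$; your Galois picture is the character-side shadow of those power maps, but the combinatorial work that makes the argument close is exactly what is omitted.

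There is also a concrete error in the one branch you do try to close. If every character in some (or even every) $X$ has trivial restriction to $P$, the subgroup of $\irr(G)$ you are detecting is $\irr(G/P)$, and by Lemma \ref{dual} what you would obtain is $\widehat{P}\in A$, not $\widehat{P_0}\in A$; these are different subgroups when $a\geq 2$. Worse, in the case where $G$ itself is a cyclic $p$-group (which your setup explicitly allows, with $Q$ trivial and $a\geq 2$), $P=G$ is not a proper subgroup, so this branch yields nothing and must be handled by a separate argument that your sketch does not supply. To repair the proposal you would need to either carry out the Galois/power-map analysis in full (essentially reproving Wielandt's Theorem 25.4, including the Schur--Wielandt coefficient-extraction step, which is where the $\hadaprod$-closure of $A$ actually gets used) or simply cite the theorem as the paper does.
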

It is clear that this theorem applies  to $C_p\times C_2\times C_2$ when $p$ is an odd prime. Our classification will be split into four cases based on what collection of proper nontrivial subgroups are unions of superclasses, and by this theorem we know that this collection is nonempty.  Every proper nontrivial subgroup of $C_p\times C_2\times C_2$ is isomorphic to $C_2$, $C_2 \times C_2$, $C_p$, or $C_p \times C_2$. There are three isomorphic copies of $C_2$ and $C_p\times C_2$ respectively. There is one copy of $C_2\times C_2$ and one copy of $C_p$.

We see that there are a large number of different possibilities for which subgroups are unions of superclasses. However, we can reduce the number of cases which must be considered in the following way. Recall that if $G$ is Abelian then $\irr(G) \cong G$ as groups, although the isomorphism is non-canonical. In \cite{Hendconstruction} the notion of a \textit{dual supercharacter theory} is introduced using the canonical isomorphism $G \cong \irr(\irr(G))$, and it is shown that every supercharacter theory $A$ of an Abelian group $G$ yields a unique supercharacter theory $B$ of $\irr(G)$ where $\mathcal{K}_B=\mathcal{X}_A$. $B$ is not in general isomorphic to $A$. We will make frequent use of the following lemma, for convenience we present a proof:
\begin{lemma}
\label{dual}
\cite[Lemma 11.1]{Hendconstruction} Let $A$ be a supercharacter theory for $G$ and let $B$ be its dual supercharacter theory for $\irr(G)$. If $H\leq G$ and $\widehat{H} \in A$ then there exists $N \leq \irr(G)$ such that $N\cong G/H$ and  $\widehat{N} \in B$. Further this relation is inclusion reversing: if  $\widehat{H'} \in A$ and $H\leq H'$ then there exists $N' \leq N$ such that $N'\cong G/H'$ and $\widehat{N'} \in B$.
\end{lemma}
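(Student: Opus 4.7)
My plan is to take $N$ to be the annihilator $H^\perp=\{\chi\in\irr(G):\chi|_H\equiv 1\}$, which by Pontryagin duality for finite abelian groups is isomorphic to $\widehat{G/H}\cong G/H$ via inflation. Since the superclasses of $B$ are, by the definition of the dual theory, the parts of $\mathcal{X}_A$, what remains is to show that $H^\perp$ is a union of elements of $\mathcal{X}_A$; that is exactly the statement $\widehat{H^\perp}\in B$.

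The key fact I would use is the following characterization: two irreducible characters $\chi_1,\chi_2\in\irr(G)$ lie in the same $X\in\mathcal{X}_A$ if and only if they agree on every element of $A$, viewed as algebra homomorphisms $\mathbb{C}G\to\mathbb{C}$. This is essentially the content of Proposition~\ref{algebraDef} on the algebra side: the commutative semisimple algebra $(A,*)$ has exactly $|\mathcal{X}_A|$ characters, and these correspond to the primitive idempotents $\pi_X=\sum_{\chi\in X}e_\chi$ (where $e_\chi=\tfrac{1}{|G|}\sum_g\overline{\chi(g)}\,g$); a dimension count together with the fact that $\sigma_X$ is constant on superclasses forces the equivalence classes of characters restricted to $A$ to agree with the parts of $\mathcal{X}_A$. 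Granted this, the element $\widehat{H}\in A$ satisfies $\chi(\widehat{H})=\sum_{h\in H}\chi(h)$, which equals $|H|$ when $\chi\in H^\perp$ and $0$ otherwise. Constancy of this quantity across each $X\in\mathcal{X}_A$ therefore forces every $X$ to lie entirely inside $H^\perp$ or entirely outside it, so $H^\perp$ is a union of parts of $\mathcal{X}_A$, as required.

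The inclusion-reversing statement follows by applying the same argument to $H'$, producing $N'=(H')^\perp\leq\irr(G)$ with $\widehat{N'}\in B$ and $N'\cong G/H'$; the elementary Pontryagin identity $H\leq H'\Rightarrow (H')^\perp\leq H^\perp$ then yields $N'\leq N$. The main conceptual hurdle is the characterization of $\mathcal{X}_A$ in terms of characters of $A$; if one wishes to avoid invoking the full Schur-ring/supercharacter bijection one can verify it directly by expanding each side on the basis $\{\widehat{K}:K\in\mathcal{K}_A\}$ of $A$ and comparing with the supercharacters $\sigma_X$, but this is the only step with any subtlety, as everything else is standard orthogonality of characters and Pontryagin duality.
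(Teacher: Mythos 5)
Your proof is correct, but it reaches the conclusion by a different mechanism than the paper. The paper introduces the auxiliary three-dimensional supercharacter theory with superclasses $\{e\}$, $H\setminus\{e\}$, $G\setminus H$, identifies its character partition as $\{\operatorname{triv}\}$, $N\setminus\{\operatorname{triv}\}$, $\irr(G)\setminus N$ (its $N$ is your $H^{\perp}$), and then notes that $\widehat{H}\in A$ makes $\mathcal{K}_A$ a refinement of that coarse theory, so $\mathcal{X}_A$ refines its character partition and $N$ is a union of parts. You instead evaluate the irreducible characters, extended to algebra homomorphisms on $\mathbb{C}G$, directly on the element $\widehat{H}\in A$: since $\chi(\widehat{H})\in\{0,|H|\}$ according to whether $\chi\in H^{\perp}$, and since characters lying in the same part of $\mathcal{X}_A$ take equal values on $A$ (because the primitive idempotents of $(A,\regprod)$ are exactly the sums of the idempotents $e_\chi$ over the parts $X\in\mathcal{X}_A$), each part lies entirely inside or entirely outside $H^{\perp}$. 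The two arguments rest on closely related structural facts: your "same part implies agreement on $A$" is what underlies the paper's use of the refinement correspondence, and the paper's identification of the coarse theory's character partition conceals the same computation of $\chi(\widehat{H})$. Yours is more self-contained at the level of the algebra $A$ (and only the forward direction of your "key fact" is actually needed), while the paper's is shorter because it delegates the work to the refinement formalism. The inclusion-reversing claim is handled identically in both, via $H\leq H'$ implying $(H')^{\perp}\leq H^{\perp}$.
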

\begin{proof}
Consider the dimension three supercharacter theory of $G$ with superclasses $\{e\}, H\setminus \{e\}, G\setminus H$. The corresponding partition of $\irr(G)$ is $\{\operatorname{triv}\}$, $N\setminus \{\operatorname{triv}\}$, and $\irr(G)\setminus N$ where $N$ is the subgroup of $\irr(G)$ satisfying
\begin{equation}
\bigcap_{\chi \in N} \ker \chi =H.
\end{equation}
Then by considering the superclasses, we see that $A$ is a refinement of this theory. Hence $N$ is a union of elements of $\mathcal{X}_A$ as desired. It is clear that if $H \leq H'$, then $N' \leq N$, as every $\chi$ which contains $H'$ in its kernel also contains $H$ in its kernel, so we are done.
\end{proof}
We also recall the following part of Corollary 11.6 of \cite{Hendconstruction}:
\begin{lemma}
\cite[Cor. 11.6]{Hendconstruction} A supercharacter theory $A$ of $G$ can be constructed as a wedge product iff the dual of $A$ can be constructed as a wedge product.
\end{lemma}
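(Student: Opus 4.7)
The plan is to reformulate being a wedge product in a condition that dualizes cleanly, then apply Lemma~\ref{dual} together with the explicit description of the wedge product's character partition. Concretely, I would first establish the following characterization: $A$ is a wedge product iff there is a normal subgroup $N$ with $\{e\}\subsetneq N\subsetneq G$ such that (a) $\widehat{N}\in A$, and (b) every superclass of $A$ not contained in $N$ is a union of cosets of $N$. The forward direction falls out of the wedge product's superclass description. The reverse direction requires checking that the partition of $G/N$ obtained by pushing $\mathcal{K}_A$ through $\pi$ (and collapsing the superclasses inside $N$ to $\{\bar e\}$) is a supercharacter theory and that its wedge product with $A|_N$ recovers $A$; this reduces to a short closure check using Proposition~\ref{algebraDef}.

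With that characterization in hand, suppose $A$ is a wedge product over $N$ and let $B$ be its dual. Applying Lemma~\ref{dual} to $\widehat{N}\in A$ produces $M\leq\irr(G)$ with $M\cong G/N$ and $\widehat{M}\in B$, giving condition (a) for $B$ over $M$; under the standard identification $M=\{\chi\in\irr(G):N\subseteq\ker\chi\}$. For condition (b), I would use the explicit character partition of a wedge product,
\begin{equation}
\mathcal{X}_A=\mathcal{X}_{G/N}\cup\Bigl\{\bigcup_{\psi\in X}\irr(G\mid\psi):X\in\mathcal{X}_N,\ X\neq\{\operatorname{triv}\}\Bigr\}.
\end{equation}
The blocks coming from $\mathcal{X}_{G/N}$ consist of characters that factor through $G/N$ and hence lie in $M$. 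For each non-trivial $X\in\mathcal{X}_N$ and each $\psi\in X$, because $G$ is Abelian $\chi|_N$ is a single irreducible character, so $\irr(G\mid\psi)=\{\chi\in\irr(G):\chi|_N=\psi\}$ is a single coset of $M$ in $\irr(G)$. Hence every block of $\mathcal{K}_B=\mathcal{X}_A$ outside $M$ is a union of $M$-cosets, and (b) transfers to $B$ over $M$. By the characterization, $B$ is a wedge product.

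The reverse implication comes for free by applying the forward direction to $B$ and using the canonical identification $G\cong\irr(\irr(G))$, under which the dual of $B$ is $A$. The main obstacle I anticipate is in cleanly verifying the ``if'' half of the characterization: confirming that the partition inherited on $G/N$ satisfies the supercharacter theory axioms needs a careful algebraic check, but it is the kind of verification that is routine once framed through Proposition~\ref{algebraDef}. Everything downstream is bookkeeping around the identification $\irr(G/N)\cong\{\chi\in\irr(G):N\subseteq\ker\chi\}$ and the observation that restrictions of Abelian group characters to subgroups are one-dimensional, so the coset structure needed for (b) on the dual side is automatic.
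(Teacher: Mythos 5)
Your argument is essentially correct, but note that the paper offers no proof of this lemma at all: it is quoted verbatim as part of Corollary 11.6 of the cited reference, so there is nothing internal to compare against. Your route is a sensible self-contained derivation for the specific wedge product used in this paper (the $N$ versus $G/N$ construction of Proposition 2.6): the characterization ``$\widehat{N}\in A$ and every superclass outside $N$ is a union of $N$-cosets'' is the right invariant, it visibly dualizes via Lemma \ref{dual} to ``$\widehat{M}\in B$ with $M=N^{\perp}$ and every block of $\mathcal{X}_A$ outside $M$ is a union of $M$-cosets,'' and your observation that $\irr(G\mid\psi)$ is a single $M$-coset (because restriction $\irr(G)\to\irr(N)$ is a surjective homomorphism with kernel $M$ when $G$ is Abelian) is exactly what makes condition (b) transfer. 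Two caveats worth recording: first, your proof implicitly assumes $G$ Abelian (one-dimensional restrictions, and the very existence of the dual), which is fine here since duality is only defined in that setting, but the cited Corollary 11.6 concerns Hendrickson's general $\ast$-product, of which the paper's wedge product is a special case, so your argument establishes the lemma only in the form actually used in this paper; second, the ``routine closure check'' that the pushed-forward partition of $G/N$ is a supercharacter theory does need the observation that $\pi:\mathbb{C}G\to\mathbb{C}[G/N]$ carries $A$ onto the span of the $\widehat{\pi(K)}$, whose supports partition $G/N$ precisely because of condition (b); once that is said, closure under both products follows from Proposition \ref{algebraDef}, and uniqueness of the character partition (Proposition 2.2) finishes the identification with the wedge product.
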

See \cite{Hendconstruction} for details on the dual supercharacter theory, and \cite{SchurGalois} for duality in the Schur ring setting.

We note that $G=C_p \times C_2 \times C_2$ has the convenient property that $H,H' \leq G$ with $H \cong H'$ implies $G/H \cong G/H'$. It is now clear that if we describe the collection of all supercharacter theories $A$ of $G=C_p\times C_2 \times C_2$ such that $H \leq G$ and $\widehat{H}\in A$ then we also have a description of the collection of all supercharacter theories $B$ of $G$ with $\widehat{N} \in B$ where $N \cong G/H$ by considering the dual supercharacter theory and using any fixed isomorphism $G \rightarrow \irr(G)$. We first list all possible sets of proper nontrivial subgroups which can occur as unions of superclasses, excluding those sets which contain a complementary pair of subgroups as we will use separate arguments for them. It is a simple exercise to verify that this list is complete:
\begin{enumerate}
\item $C_2$
\item $C_2 \times C_2$
\item one $C_2$ and $C_2 \times C_2$
\item all three $C_2$ and $C_2 \times C_2$
\item $C_p$
\item $C_p \times C_2$
\item $C_p$ and $C_p \times C_2$
\item $C_p$ and all three $C_p \times C_2$
\item $C_2$ and the $C_p \times C_2$ containing it
\item $C_2$, $C_p$, and the $C_p \times C_2$ containing them
\item $C_2$, the $C_p \times C_2$ containing it, and $C_2\times C_2$.
\end{enumerate}
We now match these cases into pairs as in Lemma \ref{dual}:
\begin{enumerate}
\item[(i)] $
C_p\times C_2 \leftrightarrow C_2
$

\item[(ii)] $
C_p \leftrightarrow C_2\times C_2
$

\item[(iii)] $
C_p\times C_2 \text{ and } C_p \leftrightarrow C_2\times C_2 \text{ and one } C_2
$

\item[(iv)] $
C_p \text{ and all three } C_p \times C_2 \leftrightarrow C_2\times C_2 \text{ and all three } C_2
$

\item[(v)] $
C_2 \text{ and the } C_p\times C_2 \text{ which contains it } \leftrightarrow C_2 \text{ and the } C_p\times C_2 \text{ which contains it }
$

\item[(vi)] $
C_p, C_2, \text{ and the } C_p\times C_2 \text{ containing them } \leftrightarrow C_2, \text{ the } C_p\times C_2 \text{ containing it, and } C_2\times C_2.
$
\end{enumerate}
By the above argument, we only need to consider one of the cases in each corresponding pair. We shall handle Cases (ii), (iii), (iv), and (vi) in one argument, by assuming that $\widehat{C_p} \in A$. The remaining two cases, (i) and (v), we shall argue by assuming that one copy of $C_p\times C_2$ satisfies $\widehat{C_p\times C_2} \in A$, and that the $C_2$ contained in it is the only other proper nontrivial subgroup $H$ which may satisfy $\widehat{H}\in A$.

We shall organize our argument as follows. (i) and (v) will be considered in Case 1. (ii), (iii), (iv), (vi) will be in Case 2 where we will assume $\widehat{C_p} \in A$ and $\widehat{C_2\times C_2} \notin A$. The situation where there exists a complementary pair $H_1$ and $H_2$ with $\widehat{H_1},\widehat{H_2} \in A$ will be considered in Cases 3 and 4. We will conclude our proof in section \ref{p2}, where we consider the case when $G=(C_2)^3$.

We introduce some more notation. Let $C_p=\langle \cpgen \rangle$ and $C_2\times C_2=\{ e,\kleingenone,\kleingentwo,\kleingenthree \}$. We fix a primitive $p$th root of unity $\rootofone$. We define $\cpgenchar \in \irr(C_p\times C_2\times C_2)$ to be the irreducible character defined by $\cpgenchar(\cpgen)=\rootofone$, $\cpgenchar(\kleingenone)=\cpgenchar(\kleingentwo)=\cpgenchar(\kleingenthree)=1$. We let $\kleingenonechar$, $\kleingentwochar$, and $\kleingenthreechar$ be the irreducible characters defined by $\psi_x(\cpgen)=1$, $\psi_x(x)=1$, $\psi_x(y)=-1$ for all $x,y \in \{\kleingenone,\kleingentwo,\kleingenthree\}$ with $x,y$ distinct. We will let $A$ denote a supercharacter theory of $C_p\times (C_2\times C_2)$. If $G=H_1\times H_2$ then for $K\subset G$ and $h\in H_2$ we define $\Mset{K}{h}=\{g\in H_1|gh\in K\}$. Hence 
\begin{equation}
K=\bigcup_{h\in H_2}\Mset{K}{h}h.
\end{equation}
See Example \ref{exwithM}. Similarly if $\irr(G)=H_1\times H_2$, and $\chi \in H_2$, then for a supercharacter $\theta=\sigma_X$ we will define $\Mset{\theta}{\chi}=\{\psi\in H_1 |\psi \chi \in X \}$. Note that $\psi \in \Mset{\theta}{\chi}$ is equivalent to $\langle \theta,\psi \chi \rangle\not= 0$. Finally, we note the following observation about $\rootofone$:
\begin{rmrk}\label{keyunity}
Since $1+t+t^2+\ldots+t^{p-1}$ is the minimal polynomial of $\rootofone$ over $\mathbb{Q}$ we have that if $\ell \not \equiv 0 \pmod p$ and $\displaystyle \sum_{i=1}^{p-1}{\rootofone^{i\ell}}z_i \in \mathbb{Z}$ where $z_i \in \mathbb{Q}$, then for all $i,j \not \equiv 0 \pmod p$ $z_i=z_j$.
\end{rmrk}
\section{Case 1}

We begin with the case where $\widehat{C_p\times C_2} \in A$ and the $C_2$ subgroup contained in this copy of $C_p\times C_2$ is the only other nontrivial proper subgroup $H$ which may have $\widehat{H}\in A$. This corresponds to (i) and (v) above. Without loss of generality we let $\widehat{\langle \cpgen \rangle \times \langle \kleingenone \rangle} \in A$ and $\langle \kleingenone \rangle$ the only other proper nontrivial subgroup $H$ which may have $\widehat{H}\in A$. By Lemma \ref{dual} this is equivalent to $\{\kleingenonechar\}\in\mathcal{X}_A$ and $\langle \cpgenchar ,\kleingenonechar\rangle$ is the only other proper nontrivial subgroup of $\irr(G)$ which may be a union of elements of $\mathcal{X}_A$. $\{\kleingentwo\}$ is not a superclass so there exists $x \not= \kleingentwo$ such that $x \in \superclass{\kleingentwo}$. Since $\widehat{\langle\cpgen\rangle \times \langle \kleingenone \rangle} \in A$ $x \not= \cpgen^\ell, \cpgen^\ell\kleingenone$ for any $\ell$. Therefore $x=\kleingentwo\cpgen^\ell$ or $x=\kleingenthree\cpgen^\ell$ for some $\ell$. Similarly $\kleingentwo\cpgen^k\in\superclass{\kleingenthree}$ or $\kleingenthree\cpgen^k\in\superclass{\kleingenthree}$ for some $k$. $\{\kleingentwo,\kleingenthree\}$ a superclass implies that $\widehat{\langle \kleingentwo,\kleingenthree \rangle} \in A$ which contradicts our assumption. So we may choose $\ell,k\not \equiv 0 \pmod p$. Let $\theta \not= \kleingenonechar, \operatorname{triv}$ be a supercharacter. We will express $\irr(G)$ as $H_1\times H_2$ where $H_1=\langle \kleingenonechar,\kleingentwochar\rangle$ and $H_2=\langle \cpgenchar \rangle$. Hence 
\begin{equation}
\theta=\sum_{i=0}^{p-1}{\Msum{\theta}{\cpgenchar^i}\cpgenchar^i}.
\end{equation}
Since $\theta(\kleingentwo)=\theta(x)$ we have:
\begin{equation}
\sum_{i=0}^{p-1}{\Msum{\theta}{\cpgenchar^i}(\kleingentwo)} = \sum_{i=0}^{p-1}{\rootofone^{\ell i}\Msum{\theta}{\cpgenchar^i}(x)}
\end{equation}
\begin{equation}
\left(\sum_{i=0}^{p-1}{\Msum{\theta}{\cpgenchar^i}(\kleingentwo)}\right)-\Msum{\theta}{\cpgenchar^0}(x) = \sum_{i=1}^{p-1}{\rootofone^{\ell i}\Msum{\theta}{\cpgenchar^i}(x)}.
\end{equation}
The LHS of the equation above is an integer, therefore by Remark \ref{keyunity} for all $i,j \not \equiv 0 \pmod p$:
\begin{equation}
\Msum{\theta}{\cpgenchar^i}(x)=\Msum{\theta}{\cpgenchar^j}(x)
\end{equation}
\begin{equation}\label{case1maineq}
\left(\sum_{i=0}^{p-1}{\Msum{\theta}{\cpgenchar^i}(\kleingentwo)}\right)-\Msum{\theta}{\cpgenchar^0}(x) = -\Msum{\theta}{\cpgenchar}(x).
\end{equation}

\textbf{Case 1a:}

We begin with the situation where $\kleingentwo \cpgen^\ell \in \superclass{\kleingentwo}$ and $\kleingentwo \cpgen^k \in \superclass{\kleingenthree}$ for some $\ell, k \not \equiv 0 \pmod p$, and further $\kleingenthree \cpgen^u \notin \superclass{\kleingentwo}$ for any $u\not \equiv 0 \pmod p$ and $\kleingenthree \cpgen^v \notin \superclass{\kleingenthree}$ for any $v\not \equiv 0 \pmod p$. Note that for all $i,r$
\begin{equation}
\Msum{\theta}{\cpgenchar^i}(\kleingentwo)=\Msum{\theta}{\cpgenchar^i}(\kleingentwo\cpgen^r).
\end{equation}
Then we have:
\begin{equation}
\left(\sum_{i=0}^{p-1}{\Msum{\theta}{\cpgenchar^i}(\kleingentwo)}\right)-\Msum{\theta}{\operatorname{triv}}(\kleingentwo)=-\Msum{\theta}{\cpgenchar}(\kleingentwo)
\end{equation}
\begin{equation}
\sum_{i=1}^{p-1}{\Msum{\theta}{\cpgenchar^i}(\kleingentwo)}=-\Msum{\theta}{\cpgenchar}(\kleingentwo)
\end{equation}
\begin{equation}
(p-1)\Msum{\theta}{\cpgenchar}(\kleingentwo)=-\Msum{\theta}{\cpgenchar}(\kleingentwo)
\end{equation}
\begin{equation}
\Msum{\theta}{\cpgenchar}(\kleingentwo)=0.
\end{equation}
Hence for all $i\not \equiv 0 \pmod p$
\begin{equation}
\Msum{\theta}{\cpgenchar^i}(\kleingentwo)=0.
\end{equation}
Then we conclude that for all $r$
\begin{equation}
\theta(\kleingentwo \cpgen^r)=\sum_{i=0}^{p-1}{\rootofone^{ri}\Msum{\theta}{\cpgenchar^i}(\kleingentwo)}=\Msum{\theta}{\operatorname{triv}}(\kleingentwo).
\end{equation}
So $\theta$ is constant on $\{\kleingentwo,\kleingentwo \cpgen, \ldots,\kleingentwo\cpgen^{p-1}\}$. For all $r$, $\kleingenonechar(\kleingentwo \cpgen^r)=-1$, so $\kleingenonechar$ is also constant on $\{\kleingentwo,\kleingentwo \cpgen, \ldots,\kleingentwo\cpgen^{p-1}\}$. Therefore every supercharacter is constant on the set $\{\kleingentwo,\kleingentwo \cpgen, \ldots,\kleingentwo\cpgen^{p-1}\}$, hence it is a subset of a superclass. Since $\kleingentwo \cpgen^\ell \in \superclass{\kleingentwo}$ and $\kleingentwo \cpgen^k \in \superclass{\kleingenthree}$, $\{\kleingenthree,\kleingentwo,\kleingentwo \cpgen, \ldots,\kleingentwo\cpgen^{p-1}\}$ is a subset of a superclass. By our Case 1a assumption, this must be a superclass. This would imply
\begin{equation}
\left(\kleingenthree+\sum_{i=0}^{p-1}{\kleingentwo \cpgen^i}\right)^2=e+2\sum_{i=0}^{p-1}{\kleingenone \cpgen^i}+p\sum_{i=0}^{p-1}{\cpgen^i}\in A.
\end{equation}
Since $p\not=2$, this implies $\widehat{\langle \cpgen \rangle}\in A$, which contradicts our assumption for Case 1. Similarly, we have a contradiction in the case of $\kleingenthree \cpgen^\ell \in \superclass{\kleingentwo}$, $\kleingenthree \cpgen^k \in \superclass{\kleingenthree}$ for some $\ell, k\not \equiv 0 \pmod p$, $\kleingentwo \cpgen^u \notin \superclass{\kleingentwo}$ for any $u\not \equiv 0 \pmod p$ and $\kleingentwo \cpgen^v \notin \superclass{\kleingenthree}$ for any $v\not \equiv 0 \pmod p$. 

\textbf{Case 1b:}

By the above, we can choose $x,y \in \{\kleingentwo,\kleingenthree \}$ so that $x\not= y$, $x \cpgen^\ell \in \superclass{\kleingentwo}$, $\ell\not \equiv 0 \pmod p$ and $y \cpgen^k \in \superclass{\kleingenthree}$, $k\not \equiv 0 \pmod p$.

Recall that for all $i,j \not \equiv 0 \pmod p$ $\Msum{\theta}{\cpgenchar^i}(\kleingentwo)=\Msum{\theta}{\cpgenchar^j}(\kleingentwo)$ and $\Msum{\theta}{\cpgenchar^i}(\kleingenthree)=\Msum{\theta}{\cpgenchar^j}(\kleingenthree)$. Then Equation (\ref{case1maineq})
becomes
\begin{equation}
(p-1)\Msum{\theta}{\cpgenchar}(\kleingentwo)+\Msum{\theta}{\operatorname{triv}}(\kleingentwo)-\Msum{\theta}{\operatorname{triv}}(x)=-\Msum{\theta}{\cpgenchar}(x).
\end{equation}
Similarly we have
\begin{equation}
(p-1)\Msum{\theta}{\cpgenchar}(\kleingenthree)+\Msum{\theta}{\operatorname{triv}}(\kleingenthree)-\Msum{\theta}{\operatorname{triv}}(y)=-\Msum{\theta}{\cpgenchar}(y).
\end{equation}

If $x=\kleingentwo$ and $y=\kleingenthree$, then $\Msum{\theta}{\cpgenchar}(\kleingentwo)=0$ and $\Msum{\theta}{\cpgenchar}(\kleingenthree)=0$. If $x=\kleingenthree$ and $y=\kleingentwo$ then
\begin{equation}\label{case1bv1eq}
(p-1)\Msum{\theta}{\cpgenchar}(\kleingentwo)+\Msum{\theta}{\operatorname{triv}}(\kleingentwo)-\Msum{\theta}{\operatorname{triv}}(\kleingenthree)+\Msum{\theta}{\cpgenchar}(\kleingenthree)=0,
\end{equation}
\begin{equation}\label{case1bv2eq}
(p-1)\Msum{\theta}{\cpgenchar}(\kleingenthree)+\Msum{\theta}{\operatorname{triv}}(\kleingenthree)-\Msum{\theta}{\operatorname{triv}}(\kleingentwo)+\Msum{\theta}{\cpgenchar}(\kleingentwo)=0.
\end{equation}
Adding these equations gives
\begin{equation}
p\Msum{\theta}{\cpgenchar}(\kleingentwo)+p\Msum{\theta}{\cpgenchar}(\kleingenthree)=0
\end{equation}
\begin{equation}\label{case1negeq}
\Msum{\theta}{\cpgenchar}(\kleingentwo)=-\Msum{\theta}{\cpgenchar}(\kleingenthree).
\end{equation}
We want to show that we cannot have $\Msum{\theta}{\operatorname{triv}} \in \{ \kleingentwochar, \kleingenthreechar \}$. Without loss of generality we assume that $\Msum{\theta}{\operatorname{triv}}=\kleingentwochar$ and $\Msum{\kleingenonechar \theta}{\operatorname{triv}}=\kleingenthreechar$. Then Equations (\ref{case1bv1eq}) and (\ref{case1bv2eq}) yield
\begin{align}
&(p-1)\Msum{\theta}{\cpgenchar}(\kleingentwo)+\Msum{\theta}{\cpgenchar}(\kleingenthree)=-2,
\\
&(p-1)\Msum{\theta}{\cpgenchar}(\kleingenthree)+\Msum{\theta}{\cpgenchar}(\kleingentwo)=2.
\end{align}
Hence by Equation (\ref{case1negeq}), $(p-2)\Msum{\theta}{\cpgenchar}(\kleingenthree)=2$. Since $\Msum{\theta}{\cpgenchar}(\kleingenthree)\in \mathbb{Z}$, we have $p=3$. Then $\Msum{\theta}{\cpgenchar}(\kleingenthree)=2$ implies $\Msum{\theta}{\cpgenchar}=\operatorname{triv}+\kleingenthreechar$. Then $\Msum{\theta}{\cpgenchar}(\kleingentwo)=0$ which  is a contradiction.
 
Hence we must have $\Msum{\theta}{\operatorname{triv}}=\kleingentwochar+\kleingenthreechar$ or $\Msum{\theta}{\operatorname{triv}}=0$, so Equations (\ref{case1bv1eq}) and (\ref{case1bv2eq}) yield
\begin{align}
&(p-1)\Msum{\theta}{\cpgenchar}(\kleingentwo)+\Msum{\theta}{\cpgenchar}(\kleingenthree)=0,
\\
&(p-1)\Msum{\theta}{\cpgenchar}(\kleingenthree)+\Msum{\theta}{\cpgenchar}(\kleingentwo)=0.
\end{align}
Subtracting gives
\begin{equation}
(p-2)\Msum{\theta}{\cpgenchar}(\kleingentwo)-(p-2)\Msum{\theta}{\cpgenchar}(\kleingenthree)=0
\end{equation}
\begin{equation}
\Msum{\theta}{\cpgenchar}(\kleingentwo)=\Msum{\theta}{\cpgenchar}(\kleingenthree).
\end{equation}
Using Equation (\ref{case1negeq}), we see that this implies $\Msum{\theta}{\cpgenchar}(\kleingentwo)=\Msum{\theta}{\cpgenchar}(\kleingenthree)=0$.
Further we see that $\Msum{\theta}{\operatorname{triv}}(\kleingentwo)=\Msum{\theta}{\operatorname{triv}}(\kleingenthree)=0$ as well, since $\Msum{\theta}{\operatorname{triv}}=0$ or $\Msum{\theta}{\operatorname{triv}}= \kleingentwochar+\kleingenthreechar$. Recalling that for all $i,j \not \equiv 0 \pmod p$ $\Msum{\theta}{\cpgenchar^i}(\kleingentwo)=\Msum{\theta}{\cpgenchar^j}(\kleingentwo)$ and $\Msum{\theta}{\cpgenchar^i}(\kleingenthree)=\Msum{\theta}{\cpgenchar^j}(\kleingenthree)$, we can now conclude that for any $r$
\begin{equation}
\theta(\cpgen^r \kleingentwo)=\sum_{i=0}^{p-1}{\rootofone^{ir}\Msum{\theta}{\cpgenchar^i}(\kleingentwo)}=0,
\end{equation}
\begin{equation}
\theta(\cpgen^r \kleingenthree)=\sum_{i=0}^{p-1}{\rootofone^{ir}\Msum{\theta}{\cpgenchar^i}(\kleingenthree)}=0.
\end{equation}
Since $\kleingenonechar(\cpgen^r \kleingentwo)=\kleingenonechar(\cpgen^r \kleingenthree)=-1$, we have that 
\begin{equation}
\{\kleingentwo, \cpgen\kleingentwo,\ldots,\cpgen^{p-1}\kleingentwo,\kleingenthree,\cpgen\kleingenthree,\ldots,\cpgen^{p-1}\kleingenthree\}
\end{equation}
is a subset of a superclass. Since $\widehat{\langle \cpgen \rangle\times \langle \kleingenone\rangle} \in A$, we see that the above must in fact be a superclass. Therefore this supercharacter theory is a wedge product of a supercharacter theory for $C_p \times C_2$ and a supercharacter theory for the quotient $(C_p\times C_2 \times C_2)/(C_p \times C_2)$. Since the quotient group is isomorphic to $C_2$, there is only one choice of a supercharacter theory for it. Hence this supercharacter theory is completely determined by the choice of a supercharacter theory for $C_p \times C_2$. This completes the classification in the case $\widehat{\langle \cpgen \rangle \times \langle \kleingenone \rangle} \in A$ and $\langle \kleingenone \rangle$ is the only other proper nontrivial subgroup $H$ which may have $\widehat{H}\in A$.

\section{Case 2}

We now consider the case where $\widehat{C_p} \in A$ and $\widehat{C_2\times C_2} \notin A$. By Lemma \ref{dual} this is equivalent to $\kleingenonechar+\kleingentwochar+\kleingenthreechar$ is a sum of supercharacters, and $\widehat{\langle \cpgenchar \rangle}$ is not a sum of supercharacters. 

\textbf{Case 2a:}

We begin by assuming there is a $\widehat{C_2} \in A$, and without loss of generality we let it be $\langle \kleingenone \rangle$ which implies by Lemma \ref{dual} that $\widehat{\langle \kleingenonechar,\cpgenchar \rangle}$ is a sum of supercharacters. Since $\langle \kleingenonechar,\cpgenchar \rangle\cap \{\kleingenonechar,\kleingentwochar,\kleingenthreechar\}=\{\kleingenonechar\}$, $\kleingenonechar$ is a supercharacter. Since $\widehat{C_2\times C_2} \notin A$, we know that $\{\kleingentwo\}, \{\kleingenthree\}$, and $\{\kleingentwo,\kleingenthree\}$ are not superclasses. Since $\widehat{\langle \cpgen, \kleingenone \rangle} \in A$, we have that $x\cpgen^\ell \in \superclass{\kleingentwo}$ for some $x \in \{\kleingentwo,\kleingenthree\}$ and $\ell\not \equiv 0 \pmod p$.

Let $\theta$ be a supercharacter which is a sum of irreducible characters in $\langle \cpgenchar, \kleingenonechar \rangle$. Since $\kleingenonechar$ is a supercharacter, $\Mset{\theta}{\operatorname{triv}}=0$. Then 
\begin{equation}
\theta(\kleingentwo)=\theta(x\cpgen^\ell)=\sum_{i=1}^{p-1}{\rootofone^{i\ell}\Msum{\theta}{\cpgenchar^i}(x)}.
\end{equation}
Since $\theta(\kleingentwo)\in \mathbb{Z}$, we conclude by Remark \ref{keyunity} that for all $i, j\not \equiv 0 \pmod p$,
\begin{equation}
\Msum{\theta}{\cpgenchar^i}(x)=\Msum{\theta}{\cpgenchar^j}(x).
\end{equation}
$\Mset{\theta}{\cpgenchar^i}=0$, $\operatorname{triv}$, $\kleingenonechar$, or $\operatorname{triv}+\kleingenonechar$, so $\Msum{\theta}{\cpgenchar^i}(x)=0$, $1$, or $-1$. If $\Msum{\theta}{\cpgenchar^i}(x)=1$, then $\Msum{\theta}{\cpgenchar^i}=\operatorname{triv}$. If $\Msum{\theta}{\cpgenchar^i}(x)=-1$, then $\Msum{\theta}{\cpgenchar^i}=\kleingenonechar$. If $\Msum{\theta}{\cpgenchar^i}(x)=0$ then $\Msum{\theta}{\cpgenchar^i}=0$ or $\Msum{\theta}{\cpgenchar^i}=\operatorname{triv} + \kleingenonechar$. If $\Msum{\theta}{\cpgenchar}=\operatorname{triv}$, then $\theta =\cpgenchar+\ldots +\cpgenchar^{p-1}$, which is a contradiction as $\cpgenchar+\ldots +\cpgenchar^{p-1}$ a sum of supercharacters is equivalent to $C_2\times C_2$ a union of superclasses. If $\Msum{\theta}{\cpgenchar}=\kleingenonechar$, then $\theta=\kleingenonechar(\cpgenchar+\ldots +\cpgenchar^{p-1})$, so again we have a contradiction. Therefore we conclude that for every $i$ either $\Msum{\theta}{\cpgenchar^i}=0$ or $\Msum{\theta}{\cpgenchar^i}=\operatorname{triv} + \kleingenonechar$. Hence for all $r=0,\ldots, p-1$ we have
\begin{equation}
\theta(\kleingentwo \cpgen^r)=\theta(\kleingenthree \cpgen^r)=0.
\end{equation}

Let $\tilde{\theta}$ be a supercharacter which is a sum of irreducible characters each of which is contained in $\{\kleingentwochar \cpgenchar, \kleingentwochar \cpgenchar^2, \ldots, \kleingentwochar \cpgenchar^{p-1}, \kleingenthreechar \cpgenchar, \kleingenthreechar \cpgenchar^2, \ldots, \kleingenthreechar \cpgenchar^{p-1}\}$. $\Mset{\tilde{\theta}}{\operatorname{triv}}=0$, and we have similarly:
\begin{equation}
\tilde{\theta}(b)=\tilde{\theta}(x\cpgen^\ell)=\sum_{i=1}^{p-1}{\rootofone^{i\ell}\Msum{\tilde{\theta}}{\cpgenchar^i}(x)}.
\end{equation}
$\tilde{\theta}(b) \in \mathbb{Z}$, so by Remark \ref{keyunity} for all $i,j$,
\begin{equation}
\Msum{\tilde{\theta}}{\cpgenchar^i}(x)=\Msum{\tilde{\theta}}{\cpgenchar^j}(x).
\end{equation}
If $\tilde{\theta}$ is a sum of irreducible characters contained in $\langle \cpgenchar, \kleingentwochar \rangle$, then $\widehat{\langle \cpgenchar, \kleingentwochar \rangle}$ is a sum of supercharacters which by Lemma \ref{dual} contradicts the assumption that $\{\kleingentwo\}$ isn't a superclass. Similarly $\tilde{\theta}$ is not a sum of characters contained in $\langle \cpgenchar, \kleingenthreechar \rangle$ because $\{\kleingenthree\}$ isn't a superclass. Therefore for all $i$ either $\Msum{\tilde{\theta}}{\cpgenchar^i}=0$ or $\Msum{\tilde{\theta}}{\cpgenchar^i}=\kleingentwochar+\kleingenthreechar$. Hence for all $r=0,\ldots, p-1$ we have
\begin{equation}
\tilde{\theta}(\kleingentwo \cpgen^r)=\tilde{\theta}(\kleingenthree \cpgen^r)=0.
\end{equation}

Recall that $\langle \cpgen \rangle$ a union of superclasses is equivalent to $\kleingenonechar+\kleingentwochar+\kleingenthreechar$ is a sum of supercharacters, and $\langle \cpgen, \kleingenone \rangle$ a union of superclasses is equivalent to $\kleingenonechar$ a supercharacter. Since neither $\langle \cpgen, \kleingentwo \rangle$ or $\langle \cpgen, \kleingenthree \rangle$ is a union of superclasses, neither $\kleingentwochar$ or $\kleingenthreechar$ is a supercharacter. Therefore $\kleingenonechar$ and $\kleingentwochar+\kleingenthreechar$ are both supercharacters.

We now are able to conclude that all supercharacters are constant on the set
\begin{equation}
\{\kleingentwo,\kleingentwo \cpgen,\ldots,\kleingentwo \cpgen^{p-1},\kleingenthree,\kleingenthree \cpgen,\ldots, \kleingenthree \cpgen^{p-1}\}
\end{equation}
and we conclude that it is a superclass. Since it is a $\langle \cpgen, \kleingenone \rangle$-coset, we conclude that $A$ is a wedge product  as in Case 1.

\textbf{Case 2b:}

We now assume that there is no $C_2$ with $\widehat{C_2} \in A$. We want to show that every superclass disjoint from $\langle \cpgen \rangle$ is a union of $\langle \cpgen \rangle$-cosets.
Since $C_2\times C_2 \notin A$, $\{\kleingenone,\kleingentwo,\kleingenthree\}$ is not a superclass, and by assumption $\{\kleingenone\},\{\kleingentwo\}$ and $\{\kleingenthree\}$ are not superclasses either. If $\{\kleingenone,\kleingentwo\}$ is a superclass, then $\widehat{\langle \kleingenone,\kleingentwo \rangle} \in A$ which contradicts $\{\kleingenone,\kleingentwo,\kleingenthree \}$ not a superclass. Similarly $\{\kleingenone,\kleingenthree\}$ and $\{\kleingentwo,\kleingenthree\}$ are not superclasses. Therefore we conclude that if $x \in \{\kleingenone,\kleingentwo,\kleingenthree\}$, then there exists $y \in \{\kleingenone,\kleingentwo,\kleingenthree\}$ and $\ell \not \equiv 0 \pmod p$ such that $y\cpgen^\ell \in \superclass{x}$. Let $\theta \not= \operatorname{triv}$ be a supercharacter which is not a summand of $\kleingenonechar+\kleingentwochar+\kleingenthreechar$. Then
\begin{equation}
\theta(x)=\theta(y\cpgen^\ell)
\end{equation}
\begin{equation}\label{thetaeqexpanded}
\sum_{i=1}^{p-1}{\Msum{\theta}{\cpgenchar^i}(x)}=\sum_{i=0}^{p-1}{\Msum{\theta}{\cpgenchar^i}(x)}=\sum_{i=0}^{p-1}{\rootofone^{\ell i}\Msum{\theta}{\cpgenchar^i}(y)}=\sum_{i=1}^{p-1}{\rootofone^{\ell i}\Msum{\theta}{\cpgenchar^i}(y)}.
\end{equation}
Since $\theta(x)\in \mathbb{Z}$, by Remark \ref{keyunity} we see that for all $i\not\equiv 0 \pmod p$ 
\begin{equation}\label{constoncp}
\theta(y\cpgen^i)=-\Msum{\theta}{\cpgenchar}(y).
\end{equation}
In particular, note that this implies $\theta$ is constant on $\{y\cpgen,y\cpgen^2,\ldots,y\cpgen^{p-1}\}$. For a superclass $K$ let $I_K=\{x\in C_2\times C_2 | \exists \ell \text{ such that } x\cpgen^\ell \in K\}$. Given superclasses $K,K'$ which are not subsets of $\langle \cpgen \rangle$, we claim that $I_K$ and $I_{K'}$ are either equal or disjoint. Suppose they are not disjoint. Then we have
\begin{equation}
\widehat{K}\widehat{\langle \cpgen \rangle}=\sum_{x\in K}{\sum_{i=0}^{p-1}{x\cpgen^i}} \in A
\end{equation}
since $\widehat{K'}$ must be a summand, this implies $I_{K'}\subset I_K$ . However, by considering $\widehat{K'}\widehat{\langle \cpgen \rangle}$ we see that $I_K \subset I_{K'}$ which proves the claim.

There are three cases for the classes $K$ disjoint from $\langle \cpgen \rangle$: 
\begin{enumerate}
\item $|I_K|=1$ for all $K$, 

\item there exists $z \in \{\kleingenone,\kleingentwo,\kleingenthree\}$ such that for every $K$ either $I_K=\{z\}$ or $I_K=\{\kleingenone,\kleingentwo,\kleingenthree\}\setminus \{z\}$,

\item $I_K=\{\kleingenone,\kleingentwo,\kleingenthree\}$ for all $K$.
\end{enumerate}
If $|I_K|=1$ for all $K$ then there must exist $i\not\equiv 0 \pmod p$ such that $\kleingenone \cpgen^i \in \superclass{\kleingenone}$. Then we have Equation (\ref{constoncp}) for $y=\kleingenone$, and hence $\{\kleingenone,\kleingenone \cpgen, \ldots, \kleingenone \cpgen^{p-1}\}$ is a subset of a superclass. Similarly we have that $\{\kleingentwo,\kleingentwo \cpgen,\ldots, \kleingentwo \cpgen^{p-1}\}$ and $\{\kleingenthree,\kleingenthree \cpgen, \ldots, \kleingenthree \cpgen^{p-1}\}$ are subsets of superclasses. Clearly they are in fact superclasses.

For the case where $I_K=\{z\}$ or $I_K=\{\kleingenone,\kleingentwo,\kleingenthree\}\setminus \{z\}$, without loss of generality assume $z=\kleingenone$. Then as above there must exist $i\not\equiv 0 \pmod p$ such that $\kleingenone \cpgen^i \in \superclass{\kleingenone}$, and again we have $\{\kleingenone,\kleingenone \cpgen, \ldots, \kleingenone \cpgen^{p-1}\}$ is a superclass. There must exist $\beta \in \{\kleingentwo,\kleingenthree\}$ and $j\not\equiv 0 \pmod p$ such that $\beta \cpgen^j \in \superclass{\kleingentwo}$. If $\beta =\kleingentwo$ then by Equation (\ref{constoncp}) $\{\kleingentwo,\kleingentwo \cpgen,\ldots, \kleingentwo \cpgen^{p-1}\}\subset \superclass{\kleingentwo}$. Therefore if $\kleingentwo \in I_K$, then $K=\superclass{\kleingentwo}$ so we see that $\{\kleingentwo,\kleingentwo \cpgen,\ldots, \kleingentwo \cpgen^{p-1},\kleingenthree,\kleingenthree \cpgen, \ldots, \kleingenthree \cpgen^{p-1}\}$ is a superclass. If $\beta =\kleingenthree$ then $\kleingenthree \cpgen^j \in \superclass{\kleingentwo}$ so by Equation (\ref{constoncp}) $\{\kleingentwo,\kleingenthree \cpgen,\ldots,\kleingenthree \cpgen^{p-1}\} \subset \superclass{\kleingentwo}$. There also exists $t\not\equiv 0 \pmod p$ such that $\kleingentwo \cpgen^t \in \superclass{\kleingenthree}$, so $\{\kleingenthree,\kleingentwo \cpgen,\ldots,\kleingentwo \cpgen^{p-1}\}\subset \superclass{\kleingenthree}$. We are then able to combine Equations (\ref{thetaeqexpanded}) and (\ref{constoncp}) twice, once with $x=\kleingentwo, y=\kleingenthree$ and once with $x=\kleingenthree, y=\kleingentwo$:
\begin{equation}
(p-1)\Msum{\theta}{\cpgenchar}(\kleingentwo)=-\Msum{\theta}{\cpgenchar}(\kleingenthree),
\end{equation}
\begin{equation}
(p-1)\Msum{\theta}{\cpgenchar}(\kleingenthree)=-\Msum{\theta}{\cpgenchar}(\kleingentwo).
\end{equation}
We then have
\begin{equation}
-(p-1)^2\Msum{\theta}{\cpgenchar}(\kleingentwo)=-\Msum{\theta}{\cpgenchar}(\kleingentwo).
\end{equation}
Therefore
\begin{equation}
\Msum{\theta}{\cpgenchar}(\kleingentwo)=\Msum{\theta}{\cpgenchar}(\kleingenthree)=0.
\end{equation}
We conclude that $\superclass{\kleingentwo}=\superclass{\kleingenthree}$, or equivalently $\{\kleingentwo,\kleingentwo\cpgen,\ldots,\kleingentwo\cpgen^{p-1},\kleingenthree,\kleingenthree\cpgen,\ldots,\kleingenthree\cpgen^{p-1}\}$ is a superclass.

Finally, we consider when $I_K=\{\kleingenone, \kleingentwo, \kleingenthree\}$ for all $K$. In this case we want to show that 
\begin{equation}
G\setminus \langle \cpgen \rangle=\{\kleingenone,\kleingenone \cpgen,\ldots,\kleingenone \cpgen^{p-1},\kleingentwo,\kleingentwo\cpgen,\ldots,\kleingentwo\cpgen^{p-1},\kleingenthree,\kleingenthree\cpgen,\ldots,\kleingenthree\cpgen^{p-1}\}
\end{equation}
is a superclass.
There exists $x\in\{\kleingenone,\kleingentwo,\kleingenthree\}$ and $i\not\equiv 0 \pmod p$ such that $x\cpgen^i \in \superclass{\kleingenone}$. If $x=\kleingenone$ then by Equation (\ref{constoncp}) $\{\kleingenone,\kleingenone \cpgen,\ldots,\kleingenone \cpgen^{p-1}\}\subset \superclass{\kleingenone}$. Since $\kleingenone \in I_K$ implies $K=\superclass{\kleingenone}$ we must have $G\setminus \langle \cpgen \rangle$ is a superclass. We now suppose that $x\not= \kleingenone$ and without loss of generality suppose that $x=\kleingentwo$. Then by Equation (\ref{constoncp}) with $y=\kleingentwo$ we have $\{\kleingenone,\kleingentwo \cpgen,\ldots,\kleingentwo \cpgen^{p-1}\}\subset \superclass{\kleingenone}$. There exists $j$ such that $\kleingenone \cpgen^j \in \superclass{\kleingentwo}$. If $j\equiv 0 \pmod p$, then $\superclass{\kleingenone}=\superclass{\kleingentwo}$. $\kleingentwo \in I_K$ implies $K=\superclass{\kleingenone}$ so $G\setminus \langle \cpgen \rangle$ is a superclass. If $j\not\equiv 0 \pmod p$, then by Equation (\ref{constoncp}) $\{\kleingentwo,\kleingenone\cpgen,\ldots,\kleingenone\cpgen^{p-1}\}\subset \superclass{\kleingentwo}$. As above, we can combine \ref{thetaeqexpanded} and \ref{constoncp} twice to conclude that $\superclass{\kleingenone}=\superclass{\kleingentwo}$. Since $\kleingenone \in I_K$ implies $K=\superclass{\kleingenone}$ we again have $G\setminus \langle \cpgen \rangle$ is a superclass.

We now see that every superclass disjoint from $\langle \cpgen \rangle$ is a union of $\langle \cpgen \rangle$-cosets. Therefore we see that $A$ is a wedge product of a supercharacter theory for $C_p$ and a supercharacter theory for $(C_p\times C_2\times C_2)/C_p$. This supercharacter theory is determined by a choice of the supercharacter theory for $C_p$ and a choice of supercharacter theory for $\irr(C_2\times C_2)$.

\section{Complementary Subgroups}

We now consider the cases where there exist complementary subgroups which are unions of superclasses.
We shall first need a few lemmas about supercharacter theories of $C_p=\langle \cpgen \rangle$. Recall that $\rootofone$ is a chosen primitive $p$th root of unity.
\begin{lemma}
\label{fromauto}
\cite[Lemma 6.9]{Hendthesis} If $p$ is prime, then every supercharacter theory of $C_p$ is generated by automorphisms.
\end{lemma}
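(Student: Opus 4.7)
The plan is to use Galois theory of the cyclotomic field $\mathbb{Q}(\rootofone)$ to identify the superclasses of an arbitrary supercharacter theory $A$ of $C_p$ with the orbits of a subgroup of $\aut(C_p) \cong (\mathbb{Z}/p\mathbb{Z})^{\times}$. Write the superclasses as $\{e\}, K_1, \ldots, K_r$ with $K_s = \{\cpgen^j : j \in J_s\}$ for $s \geq 1$, and let $A_{\mathbb{Q}}$ denote the $\mathbb{Q}$-span of $\{e, \widehat{K_1}, \ldots, \widehat{K_r}\}$, which is a $\mathbb{Q}$-subalgebra of $\mathbb{Q}C_p$ under $\regprod$. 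Consider the $\mathbb{Q}$-algebra homomorphism $\varphi : \mathbb{Q}C_p \to \mathbb{Q}(\rootofone)$ determined by $\varphi(\cpgen) = \rootofone$, and set $\kappa_s = \varphi(\widehat{K_s}) = \sum_{j \in J_s} \rootofone^j$. Then $L := \varphi(A_{\mathbb{Q}})$ is a finite-dimensional $\mathbb{Q}$-subalgebra of the field $\mathbb{Q}(\rootofone)$, hence a subfield; by the Galois correspondence $L = \mathbb{Q}(\rootofone)^H$ for a unique subgroup $H \leq \operatorname{Gal}(\mathbb{Q}(\rootofone)/\mathbb{Q}) \cong (\mathbb{Z}/p\mathbb{Z})^{\times}$, with $[L:\mathbb{Q}] = (p-1)/|H|$.

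Next I would establish a dimension bound. Since $\sum_{s=0}^{r} \widehat{K_s} = \widehat{G}$ and $\varphi(\widehat{G}) = 0$, we have $1 + \sum_{s=1}^{r} \kappa_s = 0$, so $L$ is $\mathbb{Q}$-spanned by $\kappa_1, \ldots, \kappa_r$ and $[L:\mathbb{Q}] \leq r$. The key step is to extract a group action on $C_p$ from the fact that $\kappa_s \in L$. For each $a \in H$, applying $\sigma_a$ (which sends $\rootofone \mapsto \rootofone^a$) gives $\sum_{j \in J_s} \rootofone^{aj} = \sum_{j \in J_s} \rootofone^j$, which rearranges to $\sum_{k=0}^{p-1} c_k \rootofone^k = 0$ where $c_k \in \{-1, 0, 1\}$ records membership of $k$ in $aJ_s$ versus $J_s$. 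Remark \ref{keyunity} forces the $c_k$ to be constant in $k$, and since they sum to $|aJ_s| - |J_s| = 0$ they must all vanish; hence $aJ_s = J_s$, so each $J_s$ is a union of $H$-orbits on $(\mathbb{Z}/p\mathbb{Z})^{\times}$.

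To finish, $H$ acts freely by multiplication on $(\mathbb{Z}/p\mathbb{Z})^{\times}$, producing exactly $(p-1)/|H|$ orbits, so $r \leq (p-1)/|H| = [L:\mathbb{Q}] \leq r$. Equality throughout forces each $J_s$ to be a single $H$-orbit. Under the identification $(\mathbb{Z}/p\mathbb{Z})^{\times} \cong \aut(C_p)$ given by $a \leftrightarrow (\cpgen \mapsto \cpgen^a)$, the superclasses of $A$ are precisely the orbits of $H \leq \aut(C_p)$ on $C_p$, and $A$ is therefore generated by automorphisms. The main subtlety is the passage from $\kappa_s \in L$ to $aJ_s = J_s$ via Remark \ref{keyunity}; once that step is established the remainder of the argument is a formal dimension count.
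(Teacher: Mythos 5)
Your proof is correct, but there is nothing in the paper to compare it against: Lemma \ref{fromauto} is quoted from Hendrickson's thesis and the paper supplies no proof of it (the closest internal analogue is the direct proof of the weaker Lemma \ref{samesize}, which uses a minimal/maximal counting argument with roots of unity rather than Galois theory). Your argument stands on its own. The algebra $A_{\mathbb{Q}}$ is indeed a $\mathbb{Q}$-subalgebra of $\mathbb{Q}C_p$ (the structure constants of $\widehat{K_s}\regprod\widehat{K_t}$ in the basis $\{e,\widehat{K_1},\ldots,\widehat{K_r}\}$ are nonnegative integers), its image $L$ under $\cpgen\mapsto\rootofone$ is a finite-dimensional integral domain over $\mathbb{Q}$ containing $1$ and hence a subfield, and the key step --- passing from $\sigma_a(\kappa_s)=\kappa_s$ to $aJ_s=J_s$ via Remark \ref{keyunity} --- is sound, since the coefficient $c_0$ vanishes automatically and the common value of the remaining $c_k$ is forced to zero by $\sum_k c_k=|aJ_s|-|J_s|=0$. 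The sandwich $r\le (p-1)/|H|=[L:\mathbb{Q}]\le r$ then correctly pins each $J_s$ down to a single $H$-orbit. The one step you should make explicit is the very last sentence: showing that $\mathcal{K}_A$ coincides with the partition of $C_p$ into $H$-orbits identifies only the superclass half of $A$ with that of the theory generated by $H\le\aut(C_p)$; to conclude that $A$ \emph{is} that theory (i.e., that the character partitions also agree) you must invoke the uniqueness statement quoted in Section 2 from Diaconis and Isaacs, namely that two supercharacter theories with the same superclass partition have the same character partition. With that one-line addition the argument is complete, and it in fact yields Lemma \ref{samesize} and Remark \ref{autormrk} as immediate corollaries, which the paper instead establishes by a separate counting argument.
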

From this we have the following lemma. We will also provide a direct proof.
\begin{lemma}
\label{samesize}
Let $A$ be a supercharacter theory of $C_p$ where $p$ is prime. Then every element of $\mathcal{K}_A$ other than $\{e\}$ has the same size, and this is equal to the size of every element of $\mathcal{X}_A$ other than $\{\operatorname{triv}\}$.
\end{lemma}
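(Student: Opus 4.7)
The plan is to reduce everything to an orbit-counting argument by invoking Lemma \ref{fromauto}. Since $A$ is a supercharacter theory of $C_p$, that lemma provides a subgroup $H \leq \aut(C_p)$ whose orbits on $C_p$ are the superclasses in $\mathcal{K}_A$ and whose orbits on $\irr(C_p)$ are the elements of $\mathcal{X}_A$. The entire task then reduces to showing that such $H$ acts freely on both $C_p \setminus \{e\}$ and $\irr(C_p) \setminus \{\operatorname{triv}\}$.

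First I would recall that $\aut(C_p) \cong (\mathbb{Z}/p\mathbb{Z})^\times$, where the automorphism $\sigma_k$ sends $\cpgen \mapsto \cpgen^k$ for $k \in (\mathbb{Z}/p\mathbb{Z})^\times$. If $\sigma_k$ fixes some nonidentity element $\cpgen^j$, then $\cpgen^{jk} = \cpgen^j$, so $j(k-1) \equiv 0 \pmod{p}$. Since $p$ is prime and $j \not\equiv 0 \pmod{p}$, we must have $k \equiv 1$, so $\sigma_k$ is the identity. Hence $\aut(C_p)$, and in particular $H$, acts freely on $C_p \setminus \{e\}$. By the orbit-stabilizer theorem, every $H$-orbit on $C_p \setminus \{e\}$ has size exactly $|H|$, so every element of $\mathcal{K}_A \setminus \{\{e\}\}$ has size $|H|$.

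The same argument applies verbatim on the character side, since $\irr(C_p) \cong C_p$ as a group and the induced action of $\aut(C_p)$ on $\irr(C_p)$ (by $\sigma \cdot \chi = \chi \circ \sigma^{-1}$) is again just the standard action of $(\mathbb{Z}/p\mathbb{Z})^\times$ on a cyclic group of order $p$. Every nontrivial character is a generator, so its stabilizer in $\aut(C_p)$ is trivial, and every $H$-orbit on $\irr(C_p) \setminus \{\operatorname{triv}\}$ also has size $|H|$. Thus every element of $\mathcal{X}_A \setminus \{\{\operatorname{triv}\}\}$ has the same common size $|H|$, matching the nonidentity superclass size.

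Honestly, there is no real obstacle here once Lemma \ref{fromauto} is in hand: the single fact doing all the work is that in a cyclic group of prime order every nonidentity element is a generator, which forces the action of the automorphism group to be free off the identity. The only subtlety worth flagging is being explicit that the action on $\irr(C_p)$ is the same permutation action (up to inversion of indices) as on $C_p$ itself, so that the free-action observation transfers to the character side without modification.
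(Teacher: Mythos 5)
Your proof is correct, but it takes the route the paper only alludes to rather than the one it actually writes out. The paper remarks that the lemma follows from Lemma \ref{fromauto} and then deliberately supplies a \emph{direct} proof that avoids it: fix a supercharacter $\theta \neq \operatorname{triv}$ with a minimal number $r$ of irreducible summands, use the linear independence of $\{\rootofone, \ldots, \rootofone^{p-1}\}$ over $\mathbb{Q}$ to show that the $r$ summands of $\theta(\cpgen^k)$ are distinct roots of unity and hence that no superclass can exceed size $r$, and then squeeze $rm \leq p-1 \leq rm$ where $m$ is the number of nonidentity superclasses, forcing $rm = p-1$ and uniform sizes on both sides. Your argument instead imports Lemma \ref{fromauto} wholesale and reduces everything to the observation that $(\mathbb{Z}/p\mathbb{Z})^\times$ acts freely on the nonidentity elements of a group of prime order (and likewise on the nontrivial characters), so orbit-stabilizer gives all orbits size $|H|$. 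Both are valid; yours is shorter and conceptually cleaner but leans entirely on the cited classification of supercharacter theories of $C_p$, whereas the paper's version is self-contained and rehearses the root-of-unity counting technique that drives the rest of the paper's case analysis. One small thing worth noting in your favor: your argument also identifies the common size as $|H|$, which the paper only records afterward in Remark \ref{autormrk}.
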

\begin{proof}
Fix a supercharacter theory $A$ of $C_p$. Let $\theta\not= \operatorname{triv}$ be a supercharacter of $A$ with $r$ irreducible characters as summands such that $r$ is minimal. For $k\not\equiv 0 \pmod p$, $\theta(\cpgen^k)$ is a sum of $r$ distinct roots of unity. Let $\chi$ be an irreducible character which is a summand of $\theta$. Because $\{\rootofone,\rootofone^2,\ldots,\rootofone^{p-1}\}$ is linearly independent over $\mathbb{Q}$ we have $\chi(\cpgen^k)=\chi(\cpgen^v)$ iff $\cpgen^k=\cpgen^v$. Hence we see that $r$ is the maximal number of elements a superclass can contain. Let $m$ be the number of superclasses different from $\{e\}$, which is equal to the number of supercharacters different from $\operatorname{triv}$. Then since $r$ is the minimal number of irreducible characters which are summands of a nontrivial supercharacter we have $rm \leq p-1$. However, since $r$ is the maximal number of elements in a superclass we see that $p-1 \leq rm$. We conclude that $rm=p-1$. Hence every superclass other than $\{e\}$ has $r$ elements, and every supercharacter other than $\operatorname{triv}$ has $r$ irreducible characters as summands.
\end{proof}
\begin{rmrk}\label{autormrk}
The size of every superclass other than $\{e\}$ is the order of the group of automorphisms which generates the supercharacter theory. Further since $\aut(C_p)\cong C_{p-1}$ for $p$ prime, and a cyclic group has at most one subgroup of a given order, we see that the size of the superclasses determines the supercharacter theory. Also the dimension determines the supercharacter theory.
\end{rmrk}

\begin{lemma}
Let $\theta\not= \operatorname{triv}$ be a supercharacter for a supercharacter theory $A$ of $C_p$ where $p$ is prime. If $\chi_1\not= \chi_2$ are summands of $\theta$ and $x\not= e$, then there exists $y\in \superclass{x}$, $y\not= x$, such that $\chi_1(x)=\chi_2(y)$.
\end{lemma}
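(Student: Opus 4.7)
The plan is to exploit Lemma \ref{fromauto}, which provides a subgroup $H\leq\aut(C_p)$ whose orbits on $C_p$ are the superclasses of $A$ and whose orbits on $\irr(C_p)$ are the $X\in\mathcal{X}_A$. Under the natural action $(\sigma\cdot\chi)(g)=\chi(\sigma^{-1}(g))$, the assumption that $\chi_1$ and $\chi_2$ are summands of the same supercharacter $\theta$ means they lie in a common $H$-orbit, so there exists some $\sigma\in H$ with $\sigma\cdot\chi_1=\chi_2$.

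I would then set $y:=\sigma(x)$. Because $\sigma\in H$, the element $y$ lies in the $H$-orbit of $x$, so $y\in\superclass{x}$. The equality $\chi_2(y)=\chi_2(\sigma(x))=(\sigma^{-1}\cdot\chi_2)(x)=\chi_1(x)$ then follows from the definition of the action together with $\sigma\cdot\chi_1=\chi_2$.

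The remaining point is that $y\neq x$, and this is where the argument becomes essentially free. Since $\chi_1\neq\chi_2$ the automorphism $\sigma$ is not the identity in $\aut(C_p)\cong(\mathbb{Z}/p\mathbb{Z})^\times$, and this group acts freely on $C_p\setminus\{e\}$ because its action there is simply multiplication in $(\mathbb{Z}/p\mathbb{Z})^\times$. As $x\neq e$, this forces $\sigma(x)\neq x$, completing the argument. There is no real obstacle; the statement is essentially a bookkeeping consequence of the fact that $H$ acts compatibly on $C_p$ and on $\irr(C_p)$ via the duality $C_p\cong\irr(C_p)$.
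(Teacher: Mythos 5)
Your proof is correct, but it takes a genuinely different route from the paper's. You rely entirely on Lemma \ref{fromauto}: once $A$ is realized as the orbit theory of some $H\le\aut(C_p)$, the fact that $\chi_1$ and $\chi_2$ lie in the same block produces $\sigma\in H$ with $\sigma\cdot\chi_1=\chi_2$, and the single element $y=\sigma(x)$ settles everything, with $y\ne x$ coming from the freeness of the $(\mathbb{Z}/p\mathbb{Z})^\times$-action on $C_p\setminus\{e\}$. The paper instead argues by counting: by Lemma \ref{samesize} the superclass $\superclass{x}$ has exactly $r$ elements, where $r$ is the number of irreducible summands of $\theta$; since a nontrivial character of $C_p$ is injective on $C_p$, the character $\chi_2$ takes $r$ distinct values on $\superclass{x}$, and comparing these with the $r$ distinct roots of unity occurring as summands of $\theta$ on $\superclass{x}$ forces $\chi_1(x)$ to appear among the values $\chi_2(y)$ for $y\in\superclass{x}$. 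Your argument is shorter and makes the claim $y\ne x$ completely transparent, at the cost of invoking the full automorphism classification of supercharacter theories of $C_p$; the paper's argument needs only the equal-size statement (which it also proves directly) and is closer in spirit to the root-of-unity bookkeeping used throughout the rest of the paper. Both approaches are valid.
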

\begin{proof}
Suppose that $\theta$ is a sum of $r$ irreducible characters. Since $x\not= e$, $\theta(x)$ is a sum of $r$ distinct roots of unity. By Lemma \ref{samesize}, $|\superclass{x}|=r$. Since $\chi_2(x)=\chi_2(z)$ iff $x=z$, we see that there are $r$ distinct values that $\chi_2$ takes when evaluating elements of $\superclass{x}$. Hence there must be a $y\in \superclass{x}$ such that $\chi_2(y)=\chi_1(x)$.
\end{proof}
\begin{lemma}
\label{disjoint}
Let $\cpgen^\ell,\cpgen^k \in C_p$ for $p$ prime, and let $\theta \not= \operatorname{triv}$ be a supercharacter for some supercharacter theory $A$ of $C_p$. If $\cpgen^\ell \notin \superclass{\cpgen^k}$ then the set of roots of unity which appear as summands of $\theta(\cpgen^\ell)$ are disjoint from those which appear as summands of $\theta(\cpgen^k)$.
\end{lemma}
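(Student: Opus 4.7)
The plan is to leverage Lemma \ref{fromauto}: since every supercharacter theory of $C_p$ is generated by automorphisms, I can realize $A$ as coming from a subgroup $H \leq \aut(C_p) \cong (\mathbb{Z}/p\mathbb{Z})^\times$. Letting $T$ denote the corresponding multiplicative subgroup of $(\mathbb{Z}/p\mathbb{Z})^\times$, the nontrivial superclasses in $\mathcal{K}_A$ are precisely the cosets $\{\cpgen^{ks}:s\in T\}$, and the nontrivial blocks of $\mathcal{X}_A$ are analogously obtained from the dual action of $H$ on $\irr(C_p)$.

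Next I would describe $\theta$ concretely. Writing $\chi_j$ for the irreducible character sending $\cpgen$ to $\rootofone^j$, the block of $\mathcal{X}_A$ whose sum equals $\theta$ is a single $H$-orbit of the form $\{\chi_{as}:s\in T\}$ for some fixed $a\in\mathbb{Z}/p\mathbb{Z}$; nonzero because $\theta\neq\operatorname{triv}$. Evaluating gives
\begin{equation}
\theta(\cpgen^k)=\sum_{s\in T}\rootofone^{ask},
\end{equation}
and since $a,k\not\equiv 0\pmod p$ the map $s\mapsto ask\pmod p$ is a bijection from $T$ onto the coset $akT\subset(\mathbb{Z}/p\mathbb{Z})^\times$. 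Thus the distinct $p$th roots of unity appearing as summands of $\theta(\cpgen^k)$ are precisely $\{\rootofone^u:u\in akT\}$, and likewise the summands of $\theta(\cpgen^\ell)$ correspond to $a\ell T$ provided $\ell\not\equiv 0\pmod p$.

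To finish I would invoke the standard fact that two cosets of $T$ in $(\mathbb{Z}/p\mathbb{Z})^\times$ are either equal or disjoint. Since $a$ is invertible, $akT=a\ell T$ iff $kT=\ell T$, which by the superclass description is equivalent to $\cpgen^\ell\in\superclass{\cpgen^k}$. The hypothesis therefore forces disjointness. The boundary case $k\equiv 0\pmod p$ (so $\cpgen^k=e$) is immediate: $\theta(e)=|T|$ is a sum of copies of $1$, whereas $\theta(\cpgen^\ell)$ for $\ell\not\equiv 0$ is a sum of nontrivial $p$th roots of unity. The only mild obstacle is unwinding Lemma \ref{fromauto} to pin down $X$ as a single $T$-orbit of characters indexed multiplicatively by $T$; once that is in place the conclusion is a one-line coset computation.
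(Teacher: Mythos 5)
Your proof is correct, but it takes a genuinely different route from the paper's. The paper argues elementarily: it assumes a root of unity $\rootofone^i$ is a common summand of $\theta(\cpgen^\ell)$ and $\theta(\cpgen^k)$, produces distinct summands $\chi_1,\chi_2$ of $\theta$ with $\chi_1(\cpgen^\ell)=\chi_2(\cpgen^k)=\rootofone^i$, and then applies the immediately preceding (unnumbered) lemma --- itself a consequence of Lemma \ref{samesize} on equal superclass sizes --- to find $\cpgen^{\ell'}\in\superclass{\cpgen^k}$ with $\chi_1(\cpgen^{\ell'})=\rootofone^i$; injectivity of a nontrivial character of $C_p$ then forces $\cpgen^\ell=\cpgen^{\ell'}\in\superclass{\cpgen^k}$, a contradiction. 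You instead invoke Lemma \ref{fromauto} to identify the superclasses with cosets of a subgroup $T\leq(\mathbb{Z}/p\mathbb{Z})^\times$ and the block of $\theta$ with an orbit $\{\chi_{as}\}_{s\in T}$, reducing the statement to the fact that the cosets $akT$ and $a\ell T$ are equal or disjoint. Your version is shorter and more transparent, and all the steps check out (including the identification of the dual orbit and the boundary case $k\equiv 0$), but it leans on the cited-but-not-proved Lemma \ref{fromauto}; the paper's version is self-contained given Lemma \ref{samesize}, which the paper proves directly, in keeping with its stated preference for elementary arguments that might generalize beyond the cyclic/automorphism setting. Both are valid proofs.
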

\begin{proof}
If either $\cpgen^\ell=e$ or $\cpgen^k=e$, then the result clearly holds. Suppose that $\cpgen^\ell \not= e$ and $\cpgen^k \not= e$. Clearly the result also holds if $\theta$ is a single irreducible character, so suppose it is not. Suppose that $\rootofone^i$ is a summand of both $\theta(\cpgen^\ell)$ and $\theta(\cpgen^k)$. Then there exist $\chi_1$, $\chi_2 \in \irr(C_p)$, $\chi_1\not=\chi_2$ such that $\chi_1,\chi_2$ are summands of $\theta$ and $\chi_1(\cpgen^\ell)=\chi_2(\cpgen^k)=\rootofone^i$. However, by the previous lemma there exists a $\cpgen^{\ell'}\in \superclass{\cpgen^k}$ such that $\chi_1(\cpgen^{\ell'})=\rootofone^i$, which is a contradiction.
\end{proof}

Suppose that $A$ is a supercharacter theory such that $\widehat{H_1},\widehat{H_2}\in A$ for a complementary pair $H_1,H_2$ of $C_p\times C_2\times C_2$. This may occur in two ways: $H_1\cong C_p \times C_2$ and $H_2\cong C_2$ or $H_1 \cong C_p$ and $H_2 \cong C_2 \times C_2$. We leave the proof of the following to the reader:
\begin{lemma}\label{refine}
 If $H_1$ and $H_2$ are a complementary pair and $\widehat{H_1},\widehat{H_2}\in A$, then $A$ is a refinement of the direct product theory of $A|_{H_1}$ and $A|_{H_2}$. 
\end{lemma}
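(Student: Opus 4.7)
The plan is to show directly that every superclass of the direct product theory is a union of superclasses of $A$, which is the definition of $A$ being a refinement of it. The entire argument rests on the fact that $A$ is closed under the regular product $\regprod$ and on the structural remark following Proposition \ref{algebraDef} that, for a subset $S\subseteq G$, membership $\widehat{S}\in A$ is equivalent to $S$ being a union of $A$-superclasses.

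First, I would recall the description of $A|_{H_i}$ given in the preliminaries: because $\widehat{H_i}\in A$, the subgroup $H_i$ is a union of $A$-superclasses, and the superclasses of $A|_{H_i}$ are exactly those $A$-superclasses contained in $H_i$. In particular, for each superclass $K_i\in \mathcal{K}_{A|_{H_i}}$, we have $\widehat{K_i}\in A$.

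Next, given $K_1\in \mathcal{K}_{A|_{H_1}}$ and $K_2\in \mathcal{K}_{A|_{H_2}}$, I would compute the regular product $\widehat{K_1}\regprod \widehat{K_2}$ in $\mathbb{C}G$. Because $H_1,H_2$ is a complementary pair, the map $(h_1,h_2)\mapsto h_1h_2$ is a bijection $H_1\times H_2\to G$, so the products $h_1h_2$ with $h_1\in K_1$, $h_2\in K_2$ are pairwise distinct. Hence
\begin{equation}
\widehat{K_1}\regprod \widehat{K_2}=\sum_{h_1\in K_1}\sum_{h_2\in K_2}h_1h_2=\widehat{K_1\times K_2},
\end{equation}
where $K_1\times K_2$ denotes $\{h_1h_2:h_1\in K_1,h_2\in K_2\}\subseteq G$, the corresponding superclass of the direct product theory.

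Finally, since $A$ is closed under $\regprod$ and contains $\widehat{K_1}$ and $\widehat{K_2}$, we conclude $\widehat{K_1\times K_2}\in A$, so $K_1\times K_2$ is a union of $A$-superclasses. As $K_1,K_2$ were arbitrary, every superclass of the direct product theory of $A|_{H_1}$ and $A|_{H_2}$ is a union of superclasses of $A$, which is exactly the statement that $A$ refines that theory. There is no real obstacle here: the argument is a one-line application of multiplicative closure together with the standard identification of superclasses of a restriction.
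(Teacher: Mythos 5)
Your proof is correct; the paper explicitly leaves this lemma to the reader, and your argument — that $\widehat{K_1}\regprod\widehat{K_2}=\widehat{K_1K_2}$ for superclasses $K_i$ of $A|_{H_i}$ (the products being distinct since $H_1,H_2$ are complementary), so closure of $A$ under $\regprod$ forces each direct-product superclass to be a union of $A$-superclasses — is precisely the intended one-line application of multiplicative closure.
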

If $H_1\cong C_p \times C_2$ and $H_2\cong C_2$, then we see that $A$ must be the direct product supercharacter theory as there is no supercharacter theory $A'$ which is a refinement of $A$ such that $A'|_{H_2}=A|_{H_2}$. Since $C_p \times C_2$ is cyclic, all its supercharacter theories have already been classified, see \cite{cyclicSchur} and \cite{cyclicSchurII}, and Lemma \ref{CpC2}. Hence the case we must consider is $H_1 \cong C_p$ and $H_2 \cong C_2 \times C_2$. Similarly if $A|_{H_1}$ or $A|_{H_2}$ is the minimal supercharacter theory, then $A$ must be the direct product supercharacter theory. We shall divide the remaining possibilities into two cases by considering the dimension of $A|_{\langle \kleingenone,\kleingentwo \rangle}$. By definition the dimension can't be one, and if the dimension is four $A|_{\langle \kleingenone,\kleingentwo \rangle}$ is the minimal supercharacter theory. Case 3 will be when the dimension is equal to three, and Case 4 will be when the dimension is equal to two.

\subsection{Case 3}

Suppose that $H_1=\langle \cpgen \rangle$, $H_2 =\{e,\kleingenone,\kleingentwo,\kleingenthree\}$ and $\widehat{H_1},\widehat{H_2}\in A$. By Lemma \ref{dual}, this implies that $\kleingenonechar+\kleingentwochar+\kleingenthreechar$ and $\widehat{\langle \cpgenchar \rangle}$ are sums of supercharacters. We consider the case where $A|_{\langle \cpgen \rangle}$ is not the minimal supercharacter theory for $\langle \cpgen \rangle$, and the dimension of $A|_{\langle \kleingenone,\kleingentwo\rangle}$ is equal to three. Without loss of generality we let $\{\kleingenone\}$ and $\{\kleingentwo,\kleingenthree\}$ be superclasses, which implies by Lemma \ref{dual} that $\kleingenonechar$ and $\kleingentwochar+\kleingenthreechar$ are supercharacters. Note that this implies $\widehat{\langle \cpgenchar,\kleingenonechar \rangle}$ is a sum of supercharacters. Let $T_{\mathcal{K}}$ be the set of superclasses which are disjoint from $\langle \cpgen,\kleingenone \rangle$ and $\langle \kleingenone,\kleingentwo \rangle$ and let $K\in T_{\mathcal{K}}$. Similarly let $T_{\mathcal{X}}$ be the set of supercharacters which are not a summand of $\widehat{\irr(\langle \cpgenchar,\kleingenonechar \rangle)}$ or $\widehat{\irr(\langle \kleingenonechar, \kleingentwochar \rangle)}$, and let $\theta \in T_{\mathcal{X}}$. We note that $|T_{\mathcal{K}}|=|T_{\mathcal{X}}|$. $\{\Mset{\theta}{\kleingentwochar}\}_{\theta \in T_{\mathcal{X}}}$ is a partition of $\irr(\langle \cpgen \rangle)\setminus \{ \operatorname{triv}\}$. Since $\Mset{\theta}{\kleingentwochar}=\Mset{\kleingenonechar\theta}{\kleingenthreechar}$ and $\theta \in T_{\mathcal{X}}$ iff $\kleingenonechar\theta \in T_{\mathcal{X}}$ we see that $\{\Mset{\theta}{\kleingenthreechar}\}_{\theta \in T_{\mathcal{X}}}$ is the same partition. Similarly $\{ \Mset{K}{\kleingentwo}\}_{K \in T_{\mathcal{K}}} = \{ \Mset{K}{\kleingenthree}\}_{K \in T_{\mathcal{K}}}$ are partitions of $\langle \cpgen \rangle\setminus \{e\}$. We want to show that these partitions define a supercharacter theory of $C_p$ which is a refinement of $A|_{\langle \cpgen \rangle}$. 

Since $\theta(\kleingentwo)=\theta(\kleingenthree)$ we have
\begin{equation}
|\Mset{\theta}{\kleingentwochar}|=|\Mset{\theta}{\kleingenthreechar}|.
\end{equation}
If $\Mset{\theta}{\kleingentwochar}\cap \Mset{\theta}{\kleingenthreechar}\not= \emptyset$, then $\theta=\kleingenonechar\theta$ so $\Mset{\theta}{\kleingentwochar}=\Mset{\theta}{\kleingenthreechar}$. Hence $\Mset{\theta}{\kleingentwochar}$ and $\Mset{\theta}{\kleingenthreechar}$ are either disjoint or equal.

Suppose they are disjoint. By Lemma \ref{refine}, $\theta$ is a summand of $\beta (\kleingentwochar+\kleingenthreechar)$ for some supercharacter $\beta$ a summand of $\widehat{\irr(\langle\cpgen\rangle)}$.
\begin{equation}
\theta(\kleingentwochar+\kleingenthreechar)=(\Msum{\theta}{\kleingentwochar}+\Msum{\theta}{\kleingenthreechar})+(\Msum{\theta}{\kleingentwochar}+\Msum{\theta}{\kleingenthreechar})\kleingenonechar.
\end{equation}
$\Msum{\theta}{\kleingentwochar}+ \Msum{\theta}{\kleingenthreechar}$ is a summand of $\beta$, so $\Msum{\theta}{\kleingentwochar}+ \Msum{\theta}{\kleingenthreechar}=\beta$.
If $\cpgen^{\ell}\kleingentwo \in \superclass{\cpgen^k \kleingentwo}$ for $\ell\not\equiv k \pmod p$ then
\begin{equation}
\theta(\cpgen^\ell \kleingentwo)=\theta(\cpgen^k \kleingentwo)
\end{equation}
\begin{equation}
\Msum{\theta}{\kleingentwochar}(\cpgen^{\ell})-\Msum{\theta}{\kleingenthreechar}(\cpgen^{\ell})=\Msum{\theta}{\kleingentwochar}(\cpgen^k)-\Msum{\theta}{\kleingenthreechar}(\cpgen^k).
\end{equation}
Since $\Mset{\theta}{\kleingentwochar}\cap \Mset{\theta}{\kleingenthreechar}=\emptyset$ this implies
\begin{equation}
\Msum{\theta}{\kleingentwochar}(\cpgen^{\ell})=\Msum{\theta}{\kleingentwochar}(\cpgen^k),
\end{equation}
\begin{equation}
\Msum{\theta}{\kleingenthreechar}(\cpgen^{\ell})=\Msum{\theta}{\kleingenthreechar}(\cpgen^k).
\end{equation}
Similarly if $\cpgen^{\ell}\kleingenthree \in \superclass{\cpgen^k \kleingenthree}$ for $\ell\not\equiv k \pmod p$ the above equations hold.

Since $|T_{\mathcal{X}}|=|T_{\mathcal{K}}|$, we see that 
\begin{equation}
(\{\Mset{\theta}{\kleingenthreechar}\}_{\theta \in T_{\mathcal{X}}},\{ \Mset{K}{\kleingenthree}\}_{K \in T_{\mathcal{K}}})=(\{\Mset{\theta}{\kleingentwochar}\}_{\theta \in T_{\mathcal{X}}},\{ \Mset{K}{\kleingentwo}\}_{K \in T_{\mathcal{K}}})
\end{equation}
is a supercharacter theory of $C_p$. It is clearly a refinement of $A|_{\langle \cpgen \rangle}$ since $A$ is a refinement of a direct product theory by Lemma \ref{refine}. We further observe that for $\cpgen^\ell\kleingentwo \in \superclass{\cpgen^k\kleingenthree}$
\begin{equation}
\theta(\cpgen^\ell\kleingentwo)=\theta(\cpgen^k\kleingenthree)
\end{equation}
\begin{equation}
\Msum{\theta}{\kleingentwochar}(\cpgen^\ell)-\Msum{\theta}{\kleingenthreechar}(\cpgen^\ell)=-\Msum{\theta}{\kleingentwochar}(\cpgen^k)+\Msum{\theta}{\kleingenthreechar}(\cpgen^k)
\end{equation}
\begin{equation}
\Msum{\theta}{\kleingentwochar}(\cpgen^\ell)=\Msum{\theta}{\kleingenthreechar}(\cpgen^k).
\end{equation}

Similarly, for $K\in T_{\mathcal{K}}$ we have that $\Mset{K}{\kleingentwo}$ and $\Mset{K}{\kleingenthree}$ are either equal or disjoint. Suppose that there exists $K$ such that $\Mset{K}{\kleingentwo}= \Mset{K}{\kleingenthree}$. Let $\cpgen^k \kleingentwo$, $\cpgen^k \kleingenthree \in K$ and $\theta \in T_{\mathcal{X}}$. Then 
\begin{equation}
\theta(\cpgen^k \kleingentwo)=\theta(\cpgen^k \kleingenthree)
\end{equation}
\begin{equation}
\theta(\cpgen^k \kleingentwo)=\Msum{\theta}{\kleingentwochar}(\cpgen^k)-\Msum{\theta}{\kleingenthreechar}(\cpgen^k)=-\theta(\cpgen^k \kleingenthree)
\end{equation}
\begin{equation}
\theta(\cpgen^k \kleingentwo)=\theta(\cpgen^k \kleingenthree)=0.
\end{equation}
So $\Msum{\theta}{\kleingentwochar}$ and $\Msum{\theta}{\kleingenthreechar}$ have to take the same value on $\Mset{K}{\kleingentwo}=\Mset{K}{\kleingenthree}$. For a supercharacter theory of $C_p$ where $p$ is prime, it is impossible for distinct supercharacters to take the same value on a superclass. Therefore either $\Mset{\theta}{\kleingentwochar}=\Mset{\theta}{\kleingenthreechar}$ and hence $A$ is a direct product theory, or $\Mset{K}{\kleingentwo}\not= \Mset{K}{\kleingenthree}$ for every $K\in T_{\mathcal{K}}$. Since $|T_{\mathcal{K}}|=|T_{\mathcal{X}}|$, the latter case implies $\Mset{\theta}{\kleingentwochar}\not= \Mset{\theta}{\kleingenthreechar}$ for every supercharacter $\theta \in T_{\mathcal{X}}$.

This concludes our consideration of the conditions that every supercharacter theory of the form of Case 3 must satisfy. We now consider the sufficient direction and show that any partition that satisfies these conditions is indeed a supercharacter theory. Given a set $X$ we will denote the set of all subsets of $X$ by $\mathcal{P}(X)$. We will consider the potential supercharacter theory $(\mathcal{X},\mathcal{K})$ of $G=\langle \cpgen,\kleingenone,\kleingentwo\rangle$ where $\mathcal{X}\subset \mathcal{P}(\irr(G))$, $\mathcal{K} \subset \mathcal{P}(G)$, $|\mathcal{X}|=|\mathcal{K}|$ and further the following are satisfied:
\begin{itemize}
\item
$(\mathcal{X}\cap \mathcal{P}(\irr(\langle \cpgen \rangle)),\mathcal{K}\cap\mathcal{P}(\langle \cpgen \rangle))$ is a non-minimal supercharacter theory for $C_p$ and we will denote the corresponding algebra by $B$.
\item
$\{\kleingenone\}, \{\kleingentwo,\kleingenthree\}\in \mathcal{K}$ and $\{\kleingenonechar\}, \{\kleingentwochar,\kleingenthreechar\}\in \mathcal{X}$.
\item
$(\mathcal{X}\cap \mathcal{P}(\irr(\langle \cpgen, \kleingenone \rangle)),\mathcal{K}\cap\mathcal{P}(\langle \cpgen, \kleingenone \rangle))$ is the direct product supercharacter theory of $B$ and the unique supercharacter theory of $\langle \kleingenone \rangle$.
\end{itemize}
If $(\mathcal{X},\mathcal{K})$ is a direct product supercharacter theory we are done, so assume that it is not. Suppose that 
\begin{equation}
\{\Mset{K}{\kleingentwo}|K\in \mathcal{K},K\subset \langle\cpgen\rangle \times \{\kleingentwo,\kleingenthree\} \}=\{\Mset{K}{\kleingenthree}|K\in \mathcal{K},K\subset \langle\cpgen\rangle \times \{\kleingentwo,\kleingenthree\}\}
\end{equation}
 is a supercharacter theory $B'$ of $C_p$ which is a refinement of $B$ satisfying the following condition: for every superclass $U\not= \{e\}$ of $B'$ there is a superclass $V$ of $B'$ satisfying 
\begin{enumerate}
\item[(A1)]
$|U|=|V|$ 
\item[(A2)]
$U\cap V=\emptyset$
\item[(A3)]
$U\cup V$ is a superclass of $B$
\item[(A4)]
$(U\times \{\kleingentwo\})\cup (V\times \{\kleingenthree\}) \in \mathcal{K}$
\item[(A5)]
$(U\times \{\kleingenthree\})\cup (V\times \{\kleingentwo\}) \in \mathcal{K}$.
\end{enumerate}
Also, the analogous conditions hold for the supercharacters as well, with $\sigma, \sigma'$ in the roles of $U,V$. We also assume that for every such supercharacter of $B'$ $\sigma\not= \operatorname{triv}$ we have for all $x\in U$, $y \in V$
\begin{equation}\label{charsplit}
\sigma(x)=\sigma'(y).
\end{equation}
This is well defined since $\sigma+\sigma'$ is a supercharacter for $B$, and hence is constant on $U\cup V$. Therefore Equation (\ref{charsplit}) is equivalent to $\sigma(y)=\sigma'(x)$. We can view these assumptions as stating that every superclass of $B'$ is a superclass of $B$ split into two halves which satisfy Equation (\ref{charsplit}). Note that $U$ and $V$ correspond to $\Mset{K}{\kleingentwo}$ and $\Mset{K}{\kleingenthree}$ and similarly $\sigma$ and $\sigma'$ correspond to $\Msum{\theta}{\kleingentwochar}$ and $\Msum{\theta}{\kleingenthreechar}$. This concludes our listing of assumptions, we shall now show that $(\mathcal{X},\mathcal{K})$ is a supercharacter theory for $G$. 

We observe that $|\mathcal{X}|=|\mathcal{K}|$, hence we only need to show that all potential supercharacters are constant on all potential superclasses.
We begin with $K\in T_{\mathcal{K}}$ and $\theta \in T_{\mathcal{X}}$. $\Msum{\theta}{\kleingentwochar}$ and $\Msum{\theta}{\kleingenthreechar}$ are constant on $\Mset{K}{\kleingentwo}$ and $\Mset{K}{\kleingenthree}$ because they are supercharacters and superclasses for $B'$. By Equation (\ref{charsplit}) we have that $\theta$ is constant on all of $K$. Since a supercharacter of $B$ is constant on $\Mset{K}{\kleingentwo}\cup \Mset{K}{\kleingenthree}$ by assumption (A3), it is constant on $\Mset{K}{\kleingentwo}$ and $\Mset{K}{\kleingenthree}$, so it is constant on $K$. Similarly for the supercharacters of $\irr(\langle \cpgen \rangle )\times\{ \kleingenonechar\}$. Clearly $\kleingenonechar$ and $\kleingentwochar+\kleingenthreechar$ are constant on $K$. Hence every potential supercharacter from $\mathcal{X}$ is constant on $K\in T_{\mathcal{K}}$.

Now let $K\in \mathcal{K}$ be a superclass which is a subset of $\langle \cpgen \rangle$. If $\cpgen^\ell \in K$, then for $\theta \in T_{\mathcal{X}}$ we have $\theta(\cpgen^\ell)=(\Msum{\theta}{\kleingentwochar}+\Msum{\theta}{\kleingenthreechar})(\cpgen^\ell)$. Since $\Msum{\theta}{\kleingentwochar}+\Msum{\theta}{\kleingenthreechar}$ is a supercharacter for $B$, we see that $\theta$ is constant on $K$. The supercharacters of $\irr(\langle \cpgen \rangle)$ and $\irr(\langle \cpgen \rangle)\times \{\kleingenonechar\}$ are clearly constant on $K$. $\kleingentwochar+\kleingenthreechar$ takes the constant value 2 and $\kleingenonechar$ takes the constant value $1$ on $K$. In a similar fashion, we see that all potential supercharacters in $\mathcal{X}$ are constant on the superclasses which are contained in $\langle \cpgen \rangle \times \{\kleingenone\}$.

Finally we consider the superclass $\{\kleingentwo,\kleingenthree\}$. For $\theta \in T_{\mathcal{X}}$ we have 
\begin{align}
&\theta(\kleingentwo)=|\Mset{\theta}{\kleingentwochar}|-|\Mset{\theta}{\kleingenthreechar}|=0,
\\
&\theta(\kleingenthree)=-|\Mset{\theta}{\kleingentwochar}|+|\Mset{\theta}{\kleingenthreechar}|=0.
\end{align}
Clearly any supercharacter of $\irr(\langle \cpgen \rangle)$ or $\irr(\langle \cpgen \rangle)\times \{\kleingenonechar\}$ is constant on $\{\kleingentwo,\kleingenthree\}$. $\kleingenonechar$ takes the value $-1$, and $\kleingentwochar+\kleingenthreechar$ takes the value 0 on $\{\kleingentwo,\kleingenthree\}$.
Therefore every potential supercharacter is constant on every potential superclass. We conclude that $(\mathcal{X},\mathcal{K})$ defines a supercharacter theory of $G$.

The above argument describes the supercharacter theories of Case 3. We will now present an alternate proof of the sufficient direction above, as it is useful to see that every supercharacter theory of Case 3 which is not a direct product can be generated by a group of automorphisms.  Let $A$ be such a supercharacter theory, and let $B$ be the supercharacter theory $A|_{\langle \cpgen \rangle}$. Let $B'$ be a refinement of $B$ satisfying $(A1)$ through $(A5)$. By Lemma \ref{fromauto} every supercharacter theory of $C_p$ can be generated by a subgroup of $\aut(C_p)\cong C_{p-1}$. Since every subgroup of a cyclic group is cyclic, there exists $\phi_0 \in \aut(\langle \cpgen \rangle)$ such that$\langle \phi_0 \rangle$ generates $B$. Similarly, there exists $\phi'$ where $\langle \phi' \rangle$ generates the supercharacter theory $B'$. By Lemma \ref{samesize} we know that every superclass other than $\{e\}$ in $B'$ is the same size, and by (A3) the size of every superclass other then $\{e\}$ in $B$ is double that size. Further, it is clear that this size equals $|\langle \phi' \rangle |$. Since a cyclic group has at most one subgroup of a given order we have $\langle \phi' \rangle=\langle \phi_0^2 \rangle$. We now let $\phi \in \aut(G)$ be defined by $\phi(\cpgen)=\phi_0(\cpgen)$, $\phi(\kleingenone)=\kleingenone$, $\phi(\kleingentwo)=\kleingentwo$, and $\phi(\kleingenthree)=\kleingenthree$.

Let $\psi \in \aut(G)$ be such that $\psi( \cpgen)=\cpgen$, $\psi(\kleingenone)=\kleingenone$, $\psi(\kleingentwo)=\kleingenthree$, and $\psi(\kleingenthree)=\kleingentwo$. Let $A'$ be the supercharacter theory generated by the group of automorphisms $\langle \phi \circ \psi \rangle$. We will show that $A=A'$. We see that $\{\kleingenone\}$ and $\{\kleingentwo, \kleingenthree \}$ are superclasses for $A'$, and $A|_{\langle \cpgen \rangle}=A'|_{\langle \cpgen \rangle}$. Let $\ell\not \equiv 0 \pmod p$ and let $K_0=\superclass{\cpgen^\ell \kleingentwo} \in \mathcal{K}(A')$. Then $\Mset{K_0}{\kleingentwo}=\{\cpgen^\ell,\phi^2(\cpgen^\ell),\phi^4(\cpgen^\ell),\ldots\}$ and $\Mset{K_0}{\kleingenthree}=\{\phi(\cpgen^\ell),\phi^3(\cpgen^\ell),\phi^5(\cpgen^\ell),\ldots\}$. Hence we see that $\{\Mset{K}{\kleingentwo}\}_{K\in \mathcal{K}(A')}=\{\Mset{K}{\kleingentwo}\}_{K\in \mathcal{K}(A)}$, $\{\Mset{K}{\kleingenthree}\}_{K\in \mathcal{K}(A')}=\{\Mset{K}{\kleingenthree}\}_{K\in \mathcal{K}(A)}$, and $\Mset{K_0}{b}\cup \Mset{K_0}{c}\in \mathcal{K}(A|_{\langle \cpgen \rangle})$. Hence $A$ and $A'$ have the same superclasses, so we conclude that $A=A'$.

For a supercharacter theory generated by automorphisms as above, see Example \ref{exwithM}.

\subsection{Case 4}

We now consider the final case for $p$ odd. Again suppose that $H_1=\langle \cpgen \rangle$ and $H_2 =\{e,\kleingenone,\kleingentwo,\kleingenthree\}$, and $\widehat{H_1},\widehat{H_2}\in A$. Suppose that $A|_{\langle \cpgen \rangle}$ is not the minimal supercharacter theory, and that the dimension of $A|_{\langle \kleingenone,\kleingentwo \rangle}$ is equal to 2 which is equivalent to $\{\kleingenone,\kleingentwo,\kleingenthree\}$ is a superclass. Let $T_{\mathcal{K}}$ be the set of superclasses of $A$ which are disjoint from $H_1$ and $H_2$, and let $T_{\mathcal{X}}$ be the set of supercharacters which are not a summand of $\widehat{\irr(H_1)}$ or $\widehat{\irr(H_2)}$. Again $|T_{\mathcal{K}}|=|T_{\mathcal{X}}|$. Let $K \in T_{\mathcal{K}}$ and $\theta \in T_{\mathcal{X}}$. Since $\{\kleingenone,\kleingentwo,\kleingenthree\}$ is a superclass, there is a constant $r$ such that
\begin{equation}
((\cpgen+\cpgen^2+\ldots+\cpgen^{p-1})\widehat{K})\hadaprod (\kleingenone+\kleingentwo+\kleingenthree)=r(\kleingenone+\kleingentwo+\kleingenthree) \in A.
\end{equation}
Hence $|\Mset{K}{\kleingenone}|=|\Mset{K}{\kleingentwo}|=|M(K,\kleingenthree)|=r$. Similarly $|\Mset{\theta}{\kleingenonechar}|=|\Mset{\theta}{\kleingentwochar}|=|\Mset{\theta}{\kleingenthreechar}|$. Then $(\kleingenone+\kleingentwo+\kleingenthree)\widehat{K}$ equals:
\begin{align}\notag
&(\Msum{K}{\kleingenone}+\Msum{K}{\kleingentwo}+\Msum{K}{\kleingenthree})+(\Msum{K}{\kleingenthree}\kleingenone+\Msum{K}{\kleingenone}\kleingentwo+\Msum{K}{\kleingentwo}\kleingenthree)+
\\ 
&(\Msum{K}{\kleingentwo}\kleingenone+\Msum{K}{\kleingenthree}\kleingentwo + \Msum{K}{\kleingenone}\kleingenthree).
\end{align}
By Lemma \ref{refine} $K$ is a subset of $\alpha \times \{\kleingenone,\kleingentwo,\kleingenthree\}$ for some superclass $\alpha$ of $A|_{\langle \cpgen \rangle}$, so we have for some constant $t$, $\Msum{K}{\kleingenone}+ \Msum{K}{\kleingentwo} + \Msum{K}{\kleingenthree}=t\widehat{\alpha}$. Then the remaining terms
\begin{equation}
W=(\Msum{K}{\kleingenthree}+\Msum{K}{\kleingentwo})\kleingenone+(\Msum{K}{\kleingenone}+\Msum{K}{\kleingenthree})\kleingentwo+(\Msum{K}{\kleingentwo}+\Msum{K}{\kleingenone})\kleingenthree \in A.
\end{equation}
If $\widehat{K}$ is not a summand of $W$ then $\Mset{K}{\kleingenone}$, $\Mset{K}{\kleingentwo}$, and $\Mset{K}{\kleingenthree}$ are pairwise disjoint. Suppose $\widehat{K}$ is a summand. Without loss of generality, suppose $\cpgen^{\ell} \in \Mset{K}{\kleingenone}\cap \Mset{K}{\kleingentwo}$ with $\ell\not\equiv 0 \pmod p$. Then
\begin{equation}
\theta(\cpgen^{\ell}\kleingenone)=\theta(\cpgen^{\ell}\kleingentwo)
\end{equation}
and further by evaluating we see that
\begin{align}\notag
&\Msum{\theta}{\kleingenonechar}(\cpgen^{\ell})-\Msum{\theta}{\kleingentwochar}(\cpgen^{\ell})-\Msum{\theta}{\kleingenthreechar}(\cpgen^{\ell})=
\\
&-\Msum{\theta}{\kleingenonechar}(\cpgen^{\ell})+\Msum{\theta}{\kleingentwochar}(\cpgen^{\ell})-\Msum{\theta}{\kleingenthreechar}(\cpgen^{\ell})
\end{align}
which becomes
\begin{equation}
\Msum{\theta}{\kleingenonechar}(\cpgen^{\ell})-\Msum{\theta}{\kleingentwochar}(\cpgen^{\ell})=-\Msum{\theta}{\kleingenonechar}(\cpgen^{\ell})+\Msum{\theta}{\kleingentwochar}(\cpgen^{\ell})
\end{equation}
\begin{equation}
\Msum{\theta}{\kleingenonechar}(\cpgen^{\ell})=\Msum{\theta}{\kleingentwochar}(\cpgen^{\ell}).
\end{equation}
That implies $\Mset{\theta}{\kleingenonechar}=\Mset{\theta}{\kleingentwochar}$. Since $\Msum{\theta}{\kleingenonechar}+\Msum{\theta}{\kleingentwochar}+\Msum{\theta}{\kleingenthreechar}$ is a multiple of a $C_p$ supercharacter, we have $\Mset{\theta}{\kleingenthreechar}=\Mset{\theta}{\kleingentwochar}$ because $\Msum{\theta}{\kleingenonechar}+\Msum{\theta}{\kleingentwochar}+\Msum{\theta}{\kleingenthreechar}=2\Msum{\theta}{\kleingentwochar}+\Msum{\theta}{\kleingenthreechar}$ implies that some terms have multiplicity at least 2, hence all terms must have multiplicity 3. Therefore every supercharacter $\theta \in T_{\mathcal{X}}$ satisfies $\theta=\Msum{\theta}{\kleingenonechar}(\kleingenonechar+\kleingentwochar+\kleingenthreechar)$ hence $A$ is a direct product supercharacter theory. Therefore for all $K\in T_{\mathcal{K}}$ we have $\Mset{K}{\kleingenone}=\Mset{K}{\kleingentwo}=\Mset{K}{\kleingenthree}$.

Hence either $\Mset{K}{\kleingenone}$, $\Mset{K}{\kleingentwo}$, and $\Mset{K}{\kleingenthree}$ are all equal or all pairwise disjoint. Further, if there is one superclass $K\in T_{\mathcal{K}}$ with $\Mset{K}{\kleingenone}= \Mset{K}{\kleingentwo}=\Mset{K}{\kleingenthree}$, then it must be true for all superclasses in $T_{\mathcal{K}}$. Also if $\Mset{K}{\kleingenone}=\Mset{K}{\kleingentwo}=\Mset{K}{\kleingenthree}$ then $W=2\widehat{K}$ since they both have $6r$ terms. It is clear that if $\Mset{K}{\kleingenone}$, $\Mset{K}{\kleingentwo}$, and $\Mset{K}{\kleingenthree}$ are pairwise disjoint for all $K\in T_{\mathcal{K}}$, then $\Mset{\theta}{\kleingenonechar}$, $\Mset{\theta}{\kleingentwochar}$, and $\Mset{\theta}{\kleingenthreechar}$ are pairwise disjoint for all $\theta\in T_{\mathcal{X}}$.

Suppose that $\Mset{K}{\kleingenone}$, $\Mset{K}{\kleingentwo}$, and $\Mset{K}{\kleingenthree}$ are pairwise disjoint for all $K\in T_{\mathcal{K}}$. Fix a $K\in T_{\mathcal{K}}$ and let $\cpgen^s, \cpgen^t \in \Mset{K}{\kleingenone}$. Then
\begin{equation}
\theta(\cpgen^s \kleingenone)=\theta(\cpgen^t \kleingenone).
\end{equation}
Evaluating gives
\begin{align}\notag
&\Msum{\theta}{\kleingenonechar}(\cpgen^s)-\Msum{\theta}{\kleingentwochar}(\cpgen^s)-\Msum{\theta}{\kleingenthreechar}(\cpgen^s)=
\\
&\Msum{\theta}{\kleingenonechar}(\cpgen^t)-\Msum{\theta}{\kleingentwochar}(\cpgen^t)-\Msum{\theta}{\kleingenthreechar}(\cpgen^t).
\end{align}
Since $\Mset{\theta}{\kleingenonechar}$, $\Mset{\theta}{\kleingentwochar}$, and $\Mset{\theta}{\kleingenthreechar}$ are pairwise disjoint we have
\begin{equation}
\Msum{\theta}{\kleingenonechar}(\cpgen^s)=\Msum{\theta}{\kleingenonechar}(\cpgen^t).
\end{equation}
Therefore $\Msum{\theta}{\kleingenonechar}$ is constant on $\Mset{K}{\kleingenone}$. Similarly $\Msum{\theta}{\kleingentwochar}$ is constant on $\Mset{K}{\kleingentwo}$ and $\Msum{\theta}{\kleingenthreechar}$ is constant on $\Mset{K}{\kleingenthree}$. We conclude that the three pairs
\begin{equation}
\left(\{\Mset{\theta}{\kleingenonechar}\}_{\theta \in T_{\mathcal{X}}},\{\Mset{K}{\kleingenone}\}_{K \in T_{\mathcal{K}}}\right),
\end{equation}
\begin{equation}
\left(\{\Mset{\theta}{\kleingentwochar}\}_{\theta \in T_{\mathcal{X}}},\{\Mset{K}{\kleingentwo}\}_{K \in T_{\mathcal{K}}}\right),
\end{equation}
\begin{equation}
\left(\{\Mset{\theta}{\kleingenthreechar}\}_{\theta \in T_{\mathcal{X}}},\{\Mset{K}{\kleingenthree}\}_{K \in T_{\mathcal{K}}}\right),
\end{equation}
are all supercharacter theories for $C_p$ after adjoining $\{\operatorname{triv}\}$ and $\{e\}$. Using Lemma \ref{fromauto} and recalling Remark \ref{autormrk}, we see that since they are all the same dimension and $p$ is prime, they are all the same supercharacter theory.

Suppose $\cpgen^{k}\in \Mset{K}{\kleingenone}$ and $\cpgen^{\ell} \in \Mset{K}{\kleingentwo}$, then
\begin{equation}
\theta(\cpgen^k\kleingenone)=\theta(\cpgen^{\ell} \kleingentwo).
\end{equation}
Evaluating gives
\begin{align}\notag
&\Msum{\theta}{\kleingenonechar}(\cpgen^k)-\Msum{\theta}{\kleingentwochar}(\cpgen^k)-\Msum{\theta}{\kleingenthreechar}(\cpgen^k)=
\\ \label{case4maineq}
&-\Msum{\theta}{\kleingenonechar}(\cpgen^{\ell})+\Msum{\theta}{\kleingentwochar}(\cpgen^{\ell})-\Msum{\theta}{\kleingenthreechar}(\cpgen^{\ell}).
\end{align}
For any $j$ and $\kleingenxchar$ and $\kleingenychar$ where $x,y\in \{a,b,c\}$ and $x\not=y$ we observe that the set of roots of unity which appear as summands of $\Msum{\theta}{\kleingenxchar}(\cpgen^j)$ and $\Msum{\theta}{\kleingenychar}(\cpgen^j)$ are disjoint. Therefore 
\begin{equation}
\Msum{\theta}{\kleingenonechar}(\cpgen^k)=\Msum{\theta}{\kleingentwochar}(\cpgen^{\ell}).
\end{equation}
Hence for a fixed superclass $K$, $\cpgen^k\in \Mset{K}{\kleingenone}$, $\cpgen^{\ell} \in \Mset{K}{\kleingentwo}$, and $\cpgen^r \in \Mset{K}{\kleingenthree}$, we have by symmetry
\begin{equation}
\Msum{\theta}{\kleingenonechar}(\cpgen^k)=\Msum{\theta}{\kleingentwochar}(\cpgen^{\ell})=\Msum{\theta}{\kleingenthreechar}(\cpgen^r).
\end{equation}
By Lemma \ref{disjoint}, we have for $\cpgen^k \in \Mset{K}{\kleingenone}$ and $\cpgen^{\ell} \in \Mset{K}{\kleingentwo}$ that the set of roots of unity which appear as summands of $\Msum{\theta}{\kleingenthreechar}(\cpgen^k)$ is disjoint from those in $\Msum{\theta}{\kleingenthreechar}(\cpgen^{\ell})$.
Returning to Equation (\ref{case4maineq}), this implies
\begin{equation}
\Msum{\theta}{\kleingentwochar}(\cpgen^k)=\Msum{\theta}{\kleingenthreechar}(\cpgen^{\ell}),
\end{equation}
\begin{equation}
\Msum{\theta}{\kleingenonechar}(\cpgen^{\ell})=\Msum{\theta}{\kleingenthreechar}(\cpgen^k).
\end{equation}
Then we proceed in a similar fashion to get

\begin{equation}
\Msum{\theta}{\kleingenonechar}(\cpgen^r)=\Msum{\theta}{\kleingentwochar}(\cpgen^k),
\end{equation}
\begin{equation}
\Msum{\theta}{\kleingenthreechar}(\cpgen^k)=\Msum{\theta}{\kleingentwochar}(\cpgen^r).
\end{equation}
Note that the final set of equations we get by symmetry is redundant, as we may already conclude
\begin{equation}
\Msum{\theta}{\kleingentwochar}(\cpgen^k)=\Msum{\theta}{\kleingenthreechar}(\cpgen^{\ell})=\Msum{\theta}{\kleingenonechar}(\cpgen^r),
\end{equation}
\begin{equation}
\Msum{\theta}{\kleingenthreechar}(\cpgen^k)=\Msum{\theta}{\kleingenonechar}(\cpgen^{\ell})=\Msum{\theta}{\kleingentwochar}(\cpgen^r).
\end{equation}

Suppose that the supercharacter theories $A|_{\langle \cpgen \rangle}$ and $\left( \{\Mset{K}{\kleingenone}\}_{K \in T_{\mathcal{K}}},\{\Mset{\theta}{\kleingenonechar}\}_{\theta \in T_{\mathcal{X}}}\right)$ are given. Then given a superclass $\alpha \in A|_{\langle \cpgen \rangle}$, we have $\alpha=\alpha_1\cup \alpha_2 \cup \alpha_3$ where $\alpha_1,\alpha_2$, and $\alpha_3$ are superclasses in the supercharacter theory $\left( \{\Mset{K}{\kleingenone}\}_{K\in T_{\mathcal{K}}},\{\Mset{\theta}{\kleingenonechar}\}_{\theta\in T_{\mathcal{X}}}\right)$. Then there are two possibilities: either we have for $A$ the three superclasses
\begin{equation}\label{Achoice1}
\{\alpha_1\kleingenone \cup \alpha_2\kleingentwo \cup \alpha_3\kleingenthree\}, \{\alpha_2\kleingenone \cup \alpha_3\kleingentwo \cup \alpha_1\kleingenthree\}, \{\alpha_3\kleingenone \cup \alpha_1\kleingentwo \cup \alpha_2\kleingenthree\}
\end{equation} 
or
\begin{equation}\label{Achoice2}
\{\alpha_1\kleingenone \cup \alpha_3\kleingentwo \cup \alpha_2\kleingenthree\}, \{\alpha_2\kleingenone \cup \alpha_1\kleingentwo \cup \alpha_3\kleingenthree\}, \{\alpha_3\kleingenone \cup \alpha_2\kleingentwo \cup \alpha_1\kleingenthree\}.
\end{equation} 
The situation is similar for $\theta \in T_{\mathcal{X}}$. However, we do not have independent possibilities for each $K$. The above equations tell us that given a single fixed superclass $K \in T_{\mathcal{K}}$, for any $\theta \in T_{\mathcal{X}}$ the value of $\Msum{\theta}{\kleingenonechar}$ on $\Mset{K}{\kleingenone}$, $\Mset{K}{\kleingentwo}$, and $\Mset{K}{\kleingenthree}$ determines the values of $\Msum{\theta}{\kleingentwochar}$ and $\Msum{\theta}{\kleingenthreechar}$. Hence the value of a single $K$ as either Equation (\ref{Achoice1}) or (\ref{Achoice2}) determines all supercharacters in $T_{\mathcal{X}}$ and hence the value as Equation (\ref{Achoice1}) or (\ref{Achoice2}) for every other $K\in T_{\mathcal{K}}$ is determined.

To understand the two possibilities, consider the algebra automorphism $\phi:\mathbb{C}G\rightarrow \mathbb{C}G$ defined by $\phi(\cpgen)=\cpgen$, $\phi(\kleingenone)=\kleingenone$, $\phi(\kleingentwo)=\kleingenthree$, and $\phi(\kleingenthree)=\kleingentwo$. We see that $\phi$ exchanges the superclasses in Equations (\ref{Achoice1}) or (\ref{Achoice2}), so both yield supercharacter theories for $G$.

As in Case 3, we are now ready to consider the sufficient direction. Let $B$ be a supercharacter theory for $\langle \cpgen \rangle$ and let $B'$ be a refinement of $B$ such that each superclass not equal to $\{e\}$ and each supercharacter not equal to $\operatorname{triv}$ of $B$ is partitioned into 3 equal sized superclasses and supercharacters respectively in $B'$. We partition $G$ and $\irr(G)$ so that the partition of $\langle \cpgen \rangle$ and $\irr(\langle \cpgen \rangle)$ matches that of $B$, $\{ \kleingenone,\kleingentwo,\kleingenthree \}$ is an element of the partition of $G$, and $\{ \kleingenonechar,\kleingentwochar,\kleingenthreechar \}$ is an element of the partition of $\irr(G)$. As before, let $T_{\mathcal{K}}$ be the subset of the partition of $G$ disjoint from $\langle \cpgen \rangle$ and $\langle \kleingenone,\kleingentwo \rangle$, and let $T_{\mathcal{X}}$ be the set of potential supercharacters which are not summands of $\widehat{\irr(\langle \cpgenchar \rangle)}$ or $\widehat{\irr(\langle \kleingenonechar,\kleingentwochar \rangle)}$. We assume that for all $K\in T_{\mathcal{K}}$, $\Mset{K}{\kleingenone}$, $\Mset{K}{\kleingentwo}$, and $\Mset{K}{\kleingenthree}$ are pairwise disjoint, and for all $\theta \in T_{\mathcal{X}}$, $\Mset{\theta}{\kleingenonechar}$, $\Mset{\theta}{\kleingentwochar}$, and $\Mset{\theta}{\kleingenthreechar}$ are pairwise disjoint. Further, we assume that each of $\left( \{ \Mset{\theta}{\kleingenonechar}\}_{\theta\in T_{\mathcal{X}}}, \{ \Mset{K}{\kleingenone}\}_{K \in T_{\mathcal{K}}}\right)$, $\left( \{ \Mset{\theta}{\kleingentwochar}\}_{\theta\in T_{\mathcal{X}}}, \{ \Mset{K}{\kleingentwo}\}_{K \in T_{\mathcal{K}}} \right)$, and $\left( \{ \Mset{\theta}{\kleingenthreechar}\}_{\theta\in T_{\mathcal{X}}}, \{ \Mset{K}{\kleingenthree}\}_{K \in T_{\mathcal{K}}} \right)$, after adjoining $\{\operatorname{triv}\}$ and $\{e\}$, match the supercharacter theory $B'$ . For all $K\in T_{\mathcal{K}}$ and $\theta \in T_{\mathcal{X}}$ we assume that $\Mset{K}{\kleingenone}\cup \Mset{K}{\kleingentwo} \cup \Mset{K}{\kleingenthree}$ is a superclass of $B$ and $\Mset{\theta}{\kleingenonechar}\cup \Mset{\theta}{\kleingentwochar} \cup \Mset{\theta}{\kleingenthreechar}\in \mathcal{X}_{B}$. Further for every $K\in T_{\mathcal{K}}$, $\theta\in T_{\mathcal{X}}$, we suppose that for $\cpgen^k \in \Mset{K}{\kleingenone}$, $\cpgen^{\ell}\in \Mset{K}{\kleingentwo}$, and $\cpgen^r \in \Mset{K}{\kleingenthree}$ the following holds:
\begin{equation}\label{sufeq1}
\Msum{\theta}{\kleingenonechar}(\cpgen^r)=\Msum{\theta}{\kleingentwochar}(\cpgen^k)=\Msum{\theta}{\kleingenthreechar}(\cpgen^{\ell}),
\end{equation}
\begin{equation}\label{sufeq2}
\Msum{\theta}{\kleingenonechar}(\cpgen^k)=\Msum{\theta}{\kleingentwochar}(\cpgen^{\ell})=\Msum{\theta}{\kleingenthreechar}(\cpgen^r),
\end{equation}
\begin{equation}\label{sufeq3}
\Msum{\theta}{\kleingenonechar}(\cpgen^{\ell})=\Msum{\theta}{\kleingentwochar}(\cpgen^r)=\Msum{\theta}{\kleingenthreechar}(\cpgen^k).
\end{equation}

We want to show that the partition $\mathcal{K}_{B}\cup T_{\mathcal{K}} \cup \{\{ \kleingenone,\kleingentwo,\kleingenthree\}\}$ is a set of superclasses for a supercharacter theory of $G$ and that the corresponding supercharacters are those from $B$, $T_{\mathcal{X}}$, and $\kleingenonechar+\kleingentwochar+\kleingenthreechar$. Clearly the partitions of $G$ and $\irr(G)$ are the same size. Hence we only need to show that every potential supercharacter is constant on every potential superclass. Let $\theta_1$ be a supercharacter for $B$, $\theta_1 \not= \operatorname{triv}$, and let $\theta_2\in T_{\mathcal{X}}$.

Let $K\in T_{\mathcal{K}}$. Then $\widehat{K}=\Msum{K}{\kleingenone}\kleingenone+\Msum{K}{\kleingentwo}\kleingentwo +\Msum{K}{\kleingenthree}\kleingenthree$. Clearly $\kleingenonechar+\kleingentwochar+\kleingenthreechar$ is a constant $-1$ on $K$. $\theta_1$ is a supercharacter for $B$, so it is constant on $\Mset{K}{\kleingenone}\cup \Mset{K}{\kleingentwo} \cup \Mset{K}{\kleingenthree}\in \mathcal{K}_{B}$, hence $\theta_1$ is constant on $K$. Let $\cpgen^k,\cpgen^{\ell},\cpgen^r$ be as above, so $\cpgen^k\kleingenone$, $\cpgen^{\ell}\kleingentwo$, and $\cpgen^r\kleingenthree$ are elements of $K$. Then using Equations (\ref{sufeq1}), (\ref{sufeq2}), and (\ref{sufeq3}) we have
\begin{align}\notag
&\theta_2(\cpgen^k\kleingenone)=\Msum{\theta_2}{\kleingenonechar}(\cpgen^k)-\Msum{\theta_2}{\kleingentwochar}(\cpgen^k)-\Msum{\theta_2}{\kleingenthreechar}(\cpgen^k)=
\\
\notag
&\Msum{\theta_2}{\kleingentwochar}(\cpgen^{\ell})-\Msum{\theta_2}{\kleingenthreechar}(\cpgen^{\ell})-\Msum{\theta_2}{\kleingenonechar}(\cpgen^{\ell})=
\\
\notag
&\theta_2(\cpgen^{\ell}\kleingentwo)=
\\
\notag
&\Msum{\theta_2}{\kleingenthreechar}(\cpgen^r)-\Msum{\theta_2}{\kleingenonechar}(\cpgen^r)-\Msum{\theta_2}{\kleingentwochar}(\cpgen^r)=
\\
&\theta_2(\cpgen^r\kleingenthree).
\end{align}
Now suppose that for $x\in \{\kleingenone,\kleingentwo,\kleingenthree\}$ we have $x\cpgen^{\ell},x\cpgen^{\ell'}\in K$. Since $\Mset{K}{x}$ is a superclass for $B'$ and $\Msum{\theta_2}{\kleingenonechar}$, $\Msum{\theta_2}{\kleingentwochar}$, and $\Msum{\theta_2}{\kleingenthreechar}$ are supercharacters for $B'$ we have $\theta_2(x\cpgen^{\ell})=\theta_2(x\cpgen^{\ell'})$. Hence $\theta_2$ is constant on $K$.

Now suppose $K\in \mathcal{K}_{B}$. $\kleingenonechar+\kleingentwochar+\kleingenthreechar$ takes the constant value 3 on $K$. $\theta_1$ is constant on $K$ because $B$ is a supercharacter theory for $C_p$. If $\cpgen^i \in K$, then $\theta_2(\cpgen^i)$ equals
\begin{equation}\notag
\Msum{\theta_2}{\kleingenonechar}(\cpgen^i)+\Msum{\theta_2}{\kleingentwochar}(\cpgen^i)+\Msum{\theta_2}{\kleingenthreechar}(\cpgen^i)=
\end{equation}
\begin{equation}
(\Msum{\theta_2}{\kleingenonechar}+\Msum{\theta_2}{\kleingentwochar}+\Msum{\theta_2}{\kleingenthreechar})(\cpgen^i).
\end{equation}
Since $\Msum{\theta_2}{\kleingenonechar}+\Msum{\theta_2}{\kleingentwochar}+\Msum{\theta_2}{\kleingenthreechar}$ is a supercharacter for $B$, we see that $\theta_2$ is constant on $K$.

Finally, let $K=\{\kleingenone,\kleingentwo,\kleingenthree\}$. $\kleingenonechar+\kleingentwochar+\kleingenthreechar$ takes the constant value of $-1$ on $K$. $\theta_1$ is constant on $K$, taking the value equal to the number of irreducible characters which are summands of $\theta_1$ which may be expressed as $|\superclass{\chi}|$ if $\chi$ is a summand of $\theta_1$. We have
\begin{align}
\theta_2(\kleingenone) &= |\Mset{\theta_2}{\kleingenonechar}|-|\Mset{\theta_2}{\kleingentwochar}|-|\Mset{\theta_2}{\kleingenthreechar}|,\\
\theta_2(\kleingentwo) &= -|\Mset{\theta_2}{\kleingenonechar}|+|\Mset{\theta_2}{\kleingentwochar}|-|\Mset{\theta_2}{\kleingenthreechar}|,\\
\theta_2(\kleingenthree) &= -|\Mset{\theta_2}{\kleingenonechar}|-|\Mset{\theta_2}{\kleingentwochar}|+|\Mset{\theta_2}{\kleingenthreechar}|.
\end{align}
Since $|\Mset{\theta_2}{\kleingenonechar}|=|\Mset{\theta_2}{\kleingentwochar}|=|\Mset{\theta_2}{\kleingenthreechar}|$, we see that $\theta_2$ is constant on $K$. Hence we conclude that all potential supercharacters are constant on all potential superclasses, so the partition $\mathcal{K}_{B}\cup T_{\mathcal{K}} \cup \{\{ \kleingenone,\kleingentwo,\kleingenthree\}\}$ is indeed a set of superclasses for a supercharacter theory of $C_p\times C_2\times C_2$.

As in Case 3 we have presented a direct proof for the sufficient direction. However, such a supercharacter theory can be generated by a group of automorphisms in a similar fashion to Case 3. Let $A$ be a supercharacter theory satisfying the conditions of the sufficient direction above, which is not a direct product supercharacter theory. There exists a $\phi_0 \in \aut(\langle \cpgen\rangle)$ such that $\langle \phi_0 \rangle$ generates the supercharacter theory $A|_{\langle \cpgen\rangle}$. Let $\phi \in \aut(G)$ be defined by $\phi(\cpgen)=\phi_0(\cpgen)$ and $\phi|_{\langle \kleingenone,\kleingentwo\rangle}=id$. Let $\psi \in \aut(G)$ be given by $\psi(\cpgen)=\cpgen$, $\psi(\kleingenone)=\kleingentwo$, $\psi(\kleingentwo)=\kleingenthree$, and $\psi(\kleingenthree)=\kleingenone$. Then by an analogous argument as in Case 3, we see that $\langle \phi \circ \psi \rangle$ generates $A$.

For supercharacter theories generated by automorphisms as above, see Examples \ref{case41} and \ref{case42}.

\section{Case $(C_2)^3$}\label{p2}

To complete our proof, we will now discuss the supercharacter theories of $C_2\times C_2 \times C_2=\langle x \rangle \times \langle y \rangle \times \langle z \rangle$. We first note that there are 5 supercharacter theories for $C_2\times C_2$: the minimal supercharacter theory, the maximal supercharacter theory, and 3 isomorphic to the supercharacter theory with superclasses
\begin{equation}
\{e\},\{x\},\{y,xy\}.
\end{equation}
Let $A$ be a nontrivial supercharacter theory for $(C_2)^3$. For this group we cannot use Theorem \ref{Wielandt} to show that there must exist a proper nontrivial subgroup $H$ such that $\widehat{H} \in A$. So we begin by supposing there is no such $H$. Because $A$ is nontrivial, no superclass has size 7. Since every element other than the identity has order 2, the only superclass which can have size 1 is $\{e\}$. By considering the complement, this implies that no superclass has size 6. We have for $\{u,v\}$ a superclass
\begin{equation}
(u+v)^2=2e +2uv \in A.
\end{equation}
This implies that $\{uv\}$ is a superclass, so we cannot have a superclass of size 2. Hence we cannot have a superclass of size 5, again by considering complements. This leaves only a superclass of size 3 and a superclass of size 4 as a valid option. If $\{u,v,w\}$ is the superclass of size 3, then by assumption $\{e,u,v,w\}$ is not a subgroup. Therefore $\{u,v,w\}$ is disjoint from $\{uv,uw,vw\}$. We have
\begin{equation}
(u+v+w)^2=3e+2(uv+uw+vw) \in A.
\end{equation}

 Hence $\{uv,uw,vw\}$ is a union of superclasses and is different from $\{u,v,w\}$ which is a contradiction. Therefore we see that for every nontrivial supercharacter theory of $(C_2)^3$ there exists a proper nontrivial subgroup $H$ such that $\widehat{H} \in A$.

There are only two isomorphism types for $H$, either $H\cong C_2 \times C_2$ or $H\cong C_2$. Suppose that there is at least one $H\cong C_2 \times C_2$ such that $\widehat{H} \in A$, and without loss of generality let $H=\langle x, y \rangle$. We consider the possible partitions of the complement of $H$, $\{z,xz,yz,xyz\}$. If $\{z,xz,yz,xyz\}$ is a superclass then the supercharacter theory is a wedge product of the supercharacter theory of $H$ and the unique supercharacter theory of $(C_2)^3/H\cong C_2$. Such a supercharacter theory of $(C_2)^3$ exists for every supercharacter theory of $H$, and the set of superclasses is 
\begin{equation}
\mathcal{K}_{A|_H}\cup\{\{z,xz,yz,xyz\}\}.
\end{equation}
If the partition of $\{z,xz,yz,xyz\}$ contains a singleton, then there exists a subgroup $H'\cong C_2$ with $\widehat{H'}\in A$ and $H \cap H' =\{e\}$. By Lemma \ref{refine} we have that $A$ must be the direct product of the supercharacter theories of $H$ and $H'$. The only remaining possible partition of $\{z,xz,yz,xyz\}$ is two pairs, let them be $\{t,u\}$ and $\{v,w\}$. We note that $tu=vw\in H$, without loss of generality let $tu=vw=x$. Then $A|_{H}$ is either the minimal supercharacter theory of $H$ or $\{\{e\},\{x\},\{y,z\}\}$. If it is the minimal supercharacter theory of $H$, then it is in fact a direct product supercharacter theory of $\langle t,u\rangle$ and $\langle y \rangle$, so we have already accounted for this situation. If the supercharacter theory of $H$ is $\{\{e\},\{x\},\{y,z\}\}$, then we see that every superclass disjoint from $\langle x \rangle$ is a $\langle x \rangle$-coset,  so $A$ is the wedge product of the unique supercharacter theory of $\langle x \rangle$ and the minimal supercharacter theory of $(C_2)^3/\langle x \rangle\cong C_2 \times C_2$.

We now consider the case where there does not exist an $H\cong C_2 \times C_2$ with $\widehat{H}\in A$. There must exist $H' \cong C_2$ with $\widehat{H'} \in A$. If there is a singleton superclass $\{u\}$ different from $\{e\}$ and $H' \not=\langle u \rangle$, then $\widehat{H'\times \langle u \rangle} \cong C_2\times C_2$ and $\widehat{H'\times \langle u \rangle} \in A$ which contradicts our assumption. Hence $H'$ is the only proper nontrivial subgroup which is a union of superclasses. We determine how many possibilities there are for $A$ by considering the dual supercharacter theory of $A$, which we will denote by $B$. There is exactly one proper nontrivial subgroup $H$ such that $\widehat{H} \in B$, and $H\cong C_2\times C_2$. Hence $B|_{H}$ is the maximal supercharacter theory, and by the above argument, the complement of $H$ is a superclass. Hence there is only one isomorphism type for $B$ and so there is only one isomorphism type for $A$. $A$ must be the wedge product of the supercharacter theory of $H'$ and the maximal supercharacter theory of $(C_2)^3/ H' \cong C_2\times C_2$.

This completes the proof of Theorem \ref{sufficient}, as we have considered every case.

\section{Enumeration of the Supercharacter Theories of $C_p \times C_2 \times C_2$ for $p$ odd}

We will now determine the number of supercharacter theories of $C_p\times C_2 \times C_2$ when $p$ is odd, and how many can be obtained by each of the three constructions. Working through the argument provides a method for constructing the complete set of supercharacter theories for a particular choice of $p$. 

We will let $\divisor(n)$ equal the number of positive integers that divide $n$.

\begin{lemma}\label{directauto}
Let $G$ be a finite group with a complementary pair $H$, $N$, and let $A$ be a supercharacter theory for $G$ such that $A$ is the direct product supercharacter theory of $A|_{H}$ and $A|_{N}$. Then $A$ is generated by automorphisms iff $A|_{H}$ and $A|_{N}$ are both generated by automorphisms.
\end{lemma}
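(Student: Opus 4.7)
The plan is to prove each direction of the equivalence separately, using the facts that (i) the superclasses of the direct product supercharacter theory are $\{K\times L \mid K\in \mathcal{K}_{A|_H},\, L\in \mathcal{K}_{A|_N}\}$, (ii) $\widehat{H},\widehat{N}\in A$, and (iii) a supercharacter theory of $G$ is determined by either of its two partitions (Proposition 2.2).

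For the reverse direction, suppose $A|_H$ is generated by $\Gamma_H\leq \aut(H)$ and $A|_N$ is generated by $\Gamma_N \leq \aut(N)$. Since $H$ and $N$ are a complementary pair, there is a natural injection $\aut(H)\times \aut(N)\hookrightarrow \aut(G)$ sending $(\phi,\psi)$ to the map $hn\mapsto \phi(h)\psi(n)$. The orbits of $\Gamma_H\times \Gamma_N$ acting on $G$ through this embedding are exactly the sets $K\times L$ where $K$ is a $\Gamma_H$-orbit on $H$ and $L$ is a $\Gamma_N$-orbit on $N$; by assumption these are precisely the superclasses of $A$. Hence $A$ is generated by automorphisms.

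For the forward direction, suppose $A$ is generated by a subgroup $\Gamma\leq \aut(G)$. Because $\widehat{H}\in A$, $H$ is a union of superclasses, and each $\gamma \in \Gamma$ permutes superclasses, so $\gamma(H)=H$; similarly $\gamma(N)=N$. Thus restriction defines group homomorphisms $\Gamma\to \aut(H)$ and $\Gamma\to \aut(N)$, whose images I denote $\Gamma_H$ and $\Gamma_N$. The orbits of $\Gamma_H$ on $H$ are exactly the orbits of $\Gamma$ on $G$ which are contained in $H$, that is, the superclasses of $A|_H$. Hence the supercharacter theory generated by $\Gamma_H$ has the same partition of $H$ as $A|_H$, so by Proposition 2.2 it equals $A|_H$. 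The same argument works for $N$.

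The main thing to be careful about is that in the reverse direction $\aut(G)$ can be strictly larger than $\aut(H)\times \aut(N)$ (e.g.\ diagonal automorphisms exist when $H$ and $N$ share common direct factors), so I must explicitly invoke the standard embedding of $\aut(H)\times\aut(N)$ into $\aut(G)$ afforded by the complementary pair rather than assume any identification; no appeal to the full automorphism group is needed. The forward direction is essentially bookkeeping: the only real content is that $\gamma\in \Gamma$ stabilizes $H$ and $N$ setwise, which follows immediately from $\widehat{H},\widehat{N}\in A$ together with the fact that orbits of $\Gamma$ are unions of superclasses.
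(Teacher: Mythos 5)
Your proof is correct and follows essentially the same route as the paper: the reverse direction embeds the generating subgroups of $\aut(H)$ and $\aut(N)$ into $\aut(G)$ via the complementary pair and observes that the orbits are the product superclasses, and the forward direction restricts the generating subgroup of $\aut(G)$ to $H$ and $N$ using the fact that $\widehat{H},\widehat{N}\in A$ force each automorphism to stabilize $H$ and $N$. The paper states the forward direction more tersely, but the content is the same.
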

\begin{proof}
Since $H$ and $N$ are a complementary pair, $G\cong H\times N$. Clearly if $A$ is generated by automorphisms and $K$ is a normal subgroup such that $\widehat{K}\in A$, then $A|_{K}$ is generated by automorphisms. Hence $A|_{H}$ and $A|_{N}$ are both generated by automorphisms by restriction. Conversely, suppose that there exists $H' \leq \aut(H)$ and $N' \leq \aut(N)$ such that $H'$ and $N'$ generate the supercharacter theories $A|_{H}$ and $A|_{N}$ respectively. $H'$ includes into $\aut(G)$ by acting trivially on $N\leq G$, and similarly $N'$ includes into $\aut(G)$ by acting trivially on $H \leq G$. Then the subgroup of $\aut(G)$ generated by the images of $H'$ and $N'$ generates $A$. 
\end{proof}

\begin{lemma}\label{CpC2}
If $p$ is an odd prime, then there are $3\divisor(p-1)+1$ supercharacter theories of $C_p \times C_2$. Further a supercharacter theory of $C_p \times C_2$ is generated by automorphisms iff it is a direct product supercharacter theory.
\end{lemma}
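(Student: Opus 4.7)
The plan is to apply Theorem \ref{Wielandt} to the cyclic group $G = C_p \times C_2$, whose only proper nontrivial subgroups are $C_p$ and $C_2$, and partition the nontrivial supercharacter theories according to which of $\widehat{C_p}, \widehat{C_2}$ lie in $A$. The maximal (trivial) supercharacter theory is the unique theory with neither sum in $A$, contributing the ``$+1$''. For each of the three remaining cases I will show that $A$ has a specific form (direct product, wedge with $H=C_p$, or wedge with $H=C_2$) determined uniquely by one choice of supercharacter theory of $C_p$; since Lemma \ref{fromauto} gives $\divisor(p-1)$ such choices, this accounts for $3\divisor(p-1)$ nontrivial theories.

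First I would handle the case $\widehat{C_p}, \widehat{C_2} \in A$: by Lemma \ref{refine}, $A$ refines the direct product of $A|_{C_p}$ with the unique theory of $C_2$. I will show $A$ equals the direct product by noting that if a class $K \times \{c\}$ split into $K_1 \times \{c\}$ and $K_2 \times \{c\}$, then multiplying by $c = \widehat{C_2} - e \in A$ would give $\widehat{K_i} \in A$, forcing each $K_i$ to be a union of $A|_{C_p}$-classes and contradicting $K_i \subsetneq K$. Next, for $\widehat{C_2} \in A$ but $\widehat{C_p} \notin A$, I would argue $A$ is a wedge with $H=C_2$: since $c \in A$, multiplication by $c$ preserves $A$ and permutes the superclass sums, so each superclass $K$ satisfies either $Kc = K$ or $K \cap Kc = \emptyset$; the latter ``pair'' configuration for a $K$ disjoint from $C_2$ is ruled out by expanding $\widehat{K} \regprod \widehat{K}$ in the class basis, leaving every such $K$ as a union of $C_2$-cosets. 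The case $\widehat{C_p} \in A, \widehat{C_2} \notin A$ follows by duality (Lemma \ref{dual}) and yields $\divisor(p-1)$ wedges with $H = C_p$.

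The three families are pairwise distinct by inspection of the partitions---direct products have $\{c\}$ as a singleton while splitting each $C_p$-class across the two cosets, wedges with $H=C_p$ have $cC_p$ as a single class (so $\{c\}$ is not a singleton, using $p \geq 3$), and wedges with $H=C_2$ have $\{c\}$ as a singleton but unite each $C_p$-class with its $C_2$-translate---so together with the trivial theory the count is $1 + 3\divisor(p-1)$. For the characterization, $\aut(G) \cong \aut(C_p) \times \aut(C_2) \cong \aut(C_p)$, so any subgroup of $\aut(G)$ acts trivially on $C_2$, and its orbits on $G$ form precisely the direct product of the associated $C_p$ theory with the unique $C_2$ theory; conversely Lemma \ref{directauto} together with Lemma \ref{fromauto} gives that every such direct product is generated by automorphisms. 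The main obstacle will be ruling out pair-type superclasses in the wedge cases; I expect this to succeed by combining the Hadamard identity $\widehat{K} \hadaprod \widehat{Kc} = \widehat{K \cap Kc}$ with the regular-product expansion of $\widehat{K} \regprod \widehat{K}$ and the orbit-size uniformity for $A|_{C_p}$ provided by Lemma \ref{samesize}.
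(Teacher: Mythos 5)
Your overall architecture differs from the paper's: the paper simply invokes the classification of supercharacter theories of cyclic groups (\cite[Th.\ 3.7]{cyclicSchurII}) to know that every nonmaximal theory of $C_p\times C_2\cong C_{2p}$ is a direct product, a wedge product, or generated by automorphisms, and then counts; you instead propose a self-contained derivation from Theorem \ref{Wielandt} by casing on which of $\widehat{C_p},\widehat{C_2}$ lie in $A$. The parts you actually carry out are correct: the argument that $\widehat{C_p},\widehat{C_2}\in A$ forces the direct product, the duality reduction between the two wedge cases, the pairwise distinctness of the three families, and the characterization of automorphism-generated theories (which is essentially the paper's own argument, via $\aut(C_{2p})\cong\aut(C_p)$ together with Lemmas \ref{refine}, \ref{fromauto}, and \ref{directauto}).

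The gap is in the case $\widehat{C_2}\in A$, $\widehat{C_p}\notin A$, which is the technical heart of your route and which you leave at the level of ``I expect this to succeed.'' Two issues. First, your dichotomy $Kc=K$ versus $K\cap Kc=\emptyset$ does not by itself isolate the configuration you intend to kill: a superclass $K$ contained entirely in $C_p\setminus\{e\}$ also satisfies $K\cap Kc=\emptyset$ and is certainly not a union of $C_2$-cosets; ruling it out requires the fact that the subgroup generated by a superclass is a union of superclasses (the paper's Lemma 2.1), which forces $\widehat{C_p}\in A$ --- not any expansion of $\widehat{K}\regprod\widehat{K}$. Second, for the genuinely mixed case $K=K_1\cup K_2c$ with $K_1,K_2\subseteq C_p\setminus\{e\}$ nonempty and disjoint, the tools you name do not obviously close the argument: $\widehat{K}\hadaprod\widehat{Kc}=\widehat{K\cap Kc}=0$ gives no information, and expanding $\widehat{K}\regprod\widehat{K}=(\widehat{K_1}^2+\widehat{K_2}^2)+2\widehat{K_1}\widehat{K_2}\,c$ in the superclass basis produces compatibility conditions between the $C_p$-part and the $C_pc$-part that involve \emph{all} the mixed superclasses simultaneously, not just $K$; extracting a contradiction requires substantially more bookkeeping (e.g.\ first showing that the sets $\Mset{K}{e}\cup\Mset{K}{c}$, taken one per pair $\{K,Kc\}$, partition $C_p\setminus\{e\}$ and form the classes of a deflated theory on $G/C_2$, then comparing sizes and counts on the character side). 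It works by hand for $p=3$, but as written the general case is asserted, not proved. Either complete this step in detail or do what the paper does and cite the cyclic classification, after which the count $\divisor(p-1)+2\divisor(p-1)+1$ is immediate.
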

\begin{proof}
Let $C_p \times C_2=\langle \cpgen \rangle \times \langle \kleingenone \rangle$. Since $C_p \times C_2\cong C_{2p}$ is cyclic, by \cite[Th. 3.7]{cyclicSchurII} every nonmaximal supercharacter theory can be constructed as a wedge product, a direct product, or is generated by automorphisms. Since $p$ and 2 are relatively prime, it is clear that if $A$ is generated by automorphisms, $\widehat{\langle \cpgen \rangle} \in A$ and $\widehat{\langle \kleingenone \rangle} \in A$. By Lemma \ref{refine} $A$ must be a refinement of the direct product supercharacter theory of $A|_{\langle \cpgen \rangle}$ and $A|_{\langle \kleingenone \rangle}$. Since $\kleingenone \in A$, we see that $A$ must be the direct product supercharacter theory. Conversely, if $A$ is a direct product supercharacter theory, then $\widehat{\langle \cpgen \rangle} \in A$ and $\widehat{\langle \kleingenone \rangle} \in A$. By Lemma \ref{fromauto} every supercharacter theory of $\langle \cpgen \rangle$ and $\langle \kleingenone \rangle$ is generated by automorphisms. Therefore by Lemma \ref{directauto} $A$ is generated by automorphisms.

The direct product supercharacter theories are in bijection with the supercharacter theories of $\langle \cpgen \rangle$, and by Lemma \ref{fromauto} there are $\divisor(p-1)$ of them. There are $2\divisor(p-1)$ wedge products, $\divisor(p-1)$ where $\langle \cpgen \rangle$ is the normal group and $\divisor(p-1)$ where $\langle \cpgen \rangle$ is the quotient group. The only other supercharacter theory is the maximal supercharacter theory, hence $C_p \times C_2$ has $3\divisor(p-1)+1$ supercharacter theories.
\end{proof}

\begin{lemma}\label{wedgeinclusion}
\cite[Lemma 8.1]{Hendconstruction}
Let $N$ and $H$ be proper nontrivial normal subgroups of $G$. If a supercharacter theory of $G$ can be constructed as a wedge product of supercharacter theories for $N$ and $G/N$ and also as a wedge product of supercharacter theories of $H$ and $G/H$ then $N\leq H$ or $H \leq N$.
\end{lemma}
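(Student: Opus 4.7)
The plan is to unpack the description of the superclass structure of a wedge product and then derive the inclusion directly from picking a witness element. Recall that if $A$ is the wedge product with respect to a normal subgroup $N$, then its superclasses are of two kinds: those entirely contained in $N$ (coming from $\mathcal{K}_N$), and those of the form $\pi^{-1}(K)$ for $K \in \mathcal{K}_{G/N}\setminus\{\{e\}\}$, which are unions of nontrivial cosets of $N$ and therefore disjoint from $N$. Consequently, every superclass $L$ of $A$ either satisfies $L \subseteq N$, or $L \cap N = \emptyset$ and $L$ is a union of $N$-cosets.

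With this observation in hand, I would argue by contrapositive. Assume that $N \not\leq H$, and show that this already forces $H \leq N$; by symmetry this is enough. Since $N \not\leq H$, pick some $n \in N \setminus H$, and consider its superclass $L = \superclass{n}_{A}$. Using that $A$ is a wedge with respect to $N$ and that $n \in N$, the first kind of superclass applies, giving $L \subseteq N$.

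Next I would use the wedge-with-respect-to-$H$ hypothesis. Since $n \notin H$, the superclass $L$ falls into the second kind for the wedge over $H$, so $L$ is a union of $H$-cosets; in particular $nH \subseteq L$. Combining with $L \subseteq N$ gives $nH \subseteq N$. But $n \in N$, so multiplying by $n^{-1}$ on the left (which preserves $N$) yields $H \subseteq N$, as desired. This completes the proof.

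There is no serious obstacle here: the entire content of the lemma is the structural characterization of the superclasses of a wedge product, and the only real step is noticing that a single superclass cannot simultaneously sit inside $N$ while being a union of $H$-cosets unless $H \subseteq N$. I would only need to be careful to cite or briefly re-verify the dichotomy $L \subseteq N$ or $L \cap N = \emptyset$ directly from the superclass list given in the wedge product proposition earlier in the paper.
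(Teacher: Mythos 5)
Your proof is correct. The paper does not actually prove this lemma itself---it is quoted from Hendrickson's work (cited as Lemma 8.1 of that reference)---so there is no in-paper argument to compare against, but your reasoning is the natural and complete one: the superclass list of a wedge product over $N$ gives exactly the dichotomy that each superclass is either contained in $N$ or is a union of nontrivial $N$-cosets disjoint from $N$, and applying both dichotomies to the superclass of a witness $n\in N\setminus H$ forces $nH\subseteq N$ and hence $H\leq N$. The only cosmetic remark is that your appeal to ``symmetry'' is unnecessary: showing that $N\not\leq H$ implies $H\leq N$ is already the full disjunction.
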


\begin{theorem}
Let $p$ be an odd prime and let $p-1=2^k3^{\ell}n$ where $n$ is not divisible by 2 or 3. Then  the number of distinct supercharacter theories of $C_p\times C_2 \times C_2$ is
\begin{equation}
3k\divisor(3^{\ell}n)+2\ell \divisor(2^k n) + 30\divisor(p-1)+13.
\end{equation}
Further, we can enumerate the supercharacter theories by method of construction. There are $3k\divisor(3^{\ell}n)+2\ell \divisor(2^k n)+5\divisor(p-1)$ supercharacter theories which can be generated by automorphisms, and $11\divisor(p-1)+6$ supercharacter theories which are direct products, including $5\divisor(p-1)$ which can also be generated by automorphisms. If a supercharacter theory is a wedge product, then it is not a direct product or generated by automorphisms. There are $19\divisor(p-1)+6$ supercharacter theories which are wedge products, and the only remaining supercharacter theory is the maximal supercharacter theory.
\end{theorem}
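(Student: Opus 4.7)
The plan is to partition the nontrivial supercharacter theories of $G = C_p \times C_2 \times C_2$ by their \emph{subgroup configuration} (the set of proper nontrivial $H \leq G$ with $\widehat{H} \in A$) and enumerate case by case. Every such SCT falls into one of the eleven configurations from the list in Section 3 (those not containing a complementary pair), a configuration containing a complementary pair, or is the maximal SCT. For the eleven non-complementary configurations, the analyses in Cases 1, 2a, and 2b already yield the counts $3$, $3\divisor(p-1)$, $\divisor(p-1)$, $3\divisor(p-1)$, $\divisor(p-1)$, $3\divisor(p-1)$ for configurations \#6, \#9, \#5, \#7, \#8, \#10 respectively. By Lemma \ref{dual} the dual configurations \#1, \#2, \#3, \#4, \#11 have equal counts (with \#9 self-dual), yielding $19\divisor(p-1) + 6$ wedge products total. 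I would verify that none of these SCTs is a direct product (no complementary pair is among the union subgroups) or automorphism-generated (the simultaneous presence of $\widehat{C_p \times C_2}$ and $\widehat{C_p}$ in $A$ would force an automorphism orbit mixing order-$p$ and order-$2p$ elements, which is impossible).

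Next I would enumerate the direct-product SCTs. Type I direct products via the pair $(C_p, C_2 \times C_2)$ contribute $5\divisor(p-1)$ SCTs; by Lemma \ref{directauto}, Lemma \ref{fromauto}, and the observation that all five SCTs of $C_2 \times C_2$ are generated by subgroups of $\aut(C_2 \times C_2) \cong S_3$, every Type I SCT is automorphism-generated. Type II direct products via some pair $(C_p \times \langle x\rangle, \langle y\rangle)$ require careful counting since different pairs can realize the same SCT. Classifying $A|_{C_p \times C_2}$ via Lemma \ref{CpC2}, the maximal restriction produces $6$ SCTs (each realized by a unique pair), the wedge-(a) restrictions give $3\divisor(p-1)$ SCTs (each realized by $2$ pairs), the wedge-(b) restrictions give $3\divisor(p-1)$ SCTs (each realized by $2$ pairs), and the direct-product restrictions give $\divisor(p-1)$ SCTs (each realized by $6$ pairs) which coincide with those Type I SCTs whose $A|_{C_2 \times C_2}$ is minimal. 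Subtracting this overlap yields $5\divisor(p-1) + 7\divisor(p-1) + 6 - \divisor(p-1) = 11\divisor(p-1) + 6$ direct products. The Type II SCTs outside Type I are not automorphism-generated, as their restriction $A|_{C_p \times C_2}$ is not a direct product by Lemma \ref{CpC2}.

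The remaining SCTs containing a complementary pair are the non-direct-product SCTs from Cases 3 and 4, which are automorphism-generated as shown in those sections. They are parameterized by refinement pairs $(B, B')$ of SCTs of $C_p$ where $B'$ refines $B$ by splitting each non-identity superclass into $2$ (Case 3) or $3$ (Case 4) equal pieces. By Lemma \ref{samesize} and Remark \ref{autormrk}, such refinements are specified by the superclass size $m$ of $B'$, with $B$-superclass size $2m \mid p-1$ or $3m \mid p-1$. Writing $p-1 = 2^k 3^\ell n$, these conditions give $\divisor((p-1)/2) = k\divisor(3^\ell n)$ refinement pairs in Case 3 and $\divisor((p-1)/3) = \ell\divisor(2^k n)$ in Case 4; multiplying by the $3$-fold choice of singleton $C_2$ in Case 3 and the $2$-fold permutation choice in Case 4 gives $3k\divisor(3^\ell n) + 2\ell\divisor(2^k n)$ SCTs. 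These are not wedge products, as their superclass structure in $G \setminus C_p$ does not arise from preimages of cosets under any quotient map $G \to G/N$ with $\widehat{N} \in A$.

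Summing all contributions (wedge, direct, non-direct automorphism-generated, and the maximal SCT) gives
\begin{equation}
(19\divisor(p-1) + 6) + (11\divisor(p-1) + 6) + (3k\divisor(3^\ell n) + 2\ell\divisor(2^k n)) + 1,
\end{equation}
which equals $3k\divisor(3^\ell n) + 2\ell\divisor(2^k n) + 30\divisor(p-1) + 13$. The main obstacle will be the Type II direct-product enumeration: distinct complementary pairs can realize the same SCT, and the multiplicity depends on whether $\mathcal{K}_{A|_{C_p \times C_2}}$ is invariant under right multiplication by a generator of the complementary $C_2$, forcing the four-sub-case analysis via Lemma \ref{CpC2} with multiplicities $1, 2, 2, 6$ to obtain $11\divisor(p-1) + 6$ without double-counting.
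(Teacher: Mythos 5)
Your counts are all correct and agree with the paper's intermediate totals ($19\divisor(p-1)+6$ wedges, $11\divisor(p-1)+6$ direct products of which $5\divisor(p-1)$ are also automorphism-generated, $3k\divisor(3^{\ell}n)+2\ell \divisor(2^k n)$ remaining automorphism-generated theories, plus the maximal theory), but two of the three counts are organized genuinely differently. For the wedge products the paper enumerates by the normal subgroup $N$ (taking $N\cong C_p$, $C_2$, $C_2\times C_2$, $C_p\times C_2$ in turn) and uses Lemma \ref{wedgeinclusion} to avoid double counting; you enumerate by subgroup configuration and transfer five of the eleven counts across the duality of Lemma \ref{dual}. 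Your route gives a cleaner ``counted exactly once'' guarantee, since configurations partition the theories, but it needs the converse direction of Lemma \ref{dual} (that the dual theory's configuration is exactly the dual configuration, which follows from double duality) — worth a sentence. For the automorphism-generated theories the paper applies Goursat's Lemma to $\aut(G)\cong C_{p-1}\times S_3$; you instead read the count off the Case 3/4 parameterization by refinement pairs $(B,B')$, which is arguably more transparent but silently uses the rigidity established at the end of Cases 3 and 4 (each parameter choice yields exactly one theory, and the two orientations in Case 4 are distinct). The direct-product count, including the multiplicities $1,2,2,6$ for the pairs realizing a given theory, is essentially the paper's argument.

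The one step I would not accept as written is your parenthetical reason that no theory in the eleven non-complementary configurations is automorphism-generated: the claim that ``the simultaneous presence of $\widehat{C_p\times C_2}$ and $\widehat{C_p}$ would force an orbit mixing order-$p$ and order-$2p$ elements'' only addresses configurations containing both of those subgroups (numbers 7, 8, and 10 of the list) and says nothing about, for example, configuration 1, where only a single $C_2$ is a union of superclasses. The uniform argument, which the paper uses implicitly inside Lemma \ref{CpC2} and in its wedge-product paragraph, is: automorphisms preserve element order, so for any automorphism-generated theory the set of elements of order dividing $p$ and the set of elements of order dividing $2$ — namely $C_p$ and $C_2\times C_2$ — are each unions of orbits; these form a complementary pair, so no automorphism-generated theory lies in any of the eleven configurations. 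Replacing your parenthetical with that observation closes the argument.
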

\begin{proof}
By Theorem \ref{sufficient}, every nonmaximal supercharacter theory of $C_p\times C_2 \times C_2$ is generated by automorphisms, a direct product, or a wedge product. We begin by considering all of the supercharacter theories generated by automorphisms. Every subgroup of $\aut(C_p\times C_2 \times C_2)$ generates a supercharacter theory of $C_p\times C_2 \times C_2$, although they are not all distinct. Also note that the minimal supercharacter theory is generated by the trivial subgroup of $\aut(C_p\times C_2 \times C_2)$. Since $\aut(C_p)\cong C_{p-1}$, $\aut(C_2\times C_2)\cong S_3$, and $p$ is odd we have that
\begin{equation}
\aut(C_p\times C_2 \times C_2)\cong \aut(C_p)\times \aut(C_2\times C_2) \cong C_{p-1} \times S_3.
\end{equation}
We will now consider all possible subgroups of $\aut(C_p) \times \aut(C_2\times C_2)$. Let $\aut(C_p)=\langle \psi \rangle$, let $A_3$ be the order three subgroup of $\aut(C_2\times C_2)$ and for $x \in \{\kleingenone,\kleingentwo,\kleingenthree\}$ let $H_x$ be the order two subgroup of $\aut(C_2\times C_2)$ that fixes $x$. It follows from Goursat's Lemma that every subgroup of $G_1\times G_2$ can be expressed as
\begin{equation}\label{Goursat}
R=\{(x,y)\in H_1\times H_2|\phi(xN_1)=yN_2\}
\end{equation}
for a unique choice of $N_1 \trianglelefteq H_1 \leq G_1$, $N_2 \trianglelefteq H_2 \leq G_2$ and an isomorphism $\phi:H_1/N_1 \rightarrow H_2/N_2$, and every such choice gives a subgroup.
Let $N_1 \trianglelefteq H_1 \leq \aut(C_p)$ and $N_2 \trianglelefteq H_2 \leq \aut(C_2\times C_2)$. We have $\aut(C_p)\cong C_{2^k}\times C_{3^\ell}\times C_n$ where $n$ is not divisible by 2 or 3.

$H_2/N_2$ is isomorphic to the trivial group, $C_2$, or $C_3$. Case I: If $H_2/N_2 \cong \{e\}$, then $H_1=N_1$ and $H_2=N_2$. There is only one choice for $\phi$, so it is easy to see that in this case the subgroup $R$ is the direct product $H_1\times H_2$. Every choice of $H_1$ gives a distinct supercharacter theory of $\langle \cpgen \rangle$, so there are $\divisor(p-1)$ choices for $H_1$. However, $\aut(C_2\times C_2)$ and $A_3 \leq \aut(C_2\times C_2)$ both generate the maximal supercharacter theory of $C_2\times C_2$. We then observe that there are five choices for $H_2$. Hence there are $5\divisor(p-1)$ possibilities. 

Case II: There are four ways for $H_2/N_2 \cong C_2$: $H_2=\aut(C_2\times C_2)$ and $N_2=A_3$, or $H_2$ is $H_\kleingenone$, $H_\kleingentwo$, or $H_\kleingenthree$ and $N_2=\{e\}$.  We have $H_1=\langle \psi^m \rangle$ and $N_1=\langle \psi^{2m} \rangle$ where $m|p-1$ and $(p-1)/m$ is even. There is only one choice for $\phi$. If $H_2=\aut(C_2\times C_2)$ and $N_2=A_3$ then $(id,\sigma)$ and $(id,\sigma^2)$ are elements of the subgroup $R$ where $\sigma(\kleingenone)=\kleingentwo$, $\sigma(\kleingentwo)=\kleingenthree$, $\sigma(\kleingenthree)=\kleingenone$. We see that the supercharacter theory generated by this subgroup $R$ is the same one generated by the subgroup for $H'_1=N'_1=H_1$, $H'_2=N'_2=A_3$, which is $\langle \psi^m \rangle \times \langle \sigma \rangle$. Hence this supercharacter theory has already been constructed above. 

So we will only consider $H_2=H_x$ and $N_2 =\{id\}$. There is only one choice for $\phi$ so the resulting subgroup $R$ is $\langle \psi^m \circ \tau \rangle\cong C_{(p-1)/m}$, where $H_x=\langle \tau \rangle$. There are three choices for $H_2$, $k\divisor(3^{\ell}n)$ choices for $H_1$, and one choice for $N_1$, which gives $3k\divisor(3^{\ell}n)$ new contributions. Note that supercharacter theories of this form correspond to the supercharacter theories in Case 3 of Theorem \ref{sufficient} which are not direct products. See also Example \ref{exwithM}.

Case III: When $H_2/N_2\cong C_3$, we must have $H_2=A_3$ and $N_2=\{e\}$. Then $H_1 =\langle \psi ^m \rangle$, $N_1=\langle \psi^{3m} \rangle$ where $m|(p-1)$ and 3 divides $(p-1)/m$. There are $\ell\divisor(2^k n)$ possibilities for $H_1$, a unique $N_1$ and two choices for $\phi$ which yields $2\ell \divisor(2^k n)$ subgroups $R=\langle \psi^m \circ \sigma_\phi \rangle \cong C_{(p-1)/m}$ where $\phi(\psi^{m}N_1)=\sigma_\phi$. Note that supercharacter theories of this form correspond to the supercharacter theories in Case 4 of Theorem \ref{sufficient} which are not direct products. See Examples \ref{case41}, \ref{case42}. We conclude that there are $3k\divisor(3^{\ell}n)+2\ell \divisor(2^k n)+5\divisor(p-1)$ supercharacter theories generated by automorphisms.

We now consider the supercharacter theories which are direct products. By Lemma \ref{directauto}, we see that some direct product supercharacter theories of $C_p\times C_2 \times C_2$ can be generated by automorphisms. In particular, by Lemma \ref{fromauto} every supercharacter theory of $C_p$ is generated by automorphisms, and it is easy to see that every supercharacter theory of $C_2 \times C_2$ is also generated by automorphisms. Hence we have already constructed the $5\divisor(p-1)$ direct product supercharacter theories of $C_p\times C_2 \times C_2$ with the complementary pair $C_p$, $C_2 \times C_2$. The other complementary pair to be considered is $C_p \times C_2$, $C_2$. Let $x,y$ be distinct elements of $\{\kleingenone, \kleingentwo, \kleingenthree\}$ and let $C_p \times C_2 = \langle \cpgen, x \rangle$, and $C_2=\langle y \rangle$. Note that if a supercharacter theory $A$ can be expressed as the direct product for the complementry pair $ \langle \cpgen, x \rangle$, $\langle y \rangle$ then it is also a direct product for the complementary pair $\langle \cpgen \rangle$, $\langle x,y \rangle$ iff $A|_{\langle \cpgen, x \rangle}$ is a direct product supercharacter theory. Therefore we want $A|_{\langle \cpgen, x \rangle}$ to not be a direct product. Hence we need to count the supercharacter theories $A$ which are direct products of $A|_{\langle \cpgen, x \rangle}$ and $A|_{\langle y \rangle}$ such that $A|_{\langle \cpgen, x \rangle}$ must be a wedge product with $\langle \cpgen \rangle$ normal, a wedge product with $\langle x \rangle$ normal, or the maximal supercharacter theory. There are six choices for the pair $x,y$. In the case of $\langle \cpgen \rangle$ normal there are $\divisor(p-1)$ choices for the supercharacter theory $A|_{\langle \cpgen \rangle}$, and given $y$ either choice of $x$ yields the same supercharacter theory so there are $3\divisor(p-1)$ possibilities. Similarly, for $\langle x \rangle$ normal there are $\divisor(p-1)$ supercharacter theories for the quotient, and either choice of $y$ yields the same supercharacter theory so there are $3\divisor(p-1)$ possibilities. For the maximal case, all six choices of $x,y$ give distinct supercharacter theories. Hence there are $6\divisor(p-1)+6$ direct product supercharacter theories of $C_p\times C_2 \times C_2$ which cannot be generated by automorphisms.

We now consider the wedge products. First note that if $A$ is a wedge product, there does not exist a complementary pair of subgroups such that both are unions of superclasses of $A$. Hence $A$ cannot be either a direct product supercharacter theory or generated by automorphisms. If $A$ is a wedge product of the supercharacter theories of a normal subgroup $N$ and $G/N$, then $N$ is isomorphic to one of  $C_2$, $C_2\times C_2$, $C_p$, or $C_2 \times C_p$. We will avoid constructing a supercharacter theory more than once by using Lemma \ref{wedgeinclusion}. We begin with all wedge products with $N\cong C_p$. There is only one subgroup isomorphic to $C_p$, so $N = \langle \cpgen \rangle$. There are $\divisor(p-1)$ supercharacter theories of $N$, and five supercharacter theories of the quotient, hence there are $5\divisor(p-1)$ such supercharacter theories.

Now we consider all wedge products with $N\cong C_2$. There are three possibilities for $N$: $\langle \kleingenone \rangle$, $\langle \kleingentwo \rangle$, and $\langle \kleingenthree \rangle$. $N$ has only 1 supercharacter theory, and by Lemma \ref{CpC2} the quotient has $3\divisor(p-1)+1$ supercharacter theories. Hence there are $3(3\divisor(p-1)+1)$ wedge products with $N\cong C_2$ normal.

We now want to count the wedge products with $N\cong C_2 \times C_2$ which are not also wedge products of supercharacter theories of $N'\cong C_2$ and $G/N'$. We have $N=\{e,\kleingenone,\kleingentwo,\kleingenthree\}$. It is easy to check that a supercharacter theory $A$ of $C_p \times C_2 \times C_2$ is a wedge product for both $N$ and $N'$ iff the dimension of $A|_{N}$ is three. Hence $A|_{N}$ must be either the minimal supercharacter theory, or the maximal supercharacter theory. By Lemma \ref{fromauto} there are $\divisor(p-1)$ supercharacter theories for the quotient, hence we have $2\divisor(p-1)$ supercharacter theories.

Finally, we count the wedge products where $N\cong C_p\times C_2$ which are not wedge products for $N'\cong C_p$ or $N'' \cong C_2$. If $A$ is a wedge product for $C_p$ or $C_2$ then $A|_N$ is a wedge product for $C_p$ or $C_2$ respectively. Hence $A|_N$ must be either a direct product supercharacter theory or the maximal supercharacter theory, so there are $\divisor(p-1)+1$ choices for $A|_{N}$. There is only one choice for the supercharacter theory of the quotient, and three choices for $N$: $\langle \cpgen, \kleingenone \rangle$, $\langle \cpgen, \kleingentwo \rangle$, or $\langle \cpgen, \kleingenthree \rangle$. Hence there are $3(\divisor(p-1)+1)$ such supercharacter theories.

We conclude that there are $19\divisor(p-1) +6$ wedge products. Adding to this the supercharacter theories generated by automorphisms, the direct product supercharacter theories which are not generated by automorphisms, and the maximal supercharacter theory we see that there are $3k\divisor(3^{\ell}n)+2\ell \divisor(2^k n) + 30\divisor(p-1)+13$ supercharacter theories for $C_p \times C_2 \times C_2$.

\end{proof}

\section{Examples}

\begin{example}\label{exwithM}
Let $p=5$. Let $\phi,\psi$ be automorphisms of $C_5\times C_2\times C_2$ defined by $\phi(\cpgen)=\cpgen^2$, $\phi(\kleingenone)=\kleingenone$, $\phi(\kleingentwo)=\kleingentwo$, $\phi(\kleingenthree)=\kleingenthree$, and $\psi(\cpgen)=\cpgen$, $\psi(\kleingenone)=\kleingenone$, $\psi(\kleingentwo)=\kleingenthree$, $\psi(\kleingenthree)=\kleingentwo$. Then the supercharacter theory generated by the group $\langle \phi \circ \psi \rangle$ has the following superclasses:
\begin{equation}
\{e\}, \{\cpgen,\cpgen^2,\cpgen^3,\cpgen^4 \},
\end{equation}
\begin{equation}
\{\kleingenone\},\{\kleingentwo,\kleingenthree\},
\end{equation}
\begin{equation}
\{\cpgen\kleingenone,\cpgen^2 \kleingenone,\cpgen^3 \kleingenone, \cpgen^4 \kleingenone\},
\end{equation}
\begin{equation}
\{\cpgen \kleingentwo, \cpgen^4 \kleingentwo, \cpgen^2 \kleingenthree, \cpgen^3 \kleingenthree\}, \{\cpgen^2 \kleingentwo, \cpgen^3 \kleingentwo, \cpgen \kleingenthree, \cpgen^4 \kleingenthree \}.
\end{equation}
For $K=\superclass{\cpgen \kleingentwo}$ we have $\Mset{K}{\kleingentwo}=\{\cpgen,\cpgen^4\}$ and $\Mset{K}{\kleingenthree}=\{\cpgen^2,\cpgen^3\}$. We see that $\{e\}, \{\cpgen,\cpgen^4\}, \{\cpgen^2,\cpgen^3\}$ are the superclasses of the supercharacter theory of $\langle \cpgen \rangle$ generated by the group $\langle \phi^2|_{\langle \cpgen \rangle} \rangle$ and that $\phi^2(\cpgen)=\cpgen^{-1}$.
\end{example}

\begin{example}\label{case41}
Let $p=7$. $\{ \{e\},\{\cpgen,\cpgen^2,\cpgen^4\},\{\cpgen^3,\cpgen^5,\cpgen^6\}\}$ is the set of superclasses for the supercharacter theory of $C_7$ generated by $\langle\phi_0 \rangle$ where $\phi_0(\cpgen)=\cpgen^2$. Let $\phi,\psi \in \aut(C_7\times C_2\times C_2)$ be defined by $\phi(\cpgen)=\cpgen^2$, $\phi(\kleingenone)=\kleingenone$, and $\phi(\kleingentwo)=\kleingentwo$, $\phi(\kleingenthree)=\kleingenthree$, and $\psi(\cpgen)=\cpgen$, $\psi(\kleingenone)=\kleingentwo$, $\psi(\kleingentwo)=\kleingenthree$, $\psi(\kleingenthree)=\kleingenone$. Then $\langle \phi\circ \psi \rangle$ generates the supercharacter theory with superclasses:
\begin{equation}
\{e\}, \{\cpgen,\cpgen^2,\cpgen^4\},\{\cpgen^3,\cpgen^5,\cpgen^6\},
\end{equation}
\begin{equation}
\{\kleingenone,\kleingentwo,\kleingenthree\},
\end{equation}
\begin{equation}
\{ \cpgen \kleingenone,\cpgen^2 \kleingentwo, \cpgen^4 \kleingenthree\}, \{\cpgen^2 \kleingenone, \cpgen^4 \kleingentwo, \cpgen \kleingenthree\}, \{ \cpgen^6 \kleingenone, \cpgen^5 \kleingentwo, \cpgen^3 \kleingenthree\},
\end{equation}
\begin{equation}
\{\cpgen^5 \kleingenone, \cpgen^3 \kleingentwo, \cpgen^6 \kleingenthree\}, \{ \cpgen^3 \kleingenone, \cpgen^6 \kleingentwo, \cpgen^5 \kleingenthree\}, \{\cpgen^4 \kleingenone, \cpgen \kleingentwo, \cpgen^2 \kleingenthree\}.
\end{equation}

Note that the supercharacter theory generated by $\langle \phi\circ \psi^{-1} \rangle$ is
\begin{equation}
\{e\}, \{\cpgen,\cpgen^2,\cpgen^4\},\{\cpgen^3,\cpgen^5,\cpgen^6\},
\end{equation}
\begin{equation}
\{\kleingenone,\kleingentwo,\kleingenthree\},
\end{equation}
\begin{equation}
\{ \cpgen \kleingenone, \cpgen^2 \kleingenthree,\cpgen^4 \kleingentwo\}, \{\cpgen^2 \kleingenone, \cpgen^4 \kleingenthree, \cpgen \kleingentwo\}, \{ \cpgen^6 \kleingenone, \cpgen^5 \kleingenthree, \cpgen^3 \kleingentwo\},
\end{equation}
\begin{equation}
\{\cpgen^5 \kleingenone, \cpgen^3 \kleingenthree, \cpgen^6 \kleingentwo\}, \{ \cpgen^3 \kleingenone, \cpgen^6 \kleingenthree, \cpgen^5 \kleingentwo\}, \{\cpgen^4 \kleingenone, \cpgen \kleingenthree, \cpgen^2 \kleingentwo\}.
\end{equation}
These supercharacter theories differ according to the 2 possibilities described in Equations (\ref{Achoice1}) and (\ref{Achoice2}), and also the different choices for the isomorphism in Equation (\ref{Goursat}).

\end{example}
\begin{example}\label{case42}
Let $\psi$ be defined as above and let $\sigma \in \aut(C_7\times C_2\times C_2)$ be defined by $\sigma(\cpgen)=\cpgen^5$, $\sigma(\kleingenone)=\kleingenone$, $\sigma(\kleingentwo)=\kleingentwo$, and $\sigma(\kleingenthree)=\kleingenthree$. Then $\langle \sigma \circ \psi \rangle$ generates the following supercharacter theory:
\begin{equation}
\{e\}, \{\cpgen,\cpgen^2,\cpgen^3,\cpgen^4,\cpgen^5,\cpgen^6\},
\end{equation}
\begin{equation}
\{\kleingenone,\kleingentwo,\kleingenthree\},
\end{equation}
\begin{equation}
\{ \cpgen \kleingenone, \cpgen^6 \kleingenone, \cpgen^2 \kleingentwo, \cpgen^5 \kleingentwo, \cpgen^3 \kleingenthree, \cpgen^4 \kleingenthree\},
\end{equation}
\begin{equation}
\{ \cpgen^2 \kleingenone, \cpgen^5 \kleingenone, \cpgen^3 \kleingentwo, \cpgen^4 \kleingentwo, \cpgen \kleingenthree, \cpgen^6 \kleingenthree\},
\end{equation}
\begin{equation}
\{ \cpgen^3 \kleingenone, \cpgen^4 \kleingenone, \cpgen \kleingentwo,\cpgen^6 \kleingentwo, \cpgen^2 \kleingenthree, \cpgen^5 \kleingenthree\}.
\end{equation}
\end{example}

\bibliographystyle{alpha}
\bibliography{supercharref}

\textsc{Department of Mathematics, University of California, Davis, One Shields Avenue, Davis, CA} 95616-8633

\textit{E-mail address}:\texttt{ langa@math.ucdavis.edu}

\end{document}